\newtheorem{remark}[theorem]{Remark}
\begin{document}

\title{Control semiflows, chain controllability, and the Selgrade decomposition for
linear delay systems}
\author{Fritz Colonius\\Institut f\"{u}r Mathematik, Universit\"{a}t Augsburg, Augsburg, Germany\\fritz.colonius@uni-a.de\\orcid.org/0000-0003-3853-6065}
\maketitle

\textbf{Abstract: }A continuous semiflow is introduced for linear control
systems with delays in the states and controls and bounded control range. The
state includes the control functions. It is proved that there exists a unique
chain control set which corresponds to the chain recurrent set of the
semiflow. The semiflow can be lifted to a linear semiflow on an infinite
dimensional vector bundle with chain transitive base flow. A decomposition
into exponentially separated subbundles is provided by a recent generalization
of Selgrade's theorem.

\textbf{Key words. }delay control system, chain controllability, chain
transitivity, Selgrade decomposition, Poincar\'{e} sphere

\textbf{AMS subject classification.} 93B05, 93C23, 37B20, 34K35

\section{Introduction\label{Section1}}

We will associate a control semiflow to linear systems with delays in the
states and controls and study their generalized controllability properties.
The considered systems are controlled (retarded) differential delay equations
of the form%
\begin{align}
\dot{x}(t)  &  =A_{0}x(t)+\sum_{i=1}^{p}A_{i}x(t-h_{i})+B_{0}u(t)+\sum
_{i=1}^{p}B_{i}u(t-h_{i}),\,u\in\mathcal{U},\label{D1}\\
x(0)  &  =r,\,x(s)=f(s)\text{ for almost all }-h\leq s\leq0\text{ and
}u(t)=0\text{ for }t\leq0.\nonumber
\end{align}
Here, $A_{0},\ldots,A_{p}\in\mathbb{R}^{n\times n},B_{0},\ldots,B_{p}%
\in\mathbb{R}^{n\times m},0=:h_{0}<h_{1}<\cdots<h_{p}=:h,r\in\mathbb{R}%
^{n},\,f\in L^{2}([-h,0],\mathbb{R}^{n})$, and the set $\mathcal{U}$ of
admissible control functions is given by%
\[
\mathcal{U}:=\{u\in L^{\infty}(\mathbb{R},\mathbb{R}^{m})\left\vert
u(t)\in\Omega\text{ for almost all }t\in\mathbb{R}\right.  \},
\]
for a nonvoid compact and convex set $\Omega\subset\mathbb{R}^{m}$. The unique
solutions $x(t)=\psi(t,r,f,u)$ are absolutely continuous on every interval
$[0,T],T>0$.

A classical topic in control theory is approximate controllability for these
systems, where the states at time $t\geq0$ are $(x(t),x_{t})$ with
$x_{t}(s):=x(t+s),s\in\lbrack-h,0]$ in the state space,%
\[
M_{2}:=M_{2}([-h,0],\mathbb{R}^{n})=\mathbb{R}^{n}\times L^{2}%
([-h,0],\mathbb{R}^{n});
\]
cf. Manitius \cite{Man81}, Curtain and Zwart \cite{CurtZ}, Bensoussan, Da
Prato, Delfour, and Mitter \cite[Chapter 4]{BenDDM}. The problem is to
determine when for given initial state $(x(0),x_{0})=(r,f)\in M_{2}$ it
follows that the reachable set,%
\[
\{(x(t),x_{t})=(\psi(t,r,f,u),\psi_{t}(\cdot,r,f,u))\in M_{2}\left\vert
\text{\ }t\geq0\text{ and }u\in\mathcal{U}\right.  \}
\]
is dense in $M_{2}$. Recently it has found renewed interest by the
contribution of Hinrichsen and Oeljeklaus \cite{HinOel22}. They show (for
systems without control constraints and without delays in the controls) that
robustness of approximate controllability with respect to perturbations
requires the assumption that $\mathrm{rank}(B_{0},A_{p}B_{0},\ldots
A_{p}^{n-1}B_{0})=n$. We will analyze subsets of the state space where chain
controllability holds, which is a weaker version of approximate
controllability in infinite time (cf. Definition \ref{Definition_chain}) and
sometimes may be difficult to distinguish from it in numerical computations.
It allows for small jumps in the trajectories and hence it is not a physical
notion. In the theory of dynamical systems analogous constructions have been
quite successful in order to describe the limit behavior as time tends to
infinity for complicated flows; cf., e.g., Robinson \cite{Robin98}, Alongi and
Nelson \cite{AlonN07}. For finite dimensional control systems the monographs
Colonius and Kliemann \cite{ColK00} and Kawan \cite{Kawan13} contain basic
results on chain controllability; cf. also Da Silva and Kawan \cite{DSilK15},
Ayala, Da Silva, and San Martin \cite{AyaDSSM17} and Da Silva \cite{DaSilva23}.

For control system (\ref{D1}), we construct a continuous affine semiflow
$\Phi$ on the infinite dimensional vector bundle $\mathcal{U}\times M_{2}$ in
the form%
\[
\Phi_{t}(u,r,f)=((u(t+\cdot),(x(t),x_{t}))\text{ for }t\geq0,u\in
\mathcal{U},(r,f)\in M_{2}.
\]
Here, $u(t+\cdot)(s):=u(t+s),s\in\mathbb{R}$ is the right shift and
$\mathcal{U}$ is endowed with a metric compatible with the weak$^{\ast}$
topology of $L^{\infty}(\mathbb{R},\mathbb{R}^{m})$. This generalizes control
flows for finite dimensional systems, cf. \cite{ColK00} and \cite{Kawan13}.

General background on skew product flows (with an emphasis on finite
dimensional systems) is provided by Kloeden and Rasmussen \cite{KloedeR}. For
infinite dimensional systems see Hale \cite{Hale}, Sell and You \cite{SellY02}%
. Some results on chain recurrence for infinite dimensional linear dynamics in
discrete time are presented in Antunez, Mantovani, and Var\~{a}o
\cite{AntMV22}.

The main results of the present paper are the following. Under an injectivity
assumption, Theorem \ref{Theorem_continuous} shows that $\Phi$ is a continuous
semiflow on $\mathcal{U}\times M_{2}$. By Theorem \ref{Theorem_ccs1} there
exists a unique maximal subset of chain controllability, i.e., a chain control
set $E$, in $M_{2}$. The chain control set corresponds to the unique maximal
chain transitive subset of the semiflow $\Phi$; cf. Theorem
\ref{Theorem_equivalence}. The affine semiflow $\Phi$ can be lifted to a
linear semiflow $\Phi^{1}$ on the extended space $\mathcal{U}\times
M_{2}\times\mathbb{R}$. By a theorem due to Blumenthal and Latushkin
\cite{BluL19} the semiflow $\Phi^{1}$ admits a Selgrade decomposition into
exponentially separated subbundles and, equivalently, a Morse decomposition of
the induced flow on the projective bundle $\mathcal{U}\times\mathbb{P}%
(M_{2}\times\mathbb{R})$. This construction is related to the Poincar\'{e}
sphere in the theory of nonlinear differential equations; cf. Remark
\ref{Remark_sphere}. Finally, the special situation is analyzed, where the
linear part of the control system is uniformly hyperbolic. This partially
generalizes pertinent results of Colonius and Santana \cite{ColS24} and
Colonius, Santana, and Viscovini \cite{ColSanV23} in finite dimensions. Kawan
\cite{Kawan17} presents a short review of uniformly hyperbolic finite
dimensional control systems.

Concerning the construction of the control semiflow it is worth to mention
that, for finite dimensional control systems, Desheng Li \cite{Li07} developed
an alternative approach based on differential inclusions, hence avoiding the
explicit use of the space $\mathcal{U}$ of control functions. Here continuity
of the trajectories with respect to the topology on $\mathcal{U}$ plays no role.

The contents of this paper are as follows: Section \ref{Section2} introduces
notation for linear semiflows on infinite dimensional vector bundles and cites
a result by Blumenthal and Latushkin on generalized Selgrade decompositions
for these systems. In Section \ref{Section3} properties of delay equations and
their state space description in $M_{2}$ are recalled. Section \ref{Section4}
constructs the control semiflow for injective delay control systems. Section
\ref{Section5} characterizes chain controllable sets by their lifts to chain
transitive subsets of the control semiflow. Furthermore, it is shown that
there always exists a unique chain control set in $M_{2}$. Section
\ref{Section6} extends the affine delay control system to a linear delay
control system on the state space $M_{2}\times\mathbb{R}$ and applies the
generalized Selgrade theorem from Section \ref{Section2}. Furthermore,
conjugation properties to subsets of the projective bundle are shown. Finally,
Section \ref{Section7} considers the special case of uniformly hyperbolic
systems. Here the affine delay control system is conjugate to its linear part,
and, for the chain control sets in $M_{2}$, stronger results can be
obtained.\medskip

\textbf{Notation}: A semiflow on a metric space $X$ with metric $d$ is a
continuous map $\psi:[0,\infty)\times X\rightarrow X$ with $\psi(0,x)=x$ and
$\psi(t+s,x)=\psi(t,\psi(s,x))$ for $t,s\in\lbrack0,\infty)$ and $x\in X$. A
subset $X^{\prime}\subset X$ is forward invariant, if $\psi(t,x)\in X^{\prime
}$ for all $t\geq0$ and $x\in X^{\prime}$. For a Banach space $Y$ the space of
bounded linear operators on $Y$ is denoted by $\mathcal{L}(Y)$.

\section{Semiflows on Banach bundles\label{Section2}}

This section presents important properties of linear semiflows on Banach
bundles and formulates an infinite dimensional version of Selgrade's theorem
due to Blumenthal and Latushkin \cite{BluL19}.

Let $B$ be a compact metric space with metric $d_{B}$ and let $Y$ be a real
Banach space with norm $\left\Vert \cdot\right\Vert $. Let $\theta
:\mathbb{R}\times B\rightarrow B$ be a continuous flow on $B$, i.e.,
$\theta(0,b)=b,\theta(t+s,b)=\theta(t,\theta(s,b)\,$for $t,s\in\mathbb{R}$ and
$b\in B$. A Banach bundle is given by $\mathcal{V}:=B\times Y$. Consider a
semiflow of injective linear operators over $(B,\theta)$ of the form,%
\[
\Phi:[0,\infty)\times B\times Y\rightarrow B\times Y,\Phi(t,b,y)=(\theta
(t,b),\phi(t,b,s)),t\geq0,b\in B,y\in Y,
\]
such that the following hypotheses hold:

(H1) the projection $\pi_{B}:\mathcal{V}\rightarrow B$ satisfies $\pi_{B}%
\circ\Phi=\theta$.

(H2) For any $(t,b)\in\lbrack0,\infty)\times B$, the map $\Phi(t,b,\cdot
):y\mapsto\Phi(t,b,y)$ is a bounded, injective linear operator of the fibers
$\{b\}\times Y\rightarrow\{\theta(t,b)\}\times Y$.

(H3) For each fixed $t\geq0$, the map $b\mapsto\Phi(t,b,\cdot)$ is continuous
in the operator norm topology on the space $\mathcal{L}(Y)$ of bounded linear
operators on $Y$.

(H4) The mapping $[0,\infty)\times B:(t,b)\mapsto\Phi(t,b,\cdot)$, is
continuous in the strong operator topology on $\mathcal{L}(Y)$, i.e., for all
$y\in Y$ one has $\Phi(t_{k},b_{k},y)\rightarrow\Phi(t_{0},b_{0},y)$ if
$(t_{k},b_{k})\rightarrow(t_{0},b_{0})$.

Where convenient, we will identify the fiber $\{b\}\times Y$ with $Y$ and
write $\Phi_{t}(b,v)=\Phi(t,b,v)$. Note the following consequence of these hypotheses.

\begin{proposition}
\label{Proposition_cont1}Hypotheses (H1)-(H4) imply that $\Phi:[0,\infty
)\times\mathcal{V}\rightarrow\mathcal{V}$ is a continuous mapping in the
metric $d_{\mathcal{V}}$ on $\mathcal{V}$ given by
\begin{equation}
d_{\mathcal{V}}((b_{1},v_{1}),(b_{2},v_{2})):=\max(d_{B}(b_{1},b_{2}%
),\left\Vert v_{1}-v_{2}\right\Vert ). \label{metric_V}%
\end{equation}

\end{proposition}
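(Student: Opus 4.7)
The plan is to fix a sequence $(t_k,b_k,v_k)\to(t_0,b_0,v_0)$ in $[0,\infty)\times\mathcal{V}$ and show that $\Phi(t_k,b_k,v_k)\to\Phi(t_0,b_0,v_0)$ in the metric $d_{\mathcal{V}}$. Because $d_{\mathcal{V}}$ is the maximum of the base distance and the fiber norm, it suffices to establish convergence in each component separately.

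For the base component, continuity is immediate: since $\theta$ is a continuous flow on $B$, one has $\theta(t_k,b_k)\to\theta(t_0,b_0)$. The substantive content lies in the fiber. Here I would use the triangle inequality
\[
\|\phi(t_k,b_k,v_k)-\phi(t_0,b_0,v_0)\|\leq\|\Phi(t_k,b_k,\cdot)(v_k-v_0)\|+\|\phi(t_k,b_k,v_0)-\phi(t_0,b_0,v_0)\|.
\]
The second summand tends to zero directly by hypothesis (H4), applied at the fixed vector $v_0$.

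For the first summand I would bound it by $\|\Phi(t_k,b_k,\cdot)\|_{\mathcal{L}(Y)}\,\|v_k-v_0\|$ and show that the operator norms $\|\Phi(t_k,b_k,\cdot)\|$ are uniformly bounded in $k$. This is the main obstacle, and the key observation is that it is handed to us by the Banach–Steinhaus theorem: for every fixed $y\in Y$, hypothesis (H4) gives $\Phi(t_k,b_k,y)\to\Phi(t_0,b_0,y)$, so the sequence $\{\Phi(t_k,b_k,y)\}_k$ is bounded in $Y$. Since $Y$ is a Banach space, the uniform boundedness principle applied to the family of bounded linear operators $\{\Phi(t_k,b_k,\cdot)\}_k$ yields a constant $C<\infty$ with $\|\Phi(t_k,b_k,\cdot)\|\leq C$ for all $k$. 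Combined with $\|v_k-v_0\|\to 0$, this makes the first summand vanish as well, completing the argument.

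The only delicate point is therefore the uniform bound on the operator norms along the sequence; hypotheses (H3) and the compactness of $B$ alone would only give such a bound for each fixed $t$, so the passage to a joint neighborhood in $(t,b)$ really does require the Banach–Steinhaus shortcut. Once this is in place the rest of the proof is a routine assembly of the two pieces.
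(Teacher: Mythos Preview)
Your proposal is correct and follows essentially the same route as the paper: the same triangle-inequality splitting of the fiber component, the same appeal to (H4) for the second summand, and the same use of the uniform boundedness principle to control the operator norms $\|\Phi(t_k,b_k,\cdot)\|$ in the first summand. The only cosmetic difference is that the paper first argues boundedness of $\{\|\Phi_t(b,y)\|:t\in I,\,b\in B\}$ over a compact set $I\times B$ (via continuity from (H4)) before invoking Banach--Steinhaus, whereas you apply it directly to the convergent sequence; both are valid.
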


\begin{proof}
By (H4) it follows that for every $y\in Y$ and $t$ in a compact interval
$I\subset\lbrack0,\infty)$ the set $\left\{  \left\Vert \Phi_{t}%
(b,y)\right\Vert ,t\in I,b\in B\right\}  $ is bounded. Thus the uniform
boundedness principle implies that $\left\{  \left\Vert \Phi_{t}%
(b,\cdot)\right\Vert ,t\in I,b\in B\right\}  $ is bounded.

Let $(t_{k},b_{k},y_{k})\rightarrow(t_{0},b_{0},v_{0})$ in $[0,\infty)\times
B\times Y$ and consider with the metric (\ref{metric_V}) $d_{\mathcal{V}}%
(\Phi(t_{k},b_{k},y_{k}),\Phi(t_{0},b_{0},y_{0}))$. Then $d_{B}(\theta
(t_{k},b_{k}),\theta(t_{0},b_{0}))\rightarrow0$ by continuity of $\theta$. For
the second component one estimates
\begin{align*}
&  \left\Vert \phi(t_{k},b_{k},y_{k})-\phi(t_{0},b_{0},y_{0})\right\Vert \\
&  \leq\left\Vert \phi(t_{k},b_{k},y_{k})-\phi(t_{k},b_{k},y_{0})\right\Vert
+\left\Vert \phi(t_{k},b_{k},y_{0})-\phi(t_{0},b_{0},y_{0})\right\Vert \\
&  \leq\left\Vert \Phi(t_{k},b_{k},\cdot)\right\Vert \,\left\vert \left\vert
y_{k}-y_{0}\right\vert \right\vert +\left\Vert \phi(t_{k},b_{k},y_{0}%
)-\phi(t_{0},b_{0},y_{0})\right\Vert .
\end{align*}
Since the factors $\left\Vert \Phi_{t_{k}}(b_{k},\cdot)\right\Vert $ remain
bounded the first summand converges to $0$. The second summand converges to
$0$ by (H4).
\end{proof}

We write $\mathbb{P}\mathcal{V}$ for the projective bundle $B\times
\mathbb{P}Y$. Here $\mathbb{P}Y$ is the projective space of $Y$ defined by
$\mathbb{P}Y:=(Y\setminus\{0\})/\sim$, where $v\sim w$ for $v,w\in
Y\setminus\{0\}$ if $v=\lambda w$ for some $\lambda\in\mathbb{R}%
\setminus\{0\}$. The metric on $\mathbb{P}\mathcal{V}$ is defined by
\begin{align}
d_{\mathbb{P}\mathcal{V}}((b_{1},v_{1}),(b_{2},v_{2}))  &  :=\max\{d_{B}%
(b_{1},b_{2}),d_{\mathbb{P}}(v_{1},v_{2})\}\text{ with}\label{metric_P}\\
d_{\mathbb{P}}(v,w)  &  :=\min\left\{  \frac{v}{\left\Vert v\right\Vert
}-\frac{w}{\left\Vert w\right\Vert },\frac{v}{\left\Vert v\right\Vert }%
+\frac{w}{\left\Vert w\right\Vert }\right\}  .\nonumber
\end{align}
Since the operators $\Phi_{t}(b,\cdot)$ are injective by (H2) the linear
semiflow descends to the projectivized semiflow $\mathbb{P}\Phi:[0,\infty
)\times\mathbb{P}\mathcal{V}\rightarrow\mathbb{P}\mathcal{V}$ which is continuous.

Recall the following definition from Blumenthal and Latushkin
\cite[Definitions 2.3 and 2.7]{BluL19}.

\begin{definition}
An asymptotically compact attractor of a semiflow $\psi$ on a metric space $X$
is a compact forward invariant set $A\subset X$ such that for some
$\varepsilon>0$ the following properties hold:

(i) For some $S>0$ we have that $\overline{\psi([S,\infty)\times
B_{\varepsilon}(A))}\subset B_{\varepsilon}(A)$ and%
\[
A=\omega(B_{\varepsilon}(A)):=\left\{  y\in X\left\vert \exists t_{k}%
\rightarrow\infty,\exists x_{k}\in B_{\varepsilon}(A):\psi(t_{k}%
,x_{k})\rightarrow y\right.  \right\}  ;
\]

(ii) for any sequence $t_{k}\rightarrow\infty$ and any sequence of points
$x_{k}\in B_{\varepsilon}(A)$ it follows that $\psi(t_{k},x_{k}),k\in
\mathbb{N}$, has a convergent subsequence.
\end{definition}

The points which have pre-images will be relevant.

\begin{definition}
\label{Definition_entire}For a semiflow $\psi$ on $X$ such that $\psi
(t,\cdot)$ is injective for all $t\geq0$, a point $x\in X$ defines an entire
solution,\ if for all $t>0$ there is $y\in X$ with $\psi(t,y)=x$.
\end{definition}

We slightly abuse notation and write $\psi(-t,x)\in X$ for the pre-image
$\psi(t,\cdot)^{-1}(x)$; by injectivity, $\psi(-t,x)$ is a unique element of
$X$ when the pre-image exists. When we write $\psi(-t,x)$ we tacitly suppose
that this pre-image exists. A set $Y\subset X$ is invariant if $\psi(t,x)\in
Y$ for all $t\in\mathbb{R}$.

For $\varepsilon,\tau>0$ an $(\varepsilon,\tau)$-chain from $x$ to $y$ is
given by $q\in\mathbb{N},\ x_{0}\allowbreak=x,x_{1},\ldots,x_{q}=y$ in $X$,
and $\tau_{0},\ldots,\tau_{q-1}\geq\tau$ with
\[
d(\psi(\tau_{j},x_{j}),x_{j+1})<\varepsilon\text{ }\,\text{for\thinspace
}j=0,\ldots,q-1.
\]

\begin{definition}
\label{Definition_chain_trans}(i) A point $x\in X$ is chain recurrent for
$\psi$, if for all $\varepsilon,\tau>0$ there are $(\varepsilon,\tau)$-chains
from $x$ to $y$. The chain recurrent set $\mathcal{R}$ is the set of all chain
recurrent points. If $\psi(t,\cdot)$ is injective for all $t\geq0$, the entire
chain recurrent set $\mathcal{R}^{\#}$ is the set of all chain recurrent
points $x\in X$ which define entire solutions in $\mathcal{R}$.

(ii) A nonvoid set $Y\subset X$ is chain transitive if for all $x,y\in Y$ and
all $\varepsilon,\tau>0$ there are $(\varepsilon,\tau)$-chains from $x$ to $y$.
\end{definition}

\begin{remark}
For a semiflow with compact state space, it follows that through every point
in the chain recurrent set $\mathcal{R}$ there exists an entire solution in
$\mathcal{R}$, hence $\mathcal{R}^{\#}=\mathcal{R}$. This follows as in
Bronstein and Kopanskii \cite[Section 8]{BroK88}; cf. also Li
\cite[Proposition 2.5]{Li07}\textbf{.}
\end{remark}

For a linear semiflow $\Phi=(\theta,\phi)$ on $\mathcal{V}=B\times Y$ consider
two continuously varying, forward invariant subbundles with $\mathcal{V}%
=\mathcal{E}\oplus\mathcal{F}$ for which $\dim\mathcal{E}<\infty$. The
subbundles $\mathcal{E}$ and $\mathcal{F}$ are exponentially separated if
there exist constants $K,\gamma>0$ such that for all fibers $\mathcal{E}_{b}$
and $\mathcal{F}_{b},b\in B$,%
\[
\left\vert \Phi_{t}(b,\cdot)\right\vert _{\mathcal{F}_{b}}\leq Ke^{-\gamma
t}m(\Phi_{t}(b,\cdot))_{\mathcal{E}_{b}}\text{ for all }t>0,
\]
where $m$ is the minimum norm, $m(\Phi_{t}(b,\cdot))_{\mathcal{E}_{b}}%
:=\min\left\{  \left\vert \phi_{t}(b,y)\right\vert ,y\in\mathcal{E}%
_{b}\right\}  $.

The following theorem holds by \cite[Theorems A and B and Corollary
1.4]{BluL19}. The shorthand $1\leq i<N+1$ means that if $N=\infty$, then
$i\in\mathbb{N}$, and if $N<\infty$, then $1\leq i\leq N$.

\begin{theorem}
\label{Theorem_Selgrade1}Assume that $Y$ is a separable Banach space and that
$B$ is chain transitive for the base flow $\theta$. Let $\Phi$ be a linear
semiflow on $\mathcal{V}=B\times Y$ satisfying hypotheses (H1)--(H4) as above.
Then there is an at-most countable sequence $\{\mathcal{A}_{i}\}_{i=0}%
^{N},\allowbreak N\in\{0,1,\ldots\}\cup\{\infty\}$, of subsets of
$\mathbb{P}\mathcal{V}$ with $\mathcal{A}_{0}=\varnothing,\mathcal{A}%
_{i}\subset\mathcal{A}_{i+1}$ for $1\leq i<N$, with the following properties,
for any $1\leq i<N+1$:

(i) The set $\mathcal{A}_{i}$ is an asymptotically compact attractor for
$\mathbb{P}\Phi$.

(ii) The sequence $\{\mathcal{A}_{i}\}$ is the finest such collection in the
following sense: If $\mathcal{A}$ is any nonempty asymptotically compact
attractor for $\mathbb{P}\Phi$, then $\mathcal{A}=\mathcal{A}_{i}$ for some
$1\leq i<N+1$.

(iii) For the finite dimensional subbundles $\mathcal{V}_{i}^{+}%
=\mathbb{P}^{-1}\mathcal{A}_{i},i\in\{1,\ldots,N+1\}$, there are subbundles
$\mathcal{V}_{i}^{-}$ such that $\mathcal{V}=B\times Y=\mathcal{V}_{i}%
^{+}\oplus\mathcal{V}_{i}^{-}$ is an exponentially separated splitting of
$\mathcal{V}$.

(iv) The subbundles $\mathcal{V}_{i}:=\mathcal{V}_{i}^{+}\cap\mathcal{V}%
_{i-1}^{-}$ are finite dimensional, invariant subbundles of $\mathcal{V}$ such
that%
\[
\mathcal{V}_{i}^{+}=\mathcal{V}_{1}\oplus\cdots\oplus\mathcal{V}_{i}.
\]

(v) The sets $\mathcal{M}_{i}=\mathbb{P}\mathcal{V}_{i}$ are maximal chain
transitive for the projectivized flow $\mathbb{P}\Phi$ restricted to
$\mathbb{P}\mathcal{V}_{i}^{+}$.
\end{theorem}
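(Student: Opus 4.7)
The plan is to follow the classical Selgrade blueprint lifted to the projective bundle $\mathbb{P}\mathcal{V}$, using asymptotic compactness in place of the compactness of $\mathbb{P}Y$ that would be available in finite dimensions. First, I would study the projectivized semiflow $\mathbb{P}\Phi$ on $\mathbb{P}\mathcal{V}$: since $\pi_B\circ\Phi=\theta$ and the base $B$ is chain transitive, chain recurrence on $\mathbb{P}\mathcal{V}$ is governed entirely by the fiber dynamics modulo scale. I would collect all nonempty asymptotically compact attractors $\mathcal{A}$ of $\mathbb{P}\Phi$. Using hypothesis (H2) together with the asymptotic compactness condition (ii) of Definition of asymptotically compact attractor, I would show that this collection is totally ordered by inclusion and at most countable; this gives the enumeration $\mathcal{A}_0=\varnothing\subset\mathcal{A}_1\subset\cdots$ and proves parts (i) and (ii).

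Next, I would pass from attractors back to subbundles. For each $\mathcal{A}_i$, define $\mathcal{V}_i^{+}:=\mathbb{P}^{-1}(\mathcal{A}_i)\cup(B\times\{0\})$. The forward invariance of $\mathcal{A}_i$ and linearity of $\Phi$ make $\mathcal{V}_i^{+}$ a forward invariant set, and I would argue that its fibers are linear subspaces by showing that $\mathbb{P}^{-1}(\mathcal{A}_i)\cap(\{b\}\times Y)$ is closed under addition using that two points in the attractor have controlled forward orbits and the attractor is closed. Finite dimensionality of the fibers is the central technical point: I would use property (ii) of asymptotic compactness (relative compactness of forward orbits of bounded fiber sets) combined with (H2)--(H3) to show that no infinite-dimensional linear subspace can sit inside the attractor, since the unit sphere of an infinite-dimensional Banach subspace fails to have a compact image under a semiflow whose fibers are injective bounded operators with a definite spectral gap (obtained from the exponential separation below).

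For part (iii), I would construct the complementary subbundle $\mathcal{V}_i^{-}$ as the dual repeller of the attractor $\mathcal{A}_i$ in the sense of the attractor-repeller decomposition. Exponential separation would then be extracted from the spectral gap between the finite-dimensional part $\mathcal{V}_i^{+}$ and its complement: by a Lyapunov-function construction over the chain transitive base, points in $\mathcal{A}_i$ and points in the dual repeller have upper Lyapunov exponents separated by a uniform $\gamma>0$, and standard cone-field arguments (applied fiberwise) upgrade this pointwise separation to the uniform estimate $|\Phi_t|_{\mathcal{V}_i^{-}}|\le Ke^{-\gamma t}m(\Phi_t|_{\mathcal{V}_i^{+}})$. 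Part (iv) is then the elementary linear-algebra consequence $\mathcal{V}_i^{+}=\mathcal{V}_1\oplus\cdots\oplus\mathcal{V}_i$, and for part (v) maximal chain transitivity of $\mathcal{M}_i=\mathbb{P}\mathcal{V}_i$ follows from its description as the intersection $\mathcal{A}_i\cap\mathcal{A}_{i-1}^{*}$ of attractor and dual repeller, since such Morse sets are always maximal chain transitive inside the corresponding invariant subbundle.

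The main obstacle is precisely the finite dimensionality of each $\mathcal{V}_i^{+}$ and the uniformity of the exponential gap: in finite dimensions both are automatic from compactness of the projective bundle, but here one must bootstrap between asymptotic compactness (to keep the attractors small) and exponential separation (to keep them apart), and these two features have to be proved in tandem rather than sequentially. Circumventing this circularity, presumably via an inductive construction that peels off one attractor at a time while simultaneously producing its exponential gap, is the technical heart of the argument and is exactly what Blumenthal and Latushkin accomplish in \cite{BluL19}.
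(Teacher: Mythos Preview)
The paper does not prove this theorem at all: it is quoted verbatim as a result of Blumenthal and Latushkin, with the single sentence ``The following theorem holds by \cite[Theorems A and B and Corollary 1.4]{BluL19}.'' Your outline is a reasonable sketch of the strategy behind the Blumenthal--Latushkin argument, and you correctly identify that the simultaneous bootstrap between finite dimensionality of $\mathcal{V}_i^{+}$ and the uniform exponential gap is the technical core and that it is carried out in \cite{BluL19}. In the context of the present paper, however, no proof is expected beyond the citation; your final paragraph, which defers the hard step to \cite{BluL19}, is already the entirety of what the paper does.
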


The subbundles $\{\mathcal{V}_{i}\}_{i=1}^{N}$ are called the discrete
Selgrade decomposition of $\Phi$.

\begin{remark}
\label{Remark_order}For every $i$ with $1\leq i<N+1$ the sets $\mathcal{M}%
_{j}=\mathbb{P}\mathcal{V}_{j},1\leq j\leq i$, are the chain recurrent
components of the flow $\mathbb{P}\Phi$ restricted to $\mathbb{P}%
\mathcal{V}_{i}^{+}$. Since the $\mathcal{V}_{j}$ are finite dimensional they
are linearly ordered by $\mathcal{M}_{i}\preceq\mathcal{M}_{j}\,$for $i\leq j$
in the order of Morse sets on compact metric spaces $X$, cf. Colonius and
Kliemann \cite[Proposition B.2.8]{ColK00}): $\mathcal{M}_{i}\,\preceq
\mathcal{M}_{j}$ if there\ are $\mathcal{M}_{j_{0}}=\mathcal{M}_{i}%
,\mathcal{M}_{j_{1}},\ldots,\mathcal{M}_{j_{l}\text{ }}=\mathcal{M}_{j}$ with
$j\leq j_{1},\ldots,j_{l}=i$ and $x_{1},\allowbreak\ldots,x_{j_{l}}\in X$ with
$\omega^{\ast}(x_{k})\subset\mathcal{M}_{j_{k-1}}$ and $\omega(x_{k}%
)\subset\mathcal{M}_{j_{k}}$ for $k=1,\ldots,l$. Thus the subbundle
$\mathcal{V}_{1}$ is the most unstable subbundle (this is opposite to the
numbering in Colonius and Kliemann \cite[Theorem 5.1.4]{ColK00}).
\end{remark}

\section{Delay equations\label{Section3}}

In this section we consider linear control systems described by delay
equations of the form (\ref{D1}). The underlying field is either
$\mathbb{K}=\mathbb{R}$ or $\mathbb{K}=\mathbb{C}$.

The solutions $x(t)=\psi(t,r,f,0)$ of the homogeneous equation%
\begin{equation}
\dot{x}(t)=A_{0}x(t)+\sum_{i=1}^{p}A_{i}x(t-h_{i}) \label{hom}%
\end{equation}
satisfy (cf. Curtain and Zwart \cite[Theorem 3.3.1]{CurtZ})%
\[
x(t)=e^{A_{0}t}r+\sum_{i=1}^{p}\int_{0}^{t}e^{A_{0}(t-s)}A_{i}x(s-h_{i}%
)ds\text{ for }t\geq0.
\]
For $\tau>0$, there are constants $C_{\tau},D_{\tau}>0$ such that, for
$t\in\lbrack0,\tau]$, the following estimates hold:%
\begin{align}
\left\Vert x(t)\right\Vert ^{2}  &  \leq C_{\tau}\left[  \left\Vert
r\right\Vert ^{2}+\left\Vert f\right\Vert _{L^{2}([-h,0],\mathbb{K}^{n})}%
^{2}\right]  ,\label{CZ_3.67}\\
\int_{0}^{t}\left\Vert x(\tau)\right\Vert ^{2}d\tau &  \leq D_{\tau}\left[
\left\Vert r\right\Vert ^{2}+\left\Vert f\right\Vert _{L^{2}([-h,0],\mathbb{K}%
^{n})}^{2}\right]  . \label{CZ_3.68}%
\end{align}
This follows from the proof of \cite[Lemma 3.3.3]{CurtZ}. Arguing as in
\cite[Theorem 3.3.1]{CurtZ} one finds that the solution $x(t)=\psi(t,r,f,u)$
of the inhomogeneous equation (\ref{D1}) satisfies%
\begin{equation}
x(t)=e^{A_{0}t}r+\int_{0}^{t}e^{A_{0}(t-s)}\left[  \sum_{i=1}^{p}%
A_{i}x(s-h_{i})+\sum_{i=0}^{p}B_{i}u(s-h_{i})\right]  ds\text{ for }t\geq0.
\label{x_inhom}%
\end{equation}
We also recall the variation-of-parameters formula (cf. Hale \cite[Chapter 6,
Theorem 2.1 and Corollary 2.1 on pp. 143-145]{Hale}, Delfour \cite[Theorem
1.2]{Delfour77}). Let $X(t),t\geq0$, be the $n\times n$-matrix solution of%
\[
\frac{d}{dt}X(t)=A_{0}X(t)+\sum_{i=1}^{p}A_{i}X(t-h_{i})\text{ with
}X(t)=0\text{ for }t\in\lbrack-h,0),X(0)=I_{n}.
\]
Then the solution of the inhomogeneous equation (\ref{D1}) with initial value
$(r,f)\in\mathbb{R}^{n}\times L^{2}([-h,0],\mathbb{R}^{n})$ is given by%
\begin{equation}
\psi(t,r,f,u)=\psi(t,r,f,0)+\int_{0}^{t}X(t-s)\sum_{i=0}^{p}B_{i}%
u(s-h_{i})ds=\psi(t,r,f,0)+\psi(t,0,0,u). \label{VdP}%
\end{equation}
In particular, this shows that the solutions can be split into the homogeneous
and inhomogeneous parts.

Next we recall some facts on the description of these systems in the separable
Hilbert space $M_{2}:=M_{2}([-h,0],\mathbb{K}^{n})=\mathbb{K}^{n}\times
L^{2}([-h,0],\mathbb{K}^{n})$; cf. Curtain and Zwart \cite[Example
5.1.12]{CurtZ}. With the state $y(t)=\left(  x(t),x_{t}\right)  ^{\top}\in
M_{2}$ the homogeneous equation (\ref{hom}) can be reformulated as%
\begin{equation}
\dot{y}(t)=Ay(t),y(0)=y_{0}=\left(
\begin{array}
[c]{c}%
r\\
f
\end{array}
\right)  ,\,A\left(
\begin{array}
[c]{c}%
r\\
f
\end{array}
\right)  :=\left(
\begin{array}
[c]{c}%
A_{0}r+\sum_{i=1}^{p}A_{i}f(-h_{i})\\
\frac{df}{ds}(\cdot)
\end{array}
\right)  . \label{hom_delay}%
\end{equation}
Here $A$ is the infinitesimal generator of a strongly continuous semigroup
$T(t),t\geq0$, and the domain of definition of $A$ is%
\[
D(A)=\left\{  (r,f)^{\top}\left\vert f\in W^{1,2}([-h,0],\mathbb{K}^{n})\text{
and }f(0)=r\right.  \right\}  .
\]
The inhomogeneous equation (\ref{D1}) can be reformulated as%
\begin{equation}
\dot{y}(t)=Ay(t)+B\left(  u(t),u(t-h_{1}),\ldots,u(t-h_{p})\right)
,\,y(0)=y_{0}=\left(  r,f\right)  ^{\top}, \label{D2}%
\end{equation}
where%
\[
B:\mathbb{R}^{m(p+1)}\rightarrow M_{2},B(u_{0},u_{1},\ldots,u_{p}):=\left(
\sum\nolimits_{i=0}^{p}B_{i}u_{i},0\right)  ^{\top},
\]
is a bounded linear operator. The mild solution of (\ref{D2}) is
$y(t)=\varphi(t,y_{0},u)$ given by%
\begin{equation}
y(t)=T(t)y_{0}+\int_{0}^{t}T(t-s)B\left(  u(s),u(s-h_{1}),\ldots
,u(s-h_{p})\right)  ds. \label{mild}%
\end{equation}
It is related to the solution of (\ref{D1}) by $y(t)=\left(  x(t),x_{t}%
\right)  ^{\top}$. Note that%
\begin{equation}
\varphi(t,y_{0},u)=\varphi(t,y_{0},0)+\varphi(t,0,u)=T(t)y_{0}+\varphi
(t,0,u)\in M_{2},t\geq0. \label{phi_aff}%
\end{equation}
Since mild solutions are strongly continuous in $t$ the map $t\mapsto
\varphi(t,y_{0},u)$ is continuous for all $(y_{0},u)$ and $\varphi
(t,y_{0},u)\in D(A)$ for $t\geq h$. This follows since the solutions $x(t)$ of
(\ref{x_inhom}) are absolutely continuous with derivatives in $L^{2}$ on any
compact interval.

We note the following spectral properties of linear delay equations referring
to Curtain and Zwart \cite{CurtZ}, Manitius \cite{Mani80}, and also to
Diekmann, van Gils, Verduyn Lunel, and Walther \cite[Chapter 5]{DivGVLW}.

The infinitesimal generator $A$ has a pure point spectrum $\sigma(A)$
consisting of the (countably infinite or finitely many) zeros of
$\Delta(s)=\det(sI_{n}-\sum_{i=0}^{p}A_{i}e^{-h_{i}s})$. For every $\mu
\in\sigma(A)$ the generalized eigenspace is finite dimensional. The span of
the generalized eigenvectors of $A$ is dense in $M_{2}([-h,0],\mathbb{C}^{n})$
if and only if $\det A_{p}\not =0$ (cf. \cite[Corollary 5.5]{Mani80},
\cite[Theorem 3.4.4]{CurtZ}). The same is true for the span of the real
generalized eigenspaces $E(\mu),\mu\in\sigma(A)$, in $M_{2}([-h,0],\mathbb{R}%
^{n})$.

The linear delay equation is (uniformly) hyperbolic if $\sigma(A)\cap
\imath\mathbb{R=\varnothing}$, i.e. if $\Delta(\imath x)\not =0$ for all
$x\in\mathbb{R}$. A consequence of hyperbolicity is the following spectral
decomposition into a finite dimensional subspace $V^{+}$ and a stable subspace
$V^{-}$; cf. the discussion in Curtain and Zwart \cite[Theorem 8.2.5]{CurtZ}.

\begin{theorem}
\label{Theorem_hyperbolic_delay}If the linear semiflow $T(t),t\geq0$ is
hyperbolic, it admits a decomposition $M_{2}=V^{+}\oplus V^{-}$ into
$T(\cdot)$-invariant closed subspaces $V^{+}$ and $V^{-}$ with $\dim
V^{+}<\infty$ and constants $\alpha,K>0$ such for all $t\geq0$%
\[
\left\Vert T(t)y^{-}\right\Vert \leq Ke^{-\alpha t}\left\Vert y^{-}\right\Vert
\text{ for }y^{-}\in V^{-}\text{ and }\left\Vert T(t)y^{+}\right\Vert \geq
Ke^{\alpha t}\left\Vert y^{+}\right\Vert \text{ for }y^{+}\in V^{+}.
\]

\end{theorem}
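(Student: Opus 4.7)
The plan is to construct the splitting via the Riesz spectral projection associated with the unstable part of $\sigma(A)$, and then derive the dichotomy estimates from eventual compactness of the delay semigroup, following the strategy of Curtain--Zwart \cite[Theorem 8.2.5]{CurtZ}. First, I recall from the discussion preceding the theorem that $A$ has pure point spectrum consisting of the zeros of $\Delta(s)=\det(sI_n - \sum_{i=0}^p A_i e^{-h_i s})$, each with finite dimensional generalized eigenspace, and it is a standard property of such characteristic functions that only finitely many zeros lie in any right half-plane $\{s:\mathrm{Re}(s)\geq\beta\}$. Hyperbolicity $\sigma(A)\cap\imath\mathbb{R}=\varnothing$ therefore forces $\sigma^+:=\sigma(A)\cap\{\mathrm{Re}(s)>0\}$ to be a \emph{finite} set, strictly inside the open right half-plane, while $\sigma^-:=\sigma(A)\setminus\sigma^+$ is contained in some half-plane $\{\mathrm{Re}(s)\leq-\alpha_0\}$ for a suitable $\alpha_0>0$.

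Next I would choose a closed rectifiable curve $\Gamma\subset\mathbb{C}$ enclosing $\sigma^+$ and separated from $\sigma^-$, and define the Riesz projection
\[
P^+ := \frac{1}{2\pi\imath}\int_\Gamma (sI-A)^{-1}\,ds.
\]
Setting $V^+:=P^+M_2$ and $V^-:=(I-P^+)M_2$ produces a topological direct sum $M_2=V^+\oplus V^-$ into closed subspaces. Since $P^+$ commutes with the resolvent and therefore with $T(t)$, the subspaces $V^\pm$ are $T(\cdot)$-invariant; standard spectral theory gives $\sigma(A|_{V^+})=\sigma^+$ and $\sigma(A|_{V^-})=\sigma^-$, and because each eigenvalue in $\sigma^+$ has finite dimensional generalized eigenspace, $\dim V^+<\infty$. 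On $V^+$ the restriction $T(t)|_{V^+}$ extends to a group whose generator has spectrum in $\{\mathrm{Re}(s)>0\}$, so the usual matrix exponential estimate yields constants $K,\alpha>0$ with $\|T(t)y^+\|\geq Ke^{\alpha t}\|y^+\|$ for every $y^+\in V^+$ and $t\geq0$.

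The main obstacle is the decay estimate on $V^-$: in infinite dimensions the spectral bound need not coincide with the exponential growth bound, so the mere placement of $\sigma^-$ in the open left half-plane is not enough. The gap is closed by the well-known eventual norm compactness of the delay semigroup (for $t\geq ph$ the operator $T(t)$ on $M_2$ is compact, cf. Hale \cite[Chapter 6]{Hale}), which passes to the restriction $T(t)|_{V^-}$ and forces the spectrum-determined growth property. From $\sup_{\mu\in\sigma^-}\mathrm{Re}(\mu)\leq-\alpha_0<0$ one then obtains $\|T(t)y^-\|\leq Ke^{-\alpha t}\|y^-\|$ for $y^-\in V^-$ after shrinking $\alpha\in(0,\alpha_0)$ and enlarging $K$ if necessary. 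Combining the two estimates yields the stated dichotomy, and since the explicit construction coincides with that in \cite[Theorem 8.2.5]{CurtZ}, the theorem follows.
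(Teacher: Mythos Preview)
Your sketch is correct and in fact goes further than the paper does: the paper offers no proof of this theorem at all, merely citing it as a consequence of the discussion in Curtain and Zwart \cite[Theorem 8.2.5]{CurtZ}. Your argument via the Riesz projection onto the finite unstable spectral set together with eventual compactness of the delay semigroup (compactness holds already for $t\geq h$, not just $t\geq ph$, since $T(t)$ maps $M_2$ into $D(A)$ and the embedding $W^{1,2}\hookrightarrow L^2$ is compact) is precisely the mechanism behind that reference, so there is nothing to correct.
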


\begin{remark}
Since $V^{+}$ is finite dimensional and $0$ is not in the spectrum of $T(t)$,
it follows that the restriction of $T(t)$ to $V^{+}$ is an isomorphism and we
can define $T(-t):=T(t)^{-1},t\geq0$. The condition above is equivalent to%
\[
\left\Vert T(t)z^{+}\right\Vert \leq K^{-1}e^{\alpha t}\left\Vert
z^{+}\right\Vert \text{ for }t\leq0\text{ and }z^{+}\in V^{+}.
\]
Indeed, one may write $y^{+}=T(-t)z^{+}$ with $z^{+}\in V^{+}$ and it follows
that%
\[
\left\Vert z^{+}\right\Vert \geq Ke^{\alpha t}\left\Vert T(-t)z^{+}\right\Vert
\text{ for }t\geq0.
\]

\end{remark}

The following result is a special case of Theorem \ref{Theorem_Selgrade1}.

\begin{theorem}
\label{Theorem_Selgrade_hom}Consider the linear semigroup $T(t),t\geq0$ and
assume that $T(t)$ is injective for $t\geq0$. Order the eigenvalues according
to their real parts $\operatorname{Re}\mu_{j}$ in decreasing order and define
for $i\in\mathbb{N}$ subspaces of $M_{2}$ by%
\[
V_{i}=\bigoplus_{\operatorname{Re}\mu_{j}=\lambda_{i}}E(\mu_{j}),~V_{i}%
^{+}=\bigoplus_{\operatorname{Re}\mu_{j}\geq\lambda_{i}}E(\mu_{j})\text{, and
}V_{i}^{-}=\overline{\bigoplus_{\operatorname{Re}\mu_{j}<\lambda_{i}}E(\mu
_{j})}.
\]
Then $V_{i}^{+}=V_{1}\oplus\cdots\oplus V_{i}$ and it follows that
$V_{i}=V_{i}^{+}\cap V_{i-1}^{-}$ are finite dimensional, forward invariant
subspaces of $M_{2}$ and $M_{2}=V_{i}^{+}\oplus V_{i}^{-}$ is an exponentially
separated splitting. Furthermore the following properties hold:

(i) For any $1\leq i<N^{0}:=\infty$ the set $A_{i}^{0}:=\mathbb{P}V_{i}^{+}$
is an asymptotically compact attractor for the projectivized semiflow
$\mathbb{P}T(\cdot)$.

(ii) Any nonempty asymptotically compact attractor for $\mathbb{P}T(\cdot)$
coincides with some $A_{i}^{0},i\in\mathbb{N}.$

(iii) For every $i\in\mathbb{N}$ the set $\mathcal{M}_{i}=\mathbb{P}V_{i}$ is
maximal chain transitive for the projectivized flow $\mathbb{P}T(\cdot)$
restricted to $\mathbb{P}V_{i}^{+}$.
\end{theorem}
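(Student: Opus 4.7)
The strategy is to recognize the homogeneous delay semigroup $T(t)$ on $M_2$ as a special case of the Blumenthal--Latushkin framework of Theorem~\ref{Theorem_Selgrade1}, taking $B=\{b_0\}$ a one-point base (automatically compact and chain transitive) and $Y=M_2$, which is separable. Define the linear semiflow $\Phi(t,b_0,y):=(b_0,T(t)y)$. Then (H1) is automatic, (H2) is the standing injectivity assumption on $T(t)$, (H3) is vacuous since $B$ is a point, and (H4) is the strong continuity of the $C_0$-semigroup. Theorem~\ref{Theorem_Selgrade1} then produces an at-most countable sequence $\{\mathcal{V}_i\}$ of finite-dimensional $T$-invariant subspaces together with complementary invariant subspaces $\mathcal{V}_i^-$ and an exponentially separated splitting $M_2=\mathcal{V}_i^+\oplus\mathcal{V}_i^-$ with $\mathcal{V}_i^+=\mathcal{V}_1\oplus\cdots\oplus\mathcal{V}_i$, whose projectivizations exhaust the asymptotically compact attractors of $\mathbb{P}T(\cdot)$.

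What remains is to identify each abstract $\mathcal{V}_i^+$ with the concrete grouped eigenspace $V_i^+$. Here I would use the spectral facts for the delay generator $A$ recalled in Section~\ref{Section3}: $\sigma(A)$ is a discrete pure point spectrum, in every right half-plane $\{\operatorname{Re} s\geq\lambda\}$ there are only finitely many eigenvalues, and each $E(\mu_j)$ is finite dimensional. Hence the distinct real parts $\lambda_1>\lambda_2>\cdots$ are strictly decreasing with $\lambda_i\to-\infty$, each $V_i$ is finite dimensional and $T$-invariant, and there is a bounded finite-rank spectral projection onto $V_i^+$ commuting with $T(t)$, giving a topological direct sum $M_2=V_i^+\oplus V_i^-$. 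Since $T(t)$ is eventually smoothing ($\varphi(t,y_0,u)\in D(A)$ for $t\geq h$), the spectrum-determined growth property holds on each closed invariant subspace, so the growth bound of $T|_{V_i^-}$ equals $\lambda_{i+1}$ while the minimum exponential growth of $T|_{V_i^+}$ equals $\lambda_i$. For any $\gamma\in(0,(\lambda_i-\lambda_{i+1})/2)$ there is $K>0$ with
\[
\|T(t)|_{V_i^-}\|\leq Ke^{(\lambda_{i+1}+\gamma)t},\qquad m(T(t)|_{V_i^+})\geq K^{-1}e^{(\lambda_i-\gamma)t},
\]
which combines to the required exponential separation $\|T(t)|_{V_i^-}\|\leq K'e^{-\gamma't}\,m(T(t)|_{V_i^+})$. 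Consequently each $\mathbb{P}V_i^+$ is an asymptotically compact attractor for $\mathbb{P}T(\cdot)$, and the strict nesting $V_1^+\subsetneq V_2^+\subsetneq\cdots$ together with the finest-decomposition clause~(ii) of Theorem~\ref{Theorem_Selgrade1} forces $\mathbb{P}V_i^+=\mathcal{A}_i$, hence $V_i=\mathcal{V}_i$.

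With this identification, items (i), (ii), (iii) of the statement are exactly items (i), (ii), (v) of Theorem~\ref{Theorem_Selgrade1} transcribed to the base $B=\{b_0\}$. The main obstacle is the identification step: one must lean on the two spectral features specific to linear delay equations---finiteness of $\sigma(A)$ in any right half-plane, and the spectrum-determined growth that follows from the eventual compactness/smoothing of $T(t)$. Once these are granted, the Blumenthal--Latushkin machinery of Section~\ref{Section2} produces the decomposition abstractly, and the uniqueness of the finest attractor sequence matches it to the grouped generalized eigenspaces.
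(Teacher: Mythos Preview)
Your approach matches the paper's exactly: treat $T(\cdot)$ as a linear semiflow over a one-point base, verify (H1)--(H4) (injectivity by hypothesis, strong continuity of the $C_0$-semigroup), and invoke Theorem~\ref{Theorem_Selgrade1}. The paper's proof stops there and does not spell out the identification of the abstract Selgrade bundles $\mathcal{V}_i$ with the concrete spectral subspaces $V_i$; you correctly flag this as the substantive step and handle it via the standard spectral facts for delay generators (finiteness of $\sigma(A)$ in any right half-plane, finite-rank spectral projections, and spectrum-determined growth from eventual compactness), so your argument is in fact more complete than the paper's own proof.
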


\begin{proof}
The linear semigroup $T(t),t\geq0$, can be considered as a linear semiflow on
a vector bundle with trivial base space. Hypotheses (H1)-(H4) of Theorem
\ref{Theorem_Selgrade1} hold by the definitions and strong continuity of
$T(\cdot)$.
\end{proof}

\begin{remark}
For the system without control restriction and without delays in the control,
the reachable subspace from the origin is dense in $M_{2}([-h,0],\mathbb{C}%
^{n})$ if and only if $\mathrm{rank}\left[  \Delta(s),B_{0}\right]  =n$ for
all $s\in\mathbb{C}$ and $\mathrm{rank}\left[  A_{p},B_{0}\right]  =n$
(Curtain and Zwart \cite[Theorem 6.3.13]{CurtZ}).
\end{remark}

We note the following injectivity property of delay equations.

\begin{proposition}
\label{Proposition_injective1}Consider the linear delay control system
(\ref{D1}). Then for all $t\geq0$ the maps $T(t)$ are injective if and only if
the matrix $A_{p}$ is invertible.
\end{proposition}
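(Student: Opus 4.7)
The plan is to reduce everything to injectivity of the single operator $T(h)$. Via the semigroup identity $T(s+t)=T(s)T(t)$ one checks that $T(h)$ injective is equivalent to $T(t)$ injective for every $t \geq 0$: the relation $T(h)=T(h-t)T(t)$ transfers injectivity to $0\leq t\leq h$, and for $t>h$ the decomposition $t=kh+\rho$ with $T(t)=T(h)^{k}T(\rho)$ handles the remaining range. Thus it suffices to prove: $A_{p}$ is invertible if and only if $T(h)$ is injective.

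For the forward direction, suppose $A_{p}$ is invertible and $T(h)(r,f)=0$. Since $T(h)(r,f)=(x(h),x_{h})$ with $x$ continuous on $[0,\infty)$, this forces $x\equiv 0$ on $[0,h]$ and $r=x(0)=0$. Concatenate to $\tilde{x}$, defined by $\tilde{x}=f$ on $[-h,0)$ and $\tilde{x}=x$ on $[0,h]$. Substituting $x\equiv\dot{x}\equiv 0$ into (\ref{hom}) on $[0,h]$ yields $\sum_{i=1}^{p}A_{i}\tilde{x}(t-h_{i})=0$ for a.a.\ $t\in[0,h]$; using invertibility of $A_{p}$ and setting $\sigma=t-h$ this rearranges to
\[
\tilde{x}(\sigma)=-A_{p}^{-1}\sum_{i=1}^{p-1}A_{i}\,\tilde{x}(\sigma+h-h_{i})\quad\text{for a.a.\ }\sigma\in[-h,0].
\]
Each argument $\sigma+h-h_{i}$ on the right, $i<p$, exceeds $\sigma$ by at least $h-h_{p-1}>0$. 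Hence whenever all these arguments lie in a region where $\tilde{x}$ is already known to vanish, so does $\tilde{x}(\sigma)$. Starting from the known zero-region $[0,h]$, one application of the identity extends it to $[-(h-h_{p-1}),h]$; iterating extends it by $h-h_{p-1}$ at each step, eventually covering $[-h,h]$ after finitely many steps. Therefore $f=0$ in $L^{2}$, and $T(h)$ is injective. (For $p=1$ the sum on the right is empty and $\tilde{x}\equiv 0$ on $[-h,0]$ follows at once.)

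For the converse, suppose $A_{p}v=0$ with $v\neq 0$, pick $\delta\in(0,h-h_{p-1})$, and set $r=0$, $f(s):=v\cdot\mathbf{1}_{[-h,-h+\delta]}(s)\in L^{2}\setminus\{0\}$. I would then verify that the $f$-contribution to the equation for $x$ vanishes identically on $[0,h]$: for $i<p$ the shifted support $[h_{i}-h,h_{i}-h+\delta]$ lies strictly below $0$ (because $\delta<h-h_{i}$), so $f(t-h_{i})=0$ for all $t\geq 0$; and for $i=p$ the value $f(t-h)$ lies in $\mathrm{span}(v)\subset\ker A_{p}$, so $A_{p}f(t-h)=0$. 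Thus $x$ solves a homogeneous delay equation with $x(0)=0$ and vanishing past, and by uniqueness $x\equiv 0$ on $[0,h]$. This gives $T(h)(0,f)=0$ while $(0,f)\neq 0$.

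The main technical obstacle is the iterative zero-region extension in the first direction: one must carefully check that the identity furnishing $\tilde{x}(\sigma)$ from later values of $\tilde{x}$ arises only from (\ref{hom}) on $[0,\infty)$, and that the geometric expansion rate $h-h_{p-1}$ is positive precisely because of the ordering assumption $h_{p-1}<h_{p}=h$.
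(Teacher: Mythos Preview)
Your proof is correct and follows essentially the same strategy as the paper: use the semigroup law to reduce to a single operator, then exploit invertibility of $A_{p}$ to back-substitute along the delay equation and force $f=0$; for the converse, place a nonzero $f$ with values in $\ker A_{p}$ on a short interval near $-h$. The one notable difference is the choice of reduction target. The paper reduces to $T(\tau)$ for $\tau\in(0,H]$ with $H=h-h_{p-1}$; because $\tau\leq H$, a \emph{single} application of the identity $f(t-h)=-A_{p}^{-1}\sum_{i=1}^{p-1}A_{i}x(t-h_{i})$ already covers the remaining interval $[-h,\tau-h]$, so no iteration is needed. You instead reduce to $T(h)$, which is arguably the more natural fixed target but then forces you to run the back-substitution $\lceil h/(h-h_{p-1})\rceil$ times. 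Both work; the paper's reduction buys a one-step argument, while yours makes the semigroup reduction slightly simpler to state.
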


\begin{proof}
If $A_{p}$ is not invertible, we may choose $f\in L^{2}([-h,0],\mathbb{R}%
^{n})$ with $0\not =f(s)\in\ker A_{p}$ for $s\in\lbrack-h,-h_{p-1})$ and
$f(s)=0$ for $s\in\lbrack-h_{p-1},0]$. Then the solutions satisfy
$\psi(t,r,f,u)=\psi(t,r,0,u)$ for $t>0$ and hence $T(t)$ is not injective.

Conversely, suppose that $A_{p}$ is invertible. Let $H=h-h_{p-1}=h_{p}%
-h_{p-1}$. Since for $t\geq0$ there are $k\in\{0,1,\ldots\}$ and $\tau
\in\lbrack0,H)$ with $t=\tau+kH$, it follows that $T(t)=T(\tau)\circ T(kH)$.
This shows that it suffices to prove injectivity of $T(\tau)$ for $\tau
\in(0,H]$. We have to prove that $\varphi(\tau,r,f,u)=\varphi(\tau,r^{\prime
},f^{\prime},u)$ implies $(r,f)=(r^{\prime},f^{\prime})$ or, equivalently,
that%
\begin{equation}
\psi(\tau+s,r,f,0)=0,s\in\lbrack-h,0]\text{, implies }r=0\text{ and }f=0.
\label{s}%
\end{equation}
Evaluation at $\tau+s=0$ shows that $r=0$. In order to show that $f=0$ in
$L^{2}([-h,0],\mathbb{R}^{n})$, note that, by (\ref{s}), $f(s)=0$ for
$s\in\lbrack\tau-h,0]$ holds.\ It remains to prove that $f(s)=0$ for (almost
all) $s\in\lbrack-h,\tau-h]$. We know that $x(t)=\psi(t,0,f,0)=0$ for
$t\in\lbrack0,\tau]$, hence it follows that for $t\in\lbrack0,\tau]$%
\[
f(t-h)=x(t-h)=A_{p}^{-1}\left[  \dot{x}(t)-\sum_{i=0}^{p-1}A_{i}%
x(t-h_{i})\right]  =-A_{p}^{-1}\sum_{i=1}^{p-1}A_{i}x(t-h_{i}).
\]
Since $\tau\in\lbrack0,H]=[0,h-h_{p-1}]$ it follows that $\tau-h\leq-h_{p-1}$.
Thus, it holds that. for all $i=1,\ldots,p-1$ and for $t\geq0$, that
$t-h_{i}\geq t-h_{p-1}\geq-h_{p-1}\geq\tau-h$ and hence $x(t-h_{i})=0$ for
$t-h\in\lbrack-h,\tau-h]$. It follows that%
\[
f(t-h)=x(t-h)=0\text{ for }t-h\in\lbrack-h,\tau-h],
\]
showing that $f(s)=0$ for $s\in\lbrack-h,\tau-h]$.
\end{proof}

\begin{remark}
Injectivity of the operators $T(t)$ is equivalent to the property that the
structural operator $F$ of the homogeneous delay differential equation
(\ref{hom}) has a trivial kernel; cf. Manitius \cite{Mani80}.
\end{remark}

\section{The control semiflow\label{Section4}}

This section constructs a continuous semiflow associated with the delay
control system (\ref{D1}) and analyzes some of its properties.

Endow the set $\mathcal{U}$ of controls with the following metric which is
compatible with the weak$^{\ast}$ topology on $L^{\infty}(\mathbb{R}%
,\mathbb{R}^{m})$:
\begin{equation}
d_{\mathcal{U}}(u,v)=\sum_{k=1}^{\infty}\frac{1}{2^{k}}\frac{\mid
\int_{\mathbb{R}}\left(  u(t)-v(t)\right)  ^{\top}z_{k}(t)dt\mid}{1+\mid
\int_{\mathbb{R}}\left(  u(t)-v(t)\right)  ^{\top}z_{k}(t)dt\mid},
\label{metric on U}%
\end{equation}
where $\left\{  z_{k},\;k\in\mathbb{N}\right\}  $ is a dense subset of
$L^{1}(\mathbb{R}\mathbf{,}\mathbb{R}^{m}).$ With this metric, $\mathcal{U}$
is a compact separable metric space and the right shift $\theta:\mathbb{R}%
\times\mathcal{U}\rightarrow\mathcal{U}:\theta_{t}u:=u(t+\cdot),t\in
\mathbb{R}$, is a continuous flow that is chain transitive on $\mathcal{U}$;
cf. Kawan \cite[Proposition 1.9]{Kawan13}. The map
\begin{equation}
\Phi:\mathbb{R}\times\mathcal{U}\times M_{2}\rightarrow\mathcal{U}\times
M_{2},\Phi_{t}\left(  u,y_{0}\right)  =\left(  \theta_{t}u,y(t)\right)
=\left(  \theta_{t}u,\varphi(t,y_{0},u)\right)  , \label{phi}%
\end{equation}
satisfies the semigroup properties $\Phi_{0}\left(  u,y_{0}\right)  =\left(
u,y_{0}\right)  $ and for $t,s\geq0$%
\begin{align*}
\Phi_{t+s}\left(  u,y_{0}\right)   &  =\left(  \theta_{t+s}u,\varphi
(t+s,y_{0},u)\right)  =\left(  \theta_{t}(\theta_{s}u),\varphi(t,\varphi
(s,y_{0},u),u(s+\cdot)\right) \\
&  =\Phi_{t}\circ\Phi_{s}\left(  u,y_{0}\right)  .
\end{align*}
We will show that $\Phi$ is a continuous semiflow, called a control semiflow.
Observe that $\Phi$ is not linear in the second argument, since the maps
$y_{0}\mapsto\varphi(t,y_{0},u)$ are affine; cf. (\ref{phi_aff}). The
homogeneous part of $\Phi$ is a product flow and we denote it by%
\begin{equation}
\Phi_{t}^{0}(u,y_{0})=(\theta_{t}u,\varphi(t,y_{0},0))=(\theta_{t}%
u,T(t)y_{0})\text{ for }t\geq0,u\in\mathcal{U},y_{0}\in M_{2}. \label{phi_0}%
\end{equation}
First we show properties of the homogeneous part $\Phi^{0}$.

\begin{proposition}
\label{Proposition_continuous}The homogeneous part $\Phi^{0}$ of the control
semiflow $\Phi:[0,\infty)\times\mathcal{U}\times M_{2}\rightarrow
\mathcal{U}\times M_{2}$ satisfies the following properties:

(i) The map $(t,u)\mapsto\Phi^{0}(t,u,\cdot):[0,\infty)\times\mathcal{U}%
\rightarrow\mathcal{U}\times\mathcal{L}(M_{2})$ is continuous with the strong
operator topology on the space $\mathcal{L}(M_{2})$ of bounded linear
operators on $M_{2}$.

(ii) For each fixed $t\geq0$, the map%
\[
\mathcal{U}\rightarrow\mathcal{U}\times\mathcal{L}(M_{2}):u\mapsto\Phi_{t}%
^{0}(u,\cdot)=(\theta_{t}u,T(t)),u\in\mathcal{U},
\]
is continuous with the operator norm topology on $\mathcal{L}(M_{2})$.

(iii) The semiflow $\Phi^{0}$ is continuous.
\end{proposition}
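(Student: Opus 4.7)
The key observation that organizes the whole proof is that in the homogeneous part $\Phi^0_t(u,y_0)=(\theta_t u, T(t)y_0)$ the fiber operator $T(t)$ does not depend on the control $u$. Thus the hypotheses (H1)--(H4) from Section~\ref{Section2} become almost trivial to check for $\Phi^0$, and parts (i)--(iii) follow in the logical order (ii) $\Rightarrow$ (i) $\Rightarrow$ (iii) rather than as listed.

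My plan is to start with (ii). Since the fiber component $T(t)$ is constant in $u$, the map $u\mapsto (\theta_t u, T(t))$ is continuous in the operator norm on $\mathcal{L}(M_2)$ if and only if $u\mapsto \theta_t u$ is continuous on $\mathcal{U}$ with the metric $d_\mathcal{U}$ from~(\ref{metric on U}). Continuity of the shift flow on $\mathcal{U}$ is a standard consequence of the definition of $d_\mathcal{U}$ via the weak$^\ast$ topology; I would just cite Kawan~\cite[Proposition~1.9]{Kawan13}, which has already been invoked here for chain transitivity of $\theta$. For (i) I use the same reduction: I only have to show that $t\mapsto T(t)$ is strongly continuous on $M_2$ and that $(t,u)\mapsto \theta_t u$ is jointly continuous on $[0,\infty)\times \mathcal{U}$. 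Strong continuity of $T(t)$ is exactly the fact recalled in Section~\ref{Section3}, namely that $T(\cdot)$ is a $C_0$-semigroup generated by $A$; joint continuity of the shift is again standard. Combining these gives convergence $\Phi^0(t_k,u_k,\cdot)\to \Phi^0(t_0,u_0,\cdot)$ in the strong operator topology for $(t_k,u_k)\to (t_0,u_0)$.

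For (iii) the cleanest route is to appeal to Proposition~\ref{Proposition_cont1} with $B=\mathcal{U}$, base flow $\theta$ and $Y=M_2$. I need to verify (H1)--(H4). (H1) is immediate from the product structure. (H2) holds because $T(t)$ is bounded and, by the standing injectivity assumption (Proposition~\ref{Proposition_injective1} with $A_p$ invertible), each $T(t)$ is injective. (H3) is trivial since $u\mapsto T(t)$ is constant, so continuity in the norm topology is automatic. (H4) is precisely the strong-continuity statement established in (i). With all four hypotheses in place, Proposition~\ref{Proposition_cont1} yields continuity of $\Phi^0$ in the product metric $d_{\mathcal{U}\times M_2}((u_1,y_1),(u_2,y_2))=\max(d_\mathcal{U}(u_1,u_2),\|y_1-y_2\|_{M_2})$.

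There is no real obstacle here: all three parts are essentially bookkeeping built on (a) the $C_0$-property of the delay semigroup $T(t)$ recalled in Section~\ref{Section3}, (b) continuity of the shift on $(\mathcal{U},d_\mathcal{U})$, and (c) the uniform boundedness argument already carried out in the proof of Proposition~\ref{Proposition_cont1}. The only mild subtlety worth flagging is that, because the fiber map is $u$-independent, the distinction between (i) and (ii)---strong versus norm operator topology---collapses on the $\mathcal{L}(M_2)$ factor, and the content of the proposition really lies in strong continuity of $T(t)$ together with joint continuity of $\theta$; this is what makes the later, genuinely $u$-dependent control semiflow $\Phi$ reduce to a manageable perturbation of $\Phi^0$ in subsequent arguments.
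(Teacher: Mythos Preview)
Your approach is essentially identical to the paper's: both use that the fiber operator $T(t)$ is independent of $u$, reduce (i) and (ii) to continuity of the shift $\theta$ plus strong continuity of the $C_0$-semigroup $T(\cdot)$, and then invoke Proposition~\ref{Proposition_cont1} for (iii). One small correction: at this point in the paper (Section~\ref{Section4}) the invertibility of $A_p$ is \emph{not} yet a standing assumption---that only begins in Section~\ref{Section5}---so you cannot appeal to injectivity of $T(t)$ to verify (H2); the paper handles this exactly as you would expect, by observing that Proposition~\ref{Proposition_cont1} does not actually use the injectivity part of (H2), so (iii) follows regardless.
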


\begin{proof}
(i) Let $y\in M_{2}$. We have to show that, for a convergent sequence
$(t_{k},u^{k})\rightarrow(t_{0},u^{0})$, it follows that%
\[
\Phi_{t_{k}}^{0}(u^{k},y)=(\theta_{t_{k}}u^{k},T(t_{k})y))\rightarrow
\Phi_{t_{0}}^{0}(u^{0},y)=(\theta_{t_{0}}u^{0},T(t_{0})y).
\]
Convergence in the first component holds by continuity of the right shift
$\theta$, and $T(t_{k})y$ converges to $T(t^{0})y$ by strong continuity of the
semigroup $T(t),t\geq0$.\ 

(ii) Convergence in the first component follows as in (i) and, by
(\ref{phi_0}), $\left\Vert \Phi_{t}^{0}(u,\cdot)\right\Vert =\left\Vert
T(t)\right\Vert $ does not depend on the control $u$, hence continuity
trivially holds.

(iii) Due to assertions (i) and (ii) hypotheses (H1)-(H4) hold for $\Phi^{0}$,
except for injectivity of $\Phi(t,u,\cdot)$. Since Proposition
\ref{Proposition_cont1} does not need the injectivity assumption, this implies
that $\Phi^{0}$ is continuous.
\end{proof}

The following theorem establishes continuity of the affine control semiflow
$\Phi$.

\begin{theorem}
\label{Theorem_continuous}The control semiflow $\Phi:[0,\infty)\times
\mathcal{U}\times M_{2}\rightarrow\mathcal{U}\times M_{2}$ defined in
(\ref{phi}) satisfies the following properties:

(i) For every $y\in M_{2}$, the map $(t,u)\mapsto\Phi(t,u,y)=(\theta
_{t}u,\varphi(t,y,u)):[0,\infty)\times\mathcal{U}\rightarrow\mathcal{U}\times
M_{2}$ is continuous.

(ii) Let $t\geq0$. Then $u^{k}\rightarrow u^{0}$ in $\mathcal{U}$ implies that%
\[
\sup_{\left\Vert y\right\Vert \leq1}\left\Vert \varphi(t,y,u^{k}%
)-\varphi(t,y,u^{0})\right\Vert \rightarrow0.
\]

(iii) The semiflow $\Phi$ is continuous.
\end{theorem}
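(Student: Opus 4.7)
The plan is to exploit the affine decomposition $\varphi(t,y_0,u)=T(t)y_0+\varphi(t,0,u)$ recorded in (\ref{phi_aff}), which splits $\Phi$ into the homogeneous part $\Phi^0$ already treated in Proposition \ref{Proposition_continuous} plus a control-dependent correction $(t,u)\mapsto\varphi(t,0,u)$ that is independent of $y_0$. In this way the whole theorem reduces to joint continuity of this correction, which I would prove first, and then assemble (i)--(iii) from it.

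For (i), I would represent $\varphi(t,0,u)$ in both coordinates via the variation-of-parameters formula (\ref{VdP}): the $\mathbb{R}^n$-component is
\[
\psi(t,0,0,u)=\int_0^t X(t-s)\sum_{i=0}^p B_i u(s-h_i)\,ds,
\]
while the $L^2$-component is the shifted trajectory $\psi_t(\cdot,0,0,u)$. For $(t_k,u^k)\to(t_0,u^0)$, I would handle the first component by splitting the difference into two pieces: the contribution from $t_k\neq t_0$ is controlled by boundedness of $X$ on compact sets together with the uniform $L^\infty$-bound on the $u^k$ coming from compactness of $\Omega$; the remaining piece has the form $\int_0^{t_0} g(s)(u^k-u^0)(s-h_i)\,ds$, where, after the change of variable $s\mapsto s+h_i$, the weight $g$ is bounded with compact support, hence lies in $L^1(\mathbb{R},\mathbb{R}^{n\times m})$, so weak$^\ast$ convergence of $u^k$ in $\mathcal{U}$ forces this term to zero. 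For the $L^2$-component, the pointwise convergence $\psi(t_k+s,0,0,u^k)\to\psi(t_0+s,0,0,u^0)$ for each $s\in[-h,0]$ follows from the first-component analysis, and I would close with dominated convergence; the required uniform $L^\infty$-bound on $\{\psi(\cdot,0,0,u^k)\}$ over compact time intervals is provided by applying the estimate (\ref{CZ_3.67}) to (\ref{x_inhom}), using again the uniform bound on the $u^k$.

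For (ii), the key observation is that the affine decomposition yields $\varphi(t,y,u^k)-\varphi(t,y,u^0)=\varphi(t,0,u^k)-\varphi(t,0,u^0)$, independently of $y$; hence the supremum over $\|y\|\leq 1$ collapses to $\|\varphi(t,0,u^k)-\varphi(t,0,u^0)\|$, which tends to zero by (i). For (iii), given $(t_k,u^k,y^k)\to(t_0,u^0,y^0)$, the first component converges by continuity of the shift, $T(t_k)y^k\to T(t_0)y^0$ by Proposition \ref{Proposition_continuous}(iii) applied to $\Phi^0$, and $\varphi(t_k,0,u^k)\to\varphi(t_0,0,u^0)$ by (i); summing yields joint continuity of $\Phi$.

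The main obstacle I expect is the passage from pointwise convergence of $\psi(\tau,0,0,u^k)$ at each $\tau$ to genuine $L^2$-convergence of the history segment, since this requires reconciling the weak$^\ast$ convergence in $\mathcal{U}$, a purely distributional notion, with the strong $M_2$-topology. This is precisely where the compactness of the control range $\Omega$, through the uniform $L^\infty$-bound on $\{u^k\}$ and the a priori estimate (\ref{CZ_3.67}) for (\ref{x_inhom}), becomes essential to justify the dominated convergence step.
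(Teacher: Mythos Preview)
Your proposal is correct and follows essentially the same architecture as the paper: the affine splitting (\ref{phi_aff}) reduces everything to joint continuity of $(t,u)\mapsto\varphi(t,0,u)$, which is then established through the variation-of-parameters formula (\ref{VdP}), a change of variable, and weak$^\ast$ convergence of the controls; parts (ii) and (iii) are handled identically.

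The one genuine technical difference is in how you pass to strong $M_2$-convergence of the history segment. The paper first proves equicontinuity of the family $t\mapsto\psi(t,0,0,u^k)$ on $[0,\tau]$, invokes Arzel\`a--Ascoli to extract a uniformly convergent subsequence, identifies the limit via weak$^\ast$ convergence, and then passes from $C([-h,0],\mathbb{R}^n)$ to $L^2$. You instead propose pointwise convergence of $\psi(t_k+s,0,0,u^k)$ for each fixed $s$ together with dominated convergence. Both work; the paper's route gives the slightly stronger uniform convergence on compact intervals, while yours is more direct for the $L^2$ target. One small slip: the estimate (\ref{CZ_3.67}) is stated for the \emph{homogeneous} equation and yields nothing for initial data $(0,0)$; the uniform $L^\infty$-bound you need comes instead directly from (\ref{VdP}) and compactness of $\Omega$, exactly as the paper obtains its constant $c_0$.
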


\begin{proof}
(i) Let $y\in M_{2}$. We have to show that for a convergent sequence
$(t_{k},u^{k})\rightarrow(t_{0},u^{0})$ it follows that%
\[
\Phi_{t_{k}}(u^{k},y)=(\theta_{t_{k}}u^{k},\varphi(t_{k},y,u^{k}%
))\rightarrow\Phi_{t_{0}}(u^{0},y)=(\theta_{t_{0}}u^{0},\varphi(t_{0}%
,y,u^{0})).
\]
Convergence in the first component holds by continuity of the right shift
$\theta$. Concerning the second component, formula (\ref{mild}) shows that,%
\begin{align*}
\varphi(t_{k},y,u^{k})  &  =T(t_{k})y+\int_{0}^{t_{k}}T(t_{k}-s)B\left(
u^{k}(s),u^{k}(s-h_{1}),\ldots,u^{k}(s-h_{p})\right)  ds\\
&  =\varphi(t_{k},y,0)+\varphi(t_{k},0,u^{k}).
\end{align*}
By Proposition \ref{Proposition_continuous}(i) it follows that%
\[
(\theta_{t_{k}}u^{k},\varphi(t_{k},y,0))\rightarrow\Phi_{t_{0}}^{0}%
(u^{0},y)=(\theta_{t_{0}}u^{0},\varphi(t_{0},y,0)).
\]
The other summand in the second component is $\varphi(t_{k},0,u^{k}%
)\allowbreak=(x^{k}(t_{k}),x_{t_{k}}^{k})$, where $x^{k}(t)=\psi
(t,0,0,u^{k}),t\geq-h$ is the solution of%
\[
\dot{x}^{k}(t)=A_{0}x^{k}(t)+\sum_{i=1}^{p}A_{i}x^{k}(t-h_{i})+\sum_{i=0}%
^{p}B_{i}u^{k}(t-h_{i}),t\geq0,
\]
with\ initial condition $x^{k}(t)=0$ and $u^{k}(t)=0$ for $t\in\lbrack-h,0]$.

Let $\tau>0$. We claim that $\psi(t,0,0,u^{k})\rightarrow\psi(t,0,0,u^{0})$
uniformly for $t\in\lbrack0,\tau]$.

The variation-of-parameters formula (\ref{VdP}) yields%
\[
\psi(t,0,0,u^{k})=\int_{0}^{t}X(t-s)\sum_{i=0}^{p}B_{i}u^{k}(s-h_{i})ds.
\]
Since $\Omega$ is compact there exists $c_{0}>0$ such that $\left\Vert
\psi(t,0,0,u^{k})\right\Vert \leq c_{0},t\in\lbrack0,\tau]$ for all $k$.
Furthermore, for $t_{1}<t_{2}$ in $[0,\tau]$,%
\begin{align*}
&  \left\Vert \psi(t_{2},0,0,u^{k})-\psi(t_{1},0,0,u^{k})\right\Vert \leq
\int_{t_{1}}^{t_{2}}\left[  \left\Vert X(t-s)\right\Vert \sum_{i=0}%
^{p}\left\Vert B_{i}\right\Vert \left\Vert u^{k}(s-h_{i})\right\Vert \right]
ds\\
&  \leq(t_{2}-t_{1})(p+1)\max_{s\in\lbrack0,\tau]}\left\Vert X(s)\right\Vert
\max_{i}\left\Vert B_{i}\right\Vert \max_{u\in\Omega}\left\Vert u\right\Vert .
\end{align*}
Thus, the functions $x^{k}(t)=\psi(t,0,0,u^{k}),t\in\lbrack0,\tau]$, are
equicontinuous and bounded and hence, by the Arzela-Ascoli theorem, a
uniformly converging subsequence exists.

Taking into account $u(s)=0$ for $s\leq0$, weak$^{\ast}$ convergence of
$u^{k}$ to $u^{0}$ implies that, for $t\in\lbrack0,\tau]$ and $i=0,1,\ldots
,p$,%
\begin{align*}
&  \int_{0}^{t}X(t-s)B_{i}u^{k}(s-h_{i})ds\\
&  =\int_{-h_{i}}^{t-h_{i}}X(t-s-h_{i})B_{i}u^{k}(s)ds=\int_{0}^{\infty}%
\chi_{\lbrack-h_{i},t-h_{i}]}(s)X(t-s-h_{i})B_{i}u^{k}(s)ds\\
&  \longrightarrow\int_{0}^{\infty}\chi_{\lbrack-h_{i},t-h_{i}]}%
(s)X(t-s-h_{i})B_{i}u^{0}(s)ds=\int_{0}^{t}X(t-s)B_{i}u^{0}(s-h_{i})ds.
\end{align*}
It follows that $x^{k}(t)=\psi(t,0,0,u^{k})$ converges to $x^{0}%
(t)=\psi(t,0,0,u^{0})$. By the preceding argument, this convergence is uniform
for $t\in\lbrack0,\tau]$.

We conclude that $x_{t_{k}}^{k}$ converges to $x_{t^{0}}^{0}$ in
$C([-h,0],\mathbb{R}^{n})$. This implies convergence of $(x^{k}(t_{k}%
),x_{t_{k}}^{k})$ to $(x^{0}(t^{0}),x_{t_{0}}^{0})$ in $M_{2}$ since the
embedding of $C([-h,0],\mathbb{R}^{n})$ into $M_{2}$ is continuous.

(ii) Let $t>0$ and consider $u^{k}\rightarrow u^{0}$ in $\mathcal{U}$. By
formula (\ref{phi_aff}) it follows that%
\[
\sup\left\{  \left\Vert \varphi(t,y,u^{k})-\varphi(t,y,u^{0})\right\Vert
\left\vert \left\Vert y\right\Vert \leq1\right.  \right\}  =\left\Vert
\varphi(t,0,u^{k})-\varphi(t,0,u^{0})\right\Vert .
\]
Thus, this does not depend on $y$, and assertion (i) implies that the right
hand side converges to $0$ for $k\rightarrow\infty$, as claimed.

(iii) For a sequence $(t_{k},u^{k},y_{k})\rightarrow(t_{0},u^{0},y_{0})$ in
$[0,\infty)\times\mathcal{U}\times M_{2}$, one obtains that%
\begin{align*}
&  \left\Vert \varphi(t_{k},y_{k},u^{k})-\varphi(t_{0},y_{0},u^{0})\right\Vert
\\
&  \leq\left\Vert \varphi(t_{k},y_{k},u^{k})-\varphi(t_{k},y_{0}%
,u^{k})\right\Vert +\left\Vert \varphi(t_{k},y_{0},u^{k})-\varphi(t_{0}%
,y_{0},u^{0})\right\Vert \\
&  =\left\Vert \varphi(t,y_{k},0)-\varphi(t,y_{0},0)\right\Vert +\left\Vert
\varphi(t_{k},0,u^{k})-\varphi(t_{0},0,u^{0})\right\Vert .
\end{align*}
The first summand equals $\left\Vert T(t)(y_{k}-y_{0})\right\Vert \leq$
$\sup_{k}\left\Vert T(t)\right\Vert \left\Vert y_{k}-y_{0}\right\Vert $ and
hence converges to $0$. The second summand converges to $0$ by (i).
\end{proof}

The following proposition characterizes injectivity of the semiflow associated
with linear delay control systems.

\begin{proposition}
\label{Proposition_injective2}Consider the linear delay control system
(\ref{D1}). Then for all $t>0$ the maps $\Phi_{t}$ on $\mathcal{U}\times
M_{2}$ are injective if and only if the matrix $A_{p}$ is invertible.
\end{proposition}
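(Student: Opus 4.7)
The plan is to reduce injectivity of $\Phi_t$ on $\mathcal{U}\times M_2$ to injectivity of the linear semigroup $T(t)$ on $M_2$, and then appeal to Proposition \ref{Proposition_injective1}. The reduction is immediate from the semiflow formula (\ref{phi}) together with the affine splitting (\ref{phi_aff}).

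First I would observe that the shift $\theta_t:\mathcal{U}\to\mathcal{U}$ is itself injective for every $t\in\mathbb{R}$: if $(\theta_t u)(s)=u(t+s)$ agrees with $(\theta_t v)(s)=v(t+s)$ for almost every $s\in\mathbb{R}$, then $u=v$ in $L^\infty(\mathbb{R},\mathbb{R}^m)$. Hence whenever $\Phi_t(u,y_0)=\Phi_t(v,z_0)$, comparing the $\mathcal{U}$-component forces $u=v$, and the $M_2$-component then reduces to $\varphi(t,y_0,u)=\varphi(t,z_0,u)$. Using (\ref{phi_aff}) the difference satisfies $\varphi(t,y_0,u)-\varphi(t,z_0,u)=T(t)(y_0-z_0)$, so this equality is equivalent to $T(t)(y_0-z_0)=0$. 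Consequently $\Phi_t$ is injective on $\mathcal{U}\times M_2$ if and only if $T(t)$ is injective on $M_2$.

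With the reduction in hand, Proposition \ref{Proposition_injective1} supplies both implications: the operators $T(t)$ are injective for every $t\geq0$, equivalently for every $t>0$ since $T(0)=\mathrm{Id}$ is trivially injective, precisely when $A_p$ is invertible.

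There is essentially no obstacle here; the only point that deserves to be stated explicitly is that $\mathcal{U}$ consists of functions defined on all of $\mathbb{R}$, so the right shift $\theta_t$ is a bijection of $\mathcal{U}$ and contributes no injectivity issue of its own. Once that is noted, the proof is a one-line application of the affine decomposition (\ref{phi_aff}) followed by a citation of Proposition \ref{Proposition_injective1}.
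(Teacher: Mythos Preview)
Your proof is correct and follows essentially the same approach as the paper: reduce injectivity of $\Phi_t$ to injectivity of $T(t)$ via the injectivity of the shift on $\mathcal{U}$ and the affine decomposition (\ref{phi_aff}), then invoke Proposition~\ref{Proposition_injective1}. The paper phrases the intermediate step as ``$\Phi_t(u,\cdot)$ is injective if and only if $\Phi_t(0,\cdot)=T(t)$ is injective'' without spelling out the affine splitting, but the content is identical.
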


\begin{proof}
First we note that for $t>0$ the map $\Phi_{t}$ is injective if and only if
the maps $\Phi_{t}(u,\cdot),u\in\mathcal{U}$, are injective. In fact, suppose
that for all $u\in\mathcal{U}$ the map $\Phi_{t}(u,\cdot)$ is injective. Then
$\Phi_{t}(u,y)=\Phi_{t}(u^{\prime},y^{\prime})$ implies $u(t+\cdot)=u^{\prime
}\left(  t+\cdot\right)  $, hence $u=u^{\prime}$ implying $y=y^{\prime}$. This
shows that $\Phi_{t}$ is injective on $\mathcal{U}\times M_{2}$. The converse
holds trivially. Furthermore, the map $\Phi_{t}(u,\cdot)$ is injective if and
only if $\Phi_{t}(0,\cdot)=T(t)$ is injective. By Proposition
\ref{Proposition_injective1} this holds if and only if $\det A_{p}\not =0$.
\end{proof}

Theorem \ref{Theorem_Selgrade_hom} on spectral theory of delay equations
entails the following consequences for the homogeneous part $\Phi^{0}$ of
$\Phi$.

\begin{proposition}
\label{Proposition_Selgrade_hom}Consider the linear flow $\Phi^{0}$ on
$\mathcal{U}\times M_{2}$ given by (\ref{phi_0}) and assume that $\det
A_{p}\not =0$. Then the sequence $\{\mathcal{A}_{i}^{0}\}_{i=0}^{\mathbb{N}%
}:=\{\mathcal{U}\times A_{i}^{0}\}_{i=0}^{\mathbb{N}}$ of subsets of
$\mathcal{U}\times\mathbb{P}M_{2}$ has the following properties.

(i) For any $i\in\mathbb{N}$ the set $\mathcal{A}_{i}^{0}$ is an
asymptotically compact attractor for the projectivized semiflow $\mathbb{P}%
\Phi^{0}$ on $\mathcal{U}\times\mathbb{P}M_{2}$.

(ii) If $\mathcal{A}^{0}$ is any nonempty asymptotically compact attractor for
$\mathbb{P}\Phi^{0}$, then $\mathcal{A}^{0}=\mathcal{A}_{i}^{0}$ for some
$i\in\mathbb{N}$.

(iii) For $\mathcal{V}_{i}^{0,+}:=\mathbb{P}^{-1}\mathcal{A}_{i}%
^{0}=\mathcal{U}\times\mathbb{P}^{-1}A_{i}^{0},i\in\mathbb{N}$ the subbundles
$\mathcal{V}_{i}^{0,-}:=\mathcal{U}\times V_{i}^{-}$ yield an exponentially
separated splitting $\mathcal{U}\times M_{2}=\mathcal{V}_{i}^{0,+}%
\oplus\mathcal{V}_{i}^{0,-}$.

(iv) The subbundles $\mathcal{V}_{i}^{0}:=\mathcal{V}_{i}^{0,+}\cap
\mathcal{V}_{i-1}^{0,-}=\mathcal{U}\times\left(  V_{i}^{+}\cap V_{i-1}%
^{-}\right)  $ are finite dimensional, invariant subbundles of $\mathcal{U}%
\times M_{2}$ such that%
\[
\mathcal{V}_{i}^{0,+}=\mathcal{V}_{1}^{0}\oplus\cdots\oplus\mathcal{V}_{i}%
^{0}.
\]

(v) The sets $\mathcal{M}_{i}^{0}=\mathbb{P}\mathcal{V}_{i}^{0}$ are maximal
chain transitive for the projectivized flow $\mathbb{P}\Phi^{0}$ restricted to
$\mathbb{P}\mathcal{V}_{i}^{0,+}$.
\end{proposition}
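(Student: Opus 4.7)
The plan is to apply Theorem \ref{Theorem_Selgrade1} to the linear semiflow $\Phi^{0}$ on the Banach bundle $\mathcal{U}\times M_{2}$ over the base flow $(\mathcal{U},\theta)$, and then to identify the resulting objects with the explicit product sets $\mathcal{U}\times A_{i}^{0}$, $\mathcal{U}\times V_{i}^{\pm}$, and $\mathcal{U}\times V_{i}$ coming from Theorem \ref{Theorem_Selgrade_hom}, exploiting that $\Phi^{0}_{t}(u,y_{0})=(\theta_{t}u,T(t)y_{0})$ is a product flow whose fiber dynamics $T(t)$ does not depend on $u\in\mathcal{U}$.

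First I would verify the hypotheses of Theorem \ref{Theorem_Selgrade1}. The base $\mathcal{U}$ is compact metric for (\ref{metric on U}), the shift $\theta$ is chain transitive on $\mathcal{U}$ by Kawan \cite[Proposition 1.9]{Kawan13}, and $M_{2}$ is a separable Hilbert space. Hypothesis (H1) is immediate from (\ref{phi_0}); hypothesis (H2) (injectivity of each fiber operator) follows from Proposition \ref{Proposition_injective1}, using $\det A_{p}\neq 0$; and (H3), (H4) are exactly items (ii) and (i) of Proposition \ref{Proposition_continuous}. Theorem \ref{Theorem_Selgrade1} then produces an at-most countable sequence of asymptotically compact attractors $\{\widetilde{\mathcal{A}}_{j}\}$ of $\mathbb{P}\Phi^{0}$, together with exponentially separated finite-dimensional invariant subbundles and maximal chain transitive sets with the properties listed there.

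To match this abstract output with the explicit statement I would proceed in two steps. First, each $\mathcal{U}\times A_{i}^{0}$ is forward invariant, and both the asymptotic compactness condition and the identity $\omega(B_{\varepsilon}(\mathcal{U}\times A_{i}^{0}))=\mathcal{U}\times A_{i}^{0}$ transfer fiberwise from $A_{i}^{0}\subset\mathbb{P}M_{2}$ (Theorem \ref{Theorem_Selgrade_hom}(i)), using compactness of $\mathcal{U}$ and independence of $T(t)$ from $u$. Hence $\mathcal{U}\times A_{i}^{0}$ is an asymptotically compact attractor for $\mathbb{P}\Phi^{0}$, so the ``finest such collection'' statement in Theorem \ref{Theorem_Selgrade1}(ii) shows it coincides with some $\widetilde{\mathcal{A}}_{j}$. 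Second, for the reverse direction, I would argue that any $\widetilde{\mathcal{A}}_{j}$ is itself of product form $\mathcal{U}\times A$ with $A$ an asymptotically compact attractor for $\mathbb{P}T(\cdot)$: the product structure of the flow, combined with chain transitivity of $\theta$ on $\mathcal{U}$, forces the projection of $\widetilde{\mathcal{A}}_{j}$ to $\mathcal{U}$ to be all of $\mathcal{U}$ and each fiber to equal the same attractor of the common fiber flow. Theorem \ref{Theorem_Selgrade_hom}(ii) then identifies this $A$ with some $A_{i}^{0}$, establishing assertions (i) and (ii).

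With (i) and (ii) in hand, (iii)--(v) follow immediately: the exponentially separated subbundles from Theorem \ref{Theorem_Selgrade1}(iii) take the form $\mathcal{V}_{i}^{0,\pm}=\mathcal{U}\times V_{i}^{\pm}$, with the separation estimates holding fiberwise uniformly in $u$ since $T(t)$ is $u$-independent; the finite dimensional intersections are $\mathcal{U}\times V_{i}$; and the maximal chain transitivity in (v) is a direct transcription of Theorem \ref{Theorem_Selgrade1}(v). The only real obstacle I anticipate is the product-form step of the identification argument, i.e., showing that an asymptotically compact attractor of a product semiflow whose base is chain transitive must itself be of product form; this is a base-propagation consequence of chain transitivity of $\theta$ together with the $u$-independence of the fiber flow.
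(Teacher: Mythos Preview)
Your proposal is correct and follows essentially the same approach as the paper: verify the hypotheses of Theorem \ref{Theorem_Selgrade1} for $\Phi^{0}$ via Propositions \ref{Proposition_injective1}/\ref{Proposition_injective2} and \ref{Proposition_continuous}, transfer the attractor properties of the $A_{i}^{0}$ from Theorem \ref{Theorem_Selgrade_hom} to the products $\mathcal{U}\times A_{i}^{0}$ using the $u$-independence of the fiber dynamics, and read off (iii)--(v) from the definitions and Theorem \ref{Theorem_Selgrade1}(v). The only organizational difference is that the paper verifies (i)--(iv) directly and invokes Theorem \ref{Theorem_Selgrade1} only at the end for (v), whereas you apply the theorem first and then identify the abstract output with the explicit product sets; in particular, the product-form step you flag for (ii) is exactly what the paper compresses into the phrase ``analogously derived from the corresponding property of the semiflow $T(\cdot)$''.
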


\begin{proof}
By Proposition \ref{Proposition_injective2}, the map $\Phi_{t}^{0}(u,\cdot)$
on $M_{2}$ is injective for every $u\in\mathcal{U}$ since $\det A_{p}\not =0$.
For the proof of assertion (i), observe that by Theorem
\ref{Theorem_Selgrade_hom}(ii) the sets $A_{i}^{0}$ are asymptotically compact
attractors for $\mathbb{P}T(\cdot)$. Hence, for $\varepsilon>0$ small enough,
the neighborhood $B_{\varepsilon}(A_{i}^{0})$ of $A_{i}^{0}$ satisfies

(a) $\overline{\mathbb{P}T([S,\infty)\times B_{\varepsilon}(A_{i}^{0}%
))}\subset B_{\varepsilon}(A_{i}^{0})$ and $A_{i}^{0}=\omega(B_{\varepsilon
}(A_{i}^{0})$;

(b) for any sequence $t_{k}\rightarrow\infty$ and any sequence of points
$y_{k}\in B_{\varepsilon}(A_{i}^{0})$, it follows that $\mathbb{P}%
T(t_{k})(y_{k}),k\in\mathbb{N}$ has a convergent subsequence.

It is easily seen that the neighborhood $\mathcal{U}\times B_{\varepsilon
}(A_{i}^{0})$ satisfies the analogous conditions for $\mathbb{P}\Phi^{0}$
instead of $T(\cdot)$. This proves (i). Assertion (ii) is analogously derived
from the corresponding property of the semiflow $T(\cdot)$. Assertions (iii)
and (iv) are easily seen using the definitions.

Together we have shown that one obtains a discrete Selgrade decomposition of
$\mathbb{P}\Phi^{0}$ restricted to $\mathcal{U}\times M_{2}$. Thus Theorem
\ref{Theorem_Selgrade1}(v) implies assertion (v).
\end{proof}

\section{The chain control set\label{Section5}}

In this section we define chain control sets and show that they correspond to
maximal chain transitive subsets of the control semiflow. Furthermore, we
prove that there exists a unique chain control set $E$ in $M_{2}$. Throughout
the rest of this paper, we assume that the matrix $A_{p}$ is invertible and
hence, by Proposition \ref{Proposition_injective2}, the maps $\Phi_{t}$ are
injective on $\mathcal{U}\times M_{2}$.

As in Section \ref{Section2} we write $\Phi_{-t}(u,y)\in\mathcal{U}\times Y$
for the pre-image $\left(  \Phi_{t}\right)  ^{-1}(u,y)$. Since the shift
$\theta_{t}$ is defined for all $t\in\mathbb{R}$, the pre-image exists if and
only if the pre-image of $y$ under the map $\varphi(t,\cdot,u)$ denoted by
$\varphi(-t,y,u)$ exists. When we write $\Phi_{-t}(u,y)$ or $\varphi(-t,y,u)$
we tacitly suppose that these pre-images exist.

Fix $y,z\in M_{2}$ and let $\varepsilon,\tau>0$. A controlled $(\varepsilon
,\tau)$\textit{-}chain $\zeta$ from $y$ to $z$ is given by $q\in
\mathbb{N},\ y_{0}=y,y_{1},\ldots,y_{q}=z$ in $M_{2},\ u_{0},\ldots,u_{q-1}%
\in\mathcal{U}$, and $\tau_{0},\ldots,\tau_{q-1}\geq\tau$ with
\[
\left\Vert \varphi(\tau_{j},y_{j},u_{j})-y_{j+1}\right\Vert <\varepsilon\text{
}\,\text{for }j=0,\ldots,q-1.
\]

\begin{definition}
\label{Definition_chain}A nonvoid set $E\subset M_{2}$ is chain controllable,
if for all $y,z\in E$ and all $\varepsilon,\tau>0$ there are controlled
$(\varepsilon,\tau)$-chains from $y$ to $z$. A set $E\subset M_{2}$ is weakly
invariant, if for every $y\in E$ there exists $u\in\mathcal{U}$ such that for
all $t\in\mathbb{R}$ one has $\varphi(t,y,u)\in E$. A weakly invariant chain
controllable set $E\subset M_{2}$ is a chain control set if it is maximal
(with respect to set inclusion) with these properties.
\end{definition}

For finite dimensional systems, the definition of chain control sets above
coincides with the standard definition of chain control sets (cf. Colonius and
Kliemann \cite{ColK00}, Kawan \cite{Kawan13}).

Chain control sets can be characterized using the control semiflow $\Phi$ on
$\mathcal{U}\times M_{2}$. The following theorem and its proof are
generalizations of an analogous result for finite dimensional systems; cf.
\cite[Theorem 4.3.11]{ColK00}. Recall from Section \ref{Section2} that any
invariant set for a semiflow consists of points defining entire solutions.

\begin{theorem}
\label{Theorem_equivalence}Consider the linear delay control system given by
(\ref{D1}) and assume that the matrix $A_{p}$ is invertible.

(i) If $E\subset M_{2}$ is a weakly invariant chain controllable set, then the
lift%
\[
\mathcal{E}:=\{(u,y)\in\mathcal{U}\times M_{2}\left\vert \forall
t\in\mathbb{R}:\varphi(t,y,u)\in E\right.  \}
\]
is an invariant chain transitive set for the control semiflow $\Phi$ and, in
particular, $\mathcal{E}$ is contained in the entire chain recurrent set
$\mathcal{R}^{\#}$ of $\Phi$.

(ii) Conversely, let $\mathcal{E}\subset\mathcal{U}\times M_{2}$ be an
invariant chain transitive set for $\Phi$. Then%
\[
\pi_{M_{2}}\mathcal{E}:=\left\{  y\in M_{2}\left\vert \exists u\in
\mathcal{U}:(u,y)\in\mathcal{E}\right.  \right\}
\]
is a weakly invariant chain controllable set.

(iii) For a chain control set $E$ the set $\mathcal{E}$ is a maximal invariant
chain transitive set of $\Phi$, and conversely, if $\mathcal{E}$ is a maximal
invariant chain transitive set, then $\pi_{M_{2}}\mathcal{E}$ is a chain
control set.
\end{theorem}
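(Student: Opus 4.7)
The plan is to reduce the equivalence to a concrete chain construction: given a controlled chain in $M_{2}$, lift it to a chain in $\mathcal{U}\times M_{2}$ by using a single concatenated control, and conversely project chains in $\mathcal{U}\times M_{2}$ to controlled chains in $M_{2}$. Maximality in (iii) then falls out of these two operations together with the fact that invariant subsets of $\mathcal{U}\times M_{2}$ are determined by their projections via a canonical lift.

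For part (i), invariance of $\mathcal{E}$ follows from the cocycle identity: for $(u,y)\in\mathcal{E}$ and $t\in\mathbb{R}$ one has
\[
\varphi(s,\varphi(t,y,u),\theta_{t}u)=\varphi(s+t,y,u)\in E\text{ for every }s\in\mathbb{R},
\]
so $\Phi_{t}(u,y)\in\mathcal{E}$. Since $\mathcal{E}$ is invariant each of its points defines an entire solution inside $\mathcal{E}$, so together with chain transitivity this places $\mathcal{E}$ in $\mathcal{R}^{\#}$. For chain transitivity, fix $(u,y),(u',y')\in\mathcal{E}$ and $\varepsilon,\tau>0$, and let $\varepsilon'$ be an interior tolerance to be chosen later. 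By chain controllability of $E$, take a controlled $(\varepsilon',\tau)$-chain $y=y_{0},\ldots,y_{q}=y'$ with controls $v_{j}\in\mathcal{U}$ and times $\tau_{j}\geq\tau$. Setting $s_{j}:=\sum_{i<j}\tau_{i}$ and $T:=s_{q}$, I would concatenate into a single $\bar{w}\in\mathcal{U}$ by $\bar{w}=u$ on $(-\infty,0)$, $\bar{w}(t)=v_{j}(t-s_{j})$ on $[s_{j},s_{j+1})$, and $\bar{w}(t)=u'(t-T)$ on $[T,\infty)$. The proposed chain in $\mathcal{U}\times M_{2}$ is $(u^{j},y_{j})_{j=0}^{q}$ with $u^{0}:=u$, $u^{q}:=u'$, and $u^{j}:=\theta_{s_{j}}\bar{w}$ for $1\leq j\leq q-1$. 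Interior transitions have exact $\mathcal{U}$-match because $\theta_{\tau_{j}}\theta_{s_{j}}\bar{w}=\theta_{s_{j+1}}\bar{w}$, and the $M_{2}$-estimate $\|\varphi(\tau_{j},y_{j},u^{j})-y_{j+1}\|<\varepsilon$ combines the chain estimate for $v_{j}$ with Theorem \ref{Theorem_continuous}(ii) to absorb the control-delay memory mismatch on $[-h,0]$ between $v_{j-1}$ and $v_{j}$ inside $\bar{w}$. The two endpoint transitions require separate $d_{\mathcal{U}}$-closeness arguments: since $u$ and $\bar{w}$ agree on $(-\infty,0)$, the difference $\theta_{\tau_{0}}u-\theta_{\tau_{0}}\bar{w}$ is supported in $[0,T]$ and, after choosing $\varepsilon'$ small and truncating the sum in (\ref{metric on U}) appropriately, lies within the $\varepsilon$-tolerance of $d_{\mathcal{U}}$; the symmetric argument handles the final step.

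Part (ii) is largely bookkeeping: weak invariance of $\pi_{M_{2}}\mathcal{E}$ follows from invariance of $\mathcal{E}$, since any witness $u$ with $(u,y)\in\mathcal{E}$ satisfies $\varphi(t,y,u)\in\pi_{M_{2}}\mathcal{E}$ for all $t\in\mathbb{R}$, and chain controllability is obtained by projecting an $(\varepsilon,\tau)$-chain $(u^{j},y^{j})$ given by chain transitivity of $\mathcal{E}$ to $M_{2}$, using its first coordinates as the chain controls. For part (iii) the key observation is that any invariant set $\mathcal{E}'\subset\mathcal{U}\times M_{2}$ is contained in the canonical lift $\{(u,y):\varphi(t,y,u)\in\pi_{M_{2}}\mathcal{E}'\text{ for all }t\in\mathbb{R}\}$ of its projection, because invariance forces $\varphi(t,y,u)\in\pi_{M_{2}}\mathcal{E}'$ whenever $(u,y)\in\mathcal{E}'$. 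Combined with (i) and (ii) this yields both directions: if $E$ is a chain control set then any invariant chain transitive $\mathcal{E}'\supseteq\mathcal{E}$ projects to a weakly invariant chain controllable set $\supseteq E$, hence to $E$ itself, hence $\mathcal{E}'\subseteq\mathcal{E}$; and conversely, a maximal invariant chain transitive $\mathcal{E}$ projects to a chain control set by applying (i) to any strictly larger weakly invariant chain controllable set and deriving a contradiction.

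The principal obstacle is the chain-transitivity construction in part (i): one must simultaneously ensure that the shifted controls $\theta_{\tau_{j}}u^{j}$ are close to $u^{j+1}$ in the weak$^{\ast}$ metric $d_{\mathcal{U}}$ and that the states $\varphi(\tau_{j},y_{j},u^{j})$ are close to $y_{j+1}$ in the $M_{2}$ norm, while handling the control-delay memory that couples consecutive chain pieces across a transition time and reconciling the witness controls $u,u'$ with the concatenation $\bar{w}$ at the two endpoints. The strong continuity in Theorem \ref{Theorem_continuous}(ii), which converts small weak$^{\ast}$ perturbations of the control into small norm perturbations of the state uniformly in $y$, together with the explicit series form of the metric (\ref{metric on U}) that keeps small-support modifications small, is what makes the balance between the interior and endpoint estimates work.
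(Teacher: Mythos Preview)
Your treatment of parts (ii) and (iii) is sound and matches the paper. The gap is in part (i), at the two endpoint transitions of your lifted chain.

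Consider the first step. You set $u^{0}:=u$, so the semiflow carries $(u,y)$ to $(\theta_{\tau_{0}}u,\varphi(\tau_{0},y,u))$, and you must compare this with $(u^{1},y_{1})=(\theta_{\tau_{0}}\bar w,y_{1})$. Two things fail. First, the state estimate: the controlled chain gives $\|\varphi(\tau_{0},y,v_{0})-y_{1}\|<\varepsilon'$, but your trajectory uses $u$, not $v_{0}$, on $[0,\tau_{0}]$; since $\varphi(\tau_{0},y,\cdot)$ depends on the control on $[0,\tau_{0}]$ and $u|_{[0,\tau_{0}]}$ bears no relation to $v_{0}|_{[0,\tau_{0}]}$, the difference $\varphi(\tau_{0},y,u)-\varphi(\tau_{0},y,v_{0})$ is uncontrolled. (Your invocation of Theorem~\ref{Theorem_continuous}(ii) and the ``memory mismatch on $[-h,0]$'' is a red herring: under the paper's convention $u(t)=0$ for $t\le 0$, the map $\varphi(\tau_{0},y,\cdot)$ depends only on the control on $[0,\tau_{0}]$, so there is nothing to absorb on $[-h,0]$, but there is a genuine mismatch on $[0,\tau_{0}]$.) Second, the control estimate: $u$ and $\bar w$ agree on $(-\infty,0)$ but differ on all of $[0,\infty)$ (not just on $[0,T]$, since $\bar w=u'(\cdot-T)$ on $[T,\infty)$ has no relation to $u$). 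After shifting by $\tau_{0}$ the difference is supported on $[-\tau_{0},\infty)$, which contains $[-S,S]$ no matter how large $\tau_{0}$ is; so the truncation argument for $d_{\mathcal U}$ cannot succeed. A symmetric failure (in the control coordinate) occurs at $j=q-1$, where $\theta_{T}\bar w$ and $u'$ agree on $[0,\infty)$ but differ on all of $(-\infty,0)$.

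The paper fixes this by exploiting weak invariance to insert buffer segments: instead of chaining from $y$ to $y'$, it chains from $\varphi(2\tau,y,u)\in E$ to $\varphi(-\tau,y',u')\in E$, and prepends two steps along the genuine $u$-trajectory of $y$ and appends one step along the genuine $u'$-trajectory to $y'$. The concatenated controls $v_{j}$ are then designed so that $v_{-2}=u$ and $v_{q+1}=u'$ exactly, while every pair $\theta_{\tau_{j}}v_{j}$ and $v_{j+1}$ agrees on an interval of length at least $\tau$ around $0$. Taking $\tau\ge S$ makes all the $d_{\mathcal U}$-jumps small via the tail estimate for (\ref{metric on U}), and the prepended and appended steps have zero jump in $M_{2}$ by construction. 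Your scheme becomes correct once you add these buffer steps; without them the chain simply does not start at $(u,y)$ and end at $(u',y')$ with small jumps.
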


\begin{proof}
(i) Let $(u,y),(v,z)\in\mathcal{E}$ and pick $\varepsilon,\,\tau>0.$ Recall
the definition of the metric $d_{\mathcal{U}}$ on $\mathcal{U}$ in
(\ref{metric on U}) and choose $K\in\mathbb{N}$ large enough such that
$\sum_{k=K+1}^{\infty}2^{-k}<\frac{\varepsilon}{2}$. For $z_{1},\ldots
,z_{K}\in L^{1}(\mathbb{R},\mathbb{R}^{m})$ we may take $S$ large enough such
that for all $i$%
\[
\int_{\mathbb{R}\setminus\left[  -S,S\right]  }\left\Vert z_{i}(\tau
)\right\Vert \,d\tau<\frac{\varepsilon}{2\mathrm{\,diam}\,U}.
\]
Chain controllability from $\varphi(2\tau,y,u)\in E$ to $\varphi(-\tau,z,v)\in
E$ yields the existence of $q\in\mathbb{N}$ and $y_{0},\ldots,y_{q}\in
M_{2},\,u_{0},\ldots,u_{q-1}\in\mathcal{U},\,\tau_{0},\ldots,\tau_{q-1}%
\geq\tau$ with $y_{0}=\varphi(2\tau,y,u),\,y_{q}=\varphi(-\tau,z,v)$, and
\[
\left\Vert \varphi(\tau_{j},y_{j},u_{j})-y_{j+1}\right\Vert <\varepsilon
\text{\ for }j=0,\ldots,q-1.
\]
We now construct an $(\varepsilon,\tau)$-chain from $(u,y)$ to $(v,z)$ in the
following way. Define
\[%
\begin{array}
[c]{cccc}%
\tau_{-2}=\tau, & y_{-2}=y, & v_{-2}=u, & \\
\tau_{-1}=\tau, & y_{-1}=\varphi(\tau,y,u), & v_{-1}(\tau)= & \left\{
\begin{array}
[c]{cc}%
u(\tau_{-2}+t) & \text{for }t\leq\tau_{-1}\\
u_{0}(t-\tau_{-1}) & \text{for }t>\tau_{-1}%
\end{array}
\right.
\end{array}
\]
and let the times $\tau_{0},\ldots,\tau_{q-1}$ and the points $y_{0}%
,...,y_{q}$ be as given earlier; furthermore, set $\tau_{q}=\tau
,y_{q+1}=z,v_{q+1}=v$, and define, for $j=0,\ldots,q-2$,%
\begin{align*}
v_{j}(t)  &  =\left\{
\begin{array}
[c]{lcl}%
v_{j-1}(\tau_{j-1}+t) & \text{for} & t\leq0\\
u_{j}(t) & \text{for} & 0<t<\tau_{j}\\
u_{j+1}(t-\tau_{j}) & \text{for} & t>\tau_{j},
\end{array}
\right. \\
v_{q-1}(t)  &  =\left\{
\begin{array}
[c]{lll}%
v_{q-2}(\tau_{q-2}+t) & \text{for} & t\leq0\\
u_{q-1}(t) & \text{for} & 0<t\leq\tau_{q-1}\\
v(t-\tau_{q-1}-\tau) & \text{for} & t>\tau_{q-1},
\end{array}
\right. \\
v_{q}(t)  &  =\left\{
\begin{array}
[c]{ll}%
v_{q-1}(\tau_{q-1}+t) & \text{for }t\leq0\\
v(t-\tau) & \text{for }t>0.
\end{array}
\right.
\end{align*}
It is easily seen that
\[
(v_{-2},y_{-2}),\,(v_{-1},y_{-1}),\ldots,(v_{q+1},y_{q+1})\text{ and }%
\tau_{-2},\,\tau_{-1},\ldots,\tau_{q}\geq\tau
\]
yield an $(\varepsilon,\tau)$-chain from $(u,y)$ to $(v,z)$ provided that
$d_{\mathcal{U}}(v_{j}(\tau_{j}+\cdot),v_{j+1})<\varepsilon$ for
$j=-2,-1,\ldots,q$. By choice of $S$ and $K$ one has for all $w_{1},\,w_{2}%
\in\mathcal{U}$ that the distance $d_{\mathcal{U}}(w_{1},w_{2})$ is bounded by%
\begin{align*}
&  \frac{\varepsilon}{2}+\sum_{k=1}^{K}\frac{1}{2^{k}}\left[  \left\Vert
\int_{\mathbb{R}\setminus\left[  -S,S\right]  }\left(  w_{1}(t)-w_{2}%
(t)\right)  ^{\top}z_{k}(t)\,dt\right\Vert +\left\Vert \int_{-S}^{S}\left(
w_{1}(t)-w_{2}(t)\right)  ^{\top}z_{k}(t)\,dt\right\Vert \right] \\
&  <\varepsilon+\max_{k=1,\ldots,K}\int_{-S}^{S}\left\Vert w_{1}%
(t)-w_{2}(t)\right\Vert \,\left\Vert z_{k}(t)\right\Vert \,dt.
\end{align*}
Hence it suffices to show that for all considered pairs of control functions
the integrands vanish. This is immediate from the definition of $v_{j}%
,\,j=-2,\ldots,q+1$.

(ii) Let $\mathcal{E}$ be an invariant chain transitive set in $\mathcal{U}%
\times M_{2}$. For $y\in\pi_{M_{2}}\mathcal{E}$ there exists $u\in\mathcal{U}$
such that $\varphi(t,y,u)\in\pi_{M_{2}}\mathcal{E}$ for all $t\in\mathbb{R}$
by invariance. Now let $y,z\in\pi_{M}\mathcal{E}$ and choose $\varepsilon
,\tau>0$. Then by chain transitivity of $\mathcal{E}$ we can choose
$y_{j},u_{j},\tau_{j}$ such that the corresponding trajectories satisfy the
required conditions.

(iii) It is clear that, for a chain control set $E$, the set $\mathcal{E}$ is
a maximal invariant chain transitive set. Conversely, for a maximal invariant
chain transitive set $\mathcal{E}$ the projection $\pi_{M_{2}}\mathcal{E}$ to
$M_{2}$ is chain controllable and weakly invariant. Since the maximality
property of $\pi_{M_{2}}\mathcal{E}$ is clear, the assertion follows by (ii).
\end{proof}

\begin{corollary}
Assume that the matrix $A_{p}$ is invertible. Then the following assertions
are equivalent:

(i) The entire chain recurrent set $\mathcal{R}^{\#}$ of the semiflow $\Phi$
is chain transitive.

(ii) The set $\mathcal{R}^{\#}$ is the lift $\mathcal{E}$ of a chain control
set $E.$

(iii) There is a single chain control set $E$.
\end{corollary}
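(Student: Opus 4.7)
My plan is to prove the equivalence by the cyclic implications (i) $\Rightarrow$ (ii) $\Rightarrow$ (iii) $\Rightarrow$ (i), leaning heavily on Theorem \ref{Theorem_equivalence}, which already establishes a bijection between chain control sets in $M_2$ and maximal invariant chain transitive sets of $\Phi$. Before starting the cycle I would record two structural facts about $\mathcal{R}^{\#}$. First, every invariant chain transitive set $\mathcal{F}\subset\mathcal{U}\times M_2$ is contained in $\mathcal{R}^{\#}$: each $x\in\mathcal{F}$ is chain recurrent (apply chain transitivity from $x$ to itself) and, by invariance, sits on an entire solution contained in $\mathcal{F}\subset\mathcal{R}$. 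Second, $\mathcal{R}^{\#}$ is itself invariant: forward invariance is clear, and if $\sigma:\mathbb{R}\to\mathcal{R}$ is the entire solution through $x\in\mathcal{R}^{\#}$, then each $\sigma(t)$ is chain recurrent and admits the shifted entire solution in $\mathcal{R}$.

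For (i) $\Rightarrow$ (ii), the set $\mathcal{R}^{\#}$ is an invariant chain transitive set by assumption and the second observation above, and it is maximal by the first observation. Theorem \ref{Theorem_equivalence}(iii) then produces a chain control set $E=\pi_{M_2}\mathcal{R}^{\#}$ whose lift is precisely $\mathcal{R}^{\#}$. For (ii) $\Rightarrow$ (iii), if $\mathcal{R}^{\#}=\mathcal{E}$ is the lift of some chain control set $E$, then for any other chain control set $E'$ Theorem \ref{Theorem_equivalence}(i) combined with the first observation yields $\mathcal{E}'\subset\mathcal{R}^{\#}=\mathcal{E}$, and maximality (Theorem \ref{Theorem_equivalence}(iii) applied to both lifts) forces $\mathcal{E}'=\mathcal{E}$, hence $E'=E$.

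The substantive direction is (iii) $\Rightarrow$ (i). Let $\mathcal{E}$ denote the lift of the unique chain control set $E$; by Theorem \ref{Theorem_equivalence}(iii), $\mathcal{E}$ is then the unique maximal invariant chain transitive subset of $\mathcal{U}\times M_2$. The inclusion $\mathcal{E}\subset\mathcal{R}^{\#}$ is automatic. For the converse, fix $(u,y)\in\mathcal{R}^{\#}$ and consider its bi-infinite orbit $\mathcal{O}:=\{\Phi_t(u,y):t\in\mathbb{R}\}$, which exists by the definition of $\mathcal{R}^{\#}$ and is manifestly invariant. I would show that $\mathcal{O}$ is chain transitive: to chain from $\Phi_s(u,y)$ to $\Phi_r(u,y)$ with $s\geq r$, first flow forward from $\Phi_s(u,y)$ to some $\Phi_T(u,y)$ with $T$ large, then splice in a chain recurrence loop at $(u,y)$ with large enough total time to overshoot $\Phi_r(u,y)$, and finally flow forward to $\Phi_r(u,y)$; the case $s\leq r$ is trivial. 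Since $\mathcal{O}$ is then contained in some maximal invariant chain transitive set, uniqueness forces $\mathcal{O}\subset\mathcal{E}$, so $(u,y)\in\mathcal{E}$.

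The main obstacle I anticipate is precisely this chain-transitivity argument for the orbit $\mathcal{O}$: unlike the compact-base setting covered by the Bronstein--Kopanskii style argument cited in the Remark after Definition \ref{Definition_chain_trans}, here $\mathcal{U}\times M_2$ is not compact, so one must construct explicit $(\varepsilon,\tau)$-chains and verify the $\geq\tau$ time condition on every jump, in particular on the concatenation points between the "flow forward" segments and the inserted chain-recurrence loop. Choosing the intermediate time $T$ and the length of the recurrence loop both sufficiently large, and using continuity of $\Phi$ on compact time intervals (Theorem \ref{Theorem_continuous}) to absorb small errors, handles this; no new compactness input is needed because the chains are built out of finitely many orbit segments of the fixed point $(u,y)$.
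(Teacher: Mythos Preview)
Your argument is correct and follows the same logical skeleton as the paper's proof, which is considerably terser: in particular, the paper asserts (iii) $\Rightarrow$ (ii) in a single sentence (``if $E$ is unique it follows that $\mathcal{R}^{\#}$ coincides with the lift of $E$'') without justification, so your orbit chain-transitivity argument is exactly the missing detail.

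Two points in your sketch of (iii) $\Rightarrow$ (i) should be cleaned up. The case $s\leq r$ is not literally trivial when $0\leq r-s<\tau$, since a single flow step violates the minimum-time requirement; one fixes this by taking an $(\varepsilon',\tau)$ chain-recurrence loop at $\Phi_s(u,y)$ and lengthening its last time by $r-s$, using continuity of $\Phi_{r-s}$ at that one point to control the final jump. For $s>r$ your wording (``splice in a chain recurrence loop at $(u,y)$'') is hard to parse as written; the cleanest construction is to take an $(\varepsilon,\tau+s-r)$-chain from $\Phi_r(u,y)$ back to itself (available since $\Phi_r(u,y)\in\mathcal{R}$) and shorten its \emph{first} time by $s-r$, so that the chain now starts at $\Phi_s(u,y)=\Phi_{s-r}(\Phi_r(u,y))$, keeps all times $\geq\tau$, and still ends at $\Phi_r(u,y)$. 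With these adjustments your (iii) $\Rightarrow$ (i) is complete, and no compactness hypothesis is needed.
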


\begin{proof}
Suppose that (i) holds. Then, by Theorem \ref{Theorem_equivalence}(iii) the
projection to $M_{2}$ is a chain control set $E$. By Theorem
\ref{Theorem_equivalence}(iii) it follows that the lift $\mathcal{E}$ of $E$ is

maximal invariant chain transitive set and hence coincides with $\mathcal{R}%
^{\#}$. This implies (iii) since the lift of any chain control set is
contained in $\mathcal{R}^{\#}$. Finally, if $E$ is unique it follows that
$\mathcal{R}^{\#}$ coincides with the lift of $E$.
\end{proof}

The following theorem establishes the announced uniqueness of the chain
control set. While this result generalizes the finite dimensional case
(Colonius and Santana \cite[Theorem 29]{ColS24}), step 3 in the proof is
different since here we cannot argue with time reversal. For the convenience
of the reader we also write down steps 1 and 2 in the present setting.

\begin{theorem}
\label{Theorem_ccs1}Consider the linear delay control system given by
(\ref{D1}) and assume that the matrix $A_{p}$ is invertible. Then there exists
a unique chain control set $E$ in $M_{2}$.
\end{theorem}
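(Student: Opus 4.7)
The proof will follow the three-step strategy of \cite[Theorem 29]{ColS24}, modifying Step 3 to avoid time reversal. The goal is, for any two chain control sets $E_1, E_2 \subset M_2$, to show $E_1 = E_2$ by merging them through a common reference point $y^\star$.

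Step 1 exhibits a chain control set $E_0$ containing a suitable point $y^\star \in M_2$. A natural choice is $y^\star = 0$, which is fixed by the homogeneous semigroup ($T(t)0 = 0$). A Zorn-type argument applied to the family of weakly invariant chain-controllable sets containing $y^\star$ produces a maximal such set $E_0$, which is a chain control set by definition.

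Step 2 is the heart of the argument: every chain control set $E \subset M_2$ contains $y^\star$. Given $y \in E$ and $\varepsilon, \tau > 0$, weak invariance provides $u \in \mathcal{U}$ with $\varphi(t, y, u) \in E$ for all $t \in \mathbb{R}$. I invoke the Selgrade decomposition of Proposition \ref{Proposition_Selgrade_hom} to split $M_2 = V_k^+ \oplus V_k^-$ with $V_k^+$ finite-dimensional containing all unstable and central modes and $V_k^-$ exponentially stable under $T(\cdot)$. For large $t$ the $V_k^-$-component of $\varphi(t, y, u)$ falls below $\varepsilon/2$, while on the finite-dimensional $V_k^+$-piece the classical chain-controllability tools from \cite{ColK00, Kawan13} supply the small jumps needed to steer the trajectory into an $\varepsilon/2$-ball of $y^\star$. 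Since $E$ is closed (from maximality and continuity of $\varphi$), $y^\star \in E$.

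Step 3 is the new step. Let $E_1, E_2$ be chain control sets; Step 2 gives $y^\star \in E_1 \cap E_2$. In the finite-dimensional proof, the chains $y^\star \to y$ required to merge the two sets are obtained from the chains $y \to y^\star$ of Step 2 by reversing time on the control system; this tool is unavailable in our semiflow setting. Instead, I exploit Definition~\ref{Definition_chain} directly: each $E_i$ is chain controllable, so for any $y_i \in E_i$ a controlled chain $y^\star \to y_i$ already exists. Concatenating a chain $y_1 \to y^\star$ (provided by chain controllability of $E_1$ together with Step 2) with a chain $y^\star \to y_2$ in $E_2$ produces a chain $y_1 \to y_2$ with endpoints in $E_1 \cup E_2$; the reverse direction is symmetric. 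Weak invariance of $E_1 \cup E_2$ is inherited from the $E_i$, so maximality of chain control sets forces $E_1 = E_1 \cup E_2 = E_2$. The principal obstacle is Step 2, in which forward-only chains must be engineered to approach $y^\star$ without recourse to a pre-image map; the finite-dimensionality of the unstable subbundle furnished by Proposition \ref{Proposition_Selgrade_hom} is what reduces this to a problem to which the known finite-dimensional techniques apply.
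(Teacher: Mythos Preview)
Your proposal has a genuine gap in Step~2, and this gap swallows the entire argument because your Step~3 is (correctly) a one-line consequence of Step~2.

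First, the claim that ``for large $t$ the $V_k^-$-component of $\varphi(t,y,u)$ falls below $\varepsilon/2$'' is false. Writing $\pi^-\varphi(t,y,u)=T(t)\pi^-y+\int_0^t T(t-s)\pi^-B(u(s),\ldots,u(s-h_p))\,ds$, the first term indeed decays on the stable subspace, but the forcing integral is merely bounded (by $K\alpha^{-1}\|\pi^-\|\,(p+1)\max_i\|B_i\|\max_{\omega\in\Omega}\|\omega\|$) and does \emph{not} tend to zero; in the hyperbolic case it converges to the nonzero bounded solution $e^-(\theta_t u,0)$ of Section~\ref{Section7}. So the stable projection of the trajectory does not approach $0$. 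On the finite-dimensional $V_k^+$-piece the situation is no better: the projected dynamics is unstable, so forward trajectories move away from $0$, and the finite-dimensional chain-controllability results you cite themselves rely either on time reversal or on the affine scaling trick---neither of which you have invoked.

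Second, even granting chains $y\to 0$, your Step~2 never produces chains $0\to y$, yet both directions are needed to conclude $0\in E$ (you must show $E\cup\{0\}$ is chain controllable before maximality applies). The appeal to ``$E$ is closed'' does not bridge this: closedness of $E$ lets you absorb limit points of $E$, not points merely reachable from $E$ by chains. This missing direction is precisely the content of the paper's Step~3, which you have relabelled as Step~3 but replaced by a tautology (once $0\in E_1\cap E_2$, maximality finishes immediately).

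The paper's proof uses a completely different mechanism: the affine scaling identity $\alpha\varphi(t,y,u)=\varphi(t,\alpha y,\alpha u)$ (valid because $\Omega$ is convex with $0\in\Omega$) to shrink a chain $y\to y$ into chains $y\to\alpha^k y\to 0$, and then---the genuinely new infinite-dimensional step---to build chains $0\to(\alpha+\varepsilon)y\to(2\alpha+\varepsilon)y\to\cdots\to y$ by successively rescaling and concatenating a single chain $y\to y$. No spectral decomposition is used. Your spectral route, as written, does not reach either direction.
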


\begin{proof}
First note that for $u\equiv0$ the origin $0\in M_{2}$ is an equilibrium,
hence $\{0\}$ is a weakly invariant chain controllable set. Define $E$ as the
union of all weakly invariant chain controllable sets containing $\{0\}$. Then
$E$ is a weakly invariant chain controllable set and certainly it is maximal
with these properties, hence it is a chain control set. It remains to prove uniqueness.

Observe that the trajectories $y(t)=\varphi(t,y_{0},u),t\in\mathbb{R}$, of
(\ref{mild}) satisfy, for $\alpha\in(0,1)$,%
\begin{align}
\alpha\varphi(t,y_{0},u)  &  =\alpha T(t)y_{0}+\alpha\int_{0}^{t}%
T(t-s)B\left(  u(s),u(s-h_{1}),\ldots,u(s-h_{p})\right)  ds\nonumber\\
&  =T(t)\alpha y_{0}+\int_{0}^{t}T(t-s)B\left(  \alpha u(s),\alpha
u(s-h_{1}),\ldots,\alpha u(s-h_{p})\right)  ds\nonumber\\
&  =\varphi(t,\alpha y_{0},\alpha u). \label{alpha1}%
\end{align}
Here, $\varphi(\cdot,\alpha y_{0},\alpha u)$ is a trajectory of (\ref{mild})
since $\Omega$ is a convex neighborhood of $0\in\mathbb{R}^{m}$ implying that
the controls $\alpha u$ are in $\mathcal{U}$.

Suppose that $E^{\prime}$ is any chain control set and let $y\in E^{\prime}$.
First we will construct controlled $(\varepsilon,\tau)$-chains from $y$ to
$0\in E$.

\textbf{Step 1:} There is a controlled $(\varepsilon,\tau)$-chain from $y$ to
$\alpha y$ for some $\alpha\in(0,1)$.

For the proof consider a controlled $(\varepsilon/2,\tau)$-chain $\zeta$ from
$y$ to $y$ given by $y_{0}=y,y_{1},\ldots,y_{q}=y,\,u_{0},\ldots,u_{q-1}%
\in\mathcal{U}$, and $\tau_{0},\ldots,\tau_{q-1}\geq\tau$ with%
\[
\left\Vert \varphi(\tau_{i},y_{i},u_{i})-y_{i+1}\right\Vert <\varepsilon
/2\text{ for }i=0,\ldots,q-1.
\]
Let $\alpha\in(0,1)$ with $(1-\alpha)\left\Vert y\right\Vert <\varepsilon/2$.
It follows that%
\[
\left\Vert \varphi(\tau_{q-1},y_{q-1},u_{q-1})-\alpha y_{q}\right\Vert
\leq\left\Vert \varphi(\tau_{q-1},y_{q-1},u_{q-1})-y\right\Vert +\left\Vert
y-\alpha y\right\Vert <\varepsilon.
\]
This defines a controlled $(\varepsilon,\tau)$-chain $\zeta^{(1)}$ from $y$ to
$\alpha y$.

\textbf{Step 2: }Replacing $y_{i}$ by $\alpha y_{i}$ and $u_{i}$ by $\alpha
u_{i}$ we get by (\ref{alpha1})%
\begin{align*}
\left\Vert \varphi(\tau_{i},\alpha y_{i},\alpha u_{i})-\alpha y_{i+1}%
\right\Vert  &  =\alpha\left\Vert \varphi(\tau_{i},y_{i},u_{i})-y_{i+1}%
\right\Vert <\varepsilon/2\text{ for }i=0,\ldots,q-1,\\
\left\Vert \varphi(\tau_{q-1},\alpha y_{q-1},\alpha u_{q-1})-\alpha
^{2}y\right\Vert  &  \leq\left\Vert \varphi(\tau_{q-1},\alpha y_{q-1},\alpha
u_{q-1})-\alpha y\right\Vert +\left\Vert \alpha y-\alpha^{2}y\right\Vert
<\varepsilon.
\end{align*}
This defines a controlled $(\varepsilon,\tau)$-chain $\zeta^{(2)}$ from
$\alpha y$ to $\alpha^{2}y$. The concatenation of $\zeta^{(2)}$ and
$\zeta^{(1)}$ yields a controlled $(\varepsilon,\tau)$-chain $\zeta^{(2)}%
\circ\zeta^{(1)}$ from $y$ to $\alpha^{2}y$.

Repeating this construction, we find that the concatenation $\zeta^{(k)}%
\circ\cdots\circ\zeta^{(1)}$ is a controlled $(\varepsilon,\tau)$-chain from
$y\in E^{\prime}$ to $\alpha^{k}y$. Since $\alpha^{k}\rightarrow0$ for
$k\rightarrow\infty$, we can take $k\in\mathbb{N}$ large enough, such that the
last piece of the chain $\zeta^{(k)}$ satisfies%
\[
\left\Vert \varphi(\tau_{q-1},a^{k}y_{q-1},a^{k}u_{q-1})\right\Vert
<\varepsilon.
\]
Thus we may take $0\in E$ as the final point of this controlled $(\varepsilon
,\tau)$-chain showing that the concatenation $\zeta^{(k)}\circ\cdots\circ
\zeta^{(1)}$ defines a controlled $(\varepsilon,\tau)$-chain from $y\in
E^{\prime}$ to $0\in E$.

\textbf{Step 3:} Next we construct controlled chains from $0$ to $y\in
E^{\prime}$.

Consider a controlled $(\varepsilon,\tau)$-chain from $y$ to $y$ given by
$y_{0}=y,y_{1},\ldots,y_{q}=y,\allowbreak\,u_{0},\ldots,u_{q-1}\in\mathcal{U}%
$, and $\tau_{0},\ldots,\tau_{q-1}\geq\tau$ with%
\[
\left\Vert \varphi(\tau_{i},y_{i},u_{i})-y_{i+1}\right\Vert <\varepsilon\text{
for }i=0,\ldots,q-1.
\]
For every $\alpha\in(0,1)$ formula (\ref{alpha1}) shows that $\alpha
y_{0}=\alpha y,\alpha y_{1},\ldots,\alpha y_{q}=\alpha y,\,\allowbreak\alpha
u_{0},\ldots,\allowbreak\alpha u_{q-1}\in\mathcal{U}$, and $\tau_{0}%
,\ldots,\tau_{q-1}\geq\tau$ define a controlled $(\alpha\varepsilon,\tau)$-
chain from $\alpha y$ to $\alpha y$ with%
\[
\left\Vert \varphi(\tau_{i},\alpha y_{i},\alpha u_{i})-\alpha y_{i+1}%
\right\Vert <\alpha\varepsilon\text{ for }i=0,\ldots,q-1.
\]
Let $\alpha\in(0,\varepsilon)$ be small enough such that $\alpha\left\Vert
y\right\Vert <\varepsilon$. Then we may add a segment $\varphi(t,0,0)=0,t\in
\lbrack0,\tau]$, at the beginning to obtain a controlled $(\alpha
\varepsilon,\tau)$-chain from $0$ to $\alpha y$. Furthermore, we find%
\begin{align*}
\left\Vert \varphi(\tau_{q-1},\alpha y_{q-1},\alpha u_{q-1})-(\alpha
+\varepsilon)y\right\Vert  &  \leq\left\Vert \varphi(\tau_{q-1},\alpha
y_{q-1},\alpha u_{q-1})-\alpha y\right\Vert +\varepsilon\left\Vert
y\right\Vert \\
&  \leq\alpha\varepsilon+\varepsilon\left\Vert y\right\Vert <(\varepsilon
+\left\Vert y\right\Vert )\varepsilon.
\end{align*}
Taking $\varepsilon\in(0,1)$ we have constructed a controlled $((1+\varepsilon
)\left\Vert y\right\Vert ,\tau)$-chain $\zeta^{(1)}$ from $0$ to
$(\alpha+\varepsilon)y$.

Now construct a controlled $(2\varepsilon,\tau)$-chain $\zeta^{(2)}$ from
$(\alpha+\varepsilon)y$ to $(2\alpha+\varepsilon)y$. By (\ref{alpha1}),%
\[
\left\Vert \varphi(\tau_{i},(\alpha+\varepsilon)y_{i},(\alpha+\varepsilon
)u_{i})-(\alpha+\varepsilon)y_{i+1}\right\Vert <(\alpha+\varepsilon
)\varepsilon\text{ for }i=0,\ldots,q-1,
\]
and, since $\alpha\left\Vert y\right\Vert <\varepsilon$,%
\begin{align*}
&  \left\Vert \varphi(\tau_{q-1},(\alpha+\varepsilon)y_{q-1},(\alpha
+\varepsilon)u_{q-1})-(2\alpha+\varepsilon)y\right\Vert \\
&  \leq\left\Vert \varphi(\tau_{q-1},(\alpha+\varepsilon)y_{q-1}%
,(\alpha+\varepsilon)u_{q-1})-(\alpha+\varepsilon)y\right\Vert +\alpha
\left\Vert y\right\Vert \\
&  <(\alpha+\varepsilon)\varepsilon+\varepsilon.
\end{align*}
For $\alpha+\varepsilon<1$, it follows that $(\alpha+\varepsilon
)\varepsilon+\varepsilon\leq2\varepsilon$ and hence this is a controlled
$(2\varepsilon,\tau)$-chain from $(\alpha+\varepsilon)y$ to $(2\alpha
+\varepsilon)y$.

Next construct a controlled $(2\varepsilon,\tau)$-chain $\zeta^{(3)}$ starting
in $(2\alpha+\varepsilon)y$: By (\ref{alpha1}),%
\[
\left\Vert \varphi(\tau_{i},(2\alpha+\varepsilon)y_{i},(2\alpha+\varepsilon
)u_{i})-(2\alpha+\varepsilon)y_{i+1}\right\Vert <(2\alpha+\varepsilon
)\varepsilon\text{ for }i=0,\ldots,q-1
\]
and%
\begin{align*}
&  \left\Vert \varphi(\tau_{q-1},(2\alpha+\varepsilon)y_{q-1},(2\alpha
+\varepsilon)u_{q-1})-(3\alpha+\varepsilon)y\right\Vert \\
&  \leq\left\Vert \varphi(\tau_{q-1},(2\alpha+\varepsilon)y_{q-1}%
,(2\alpha+\varepsilon)u_{q-1})-(2\alpha+\varepsilon)y\right\Vert
+\alpha\left\Vert y\right\Vert \\
&  <(2\alpha+\varepsilon)\varepsilon+\varepsilon.
\end{align*}
For $2\alpha+\varepsilon<1$, this defines a controlled $(2\varepsilon,\tau
)$-chain from $(2\alpha+\varepsilon)y$ to $(3\alpha+\varepsilon)y$.

As long as $j\alpha+\varepsilon<1$, we can proceed in this way to obtain
controlled $(2\varepsilon,\tau)$-chains $\zeta^{(j+1)}$ from $(j\alpha
+\varepsilon)y$ to $((j+1)\alpha+\varepsilon)y$ satisfying%
\[
\left\Vert \varphi(\tau_{i},(j\alpha+\varepsilon)y_{i},(j\alpha+\varepsilon
)u_{i})-(j\alpha+\varepsilon)y_{i+1}\right\Vert <(j\alpha+\varepsilon
)\varepsilon<\varepsilon\text{ for }i=0,\ldots,q-1,
\]
with%
\begin{align*}
&  \left\Vert \varphi(\tau_{q-1},(j\alpha+\varepsilon)y_{q-1},(j\alpha
+\varepsilon)u_{q-1})-((j+1)\alpha+\varepsilon)y\right\Vert \\
&  \leq\left\Vert \varphi(\tau_{q-1},(j\alpha+\varepsilon)y_{q-1}%
,(j\alpha+\varepsilon)u_{q-1})-(j\alpha+\varepsilon)y\right\Vert
+\alpha\left\Vert y\right\Vert \\
&  \leq(j\alpha+\varepsilon)\varepsilon+\alpha\left\Vert y\right\Vert
<2\varepsilon.
\end{align*}
When for $j=k$ we arrive at $k\alpha+\varepsilon<1$ and $(k+1)\alpha
+\varepsilon\geq1$, we find%
\begin{equation}
\varepsilon>\varepsilon+k\alpha+\varepsilon-1>(k+1)\alpha+\varepsilon-1\geq0.
\label{N+1}%
\end{equation}
Thus we get for $i=0,\ldots,q-1$%
\[
\left\Vert \varphi(\tau_{i},(k\alpha+\varepsilon)y_{i},(k\alpha+\varepsilon
)u_{i})-(k\alpha+\varepsilon)y_{i+1}\right\Vert <(k\alpha+\varepsilon
)\varepsilon<\varepsilon,
\]
and, by (\ref{N+1}),%
\begin{align*}
&  \left\Vert \varphi(\tau_{q-1},(k\alpha+\varepsilon)y_{q-1},(k\alpha
+\varepsilon)u_{q-1})-y\right\Vert \\
&  \leq\left\Vert \varphi(\tau_{q-1},(k\alpha+\varepsilon)y_{q-1}%
,(k\alpha+\varepsilon)u_{q-1})-((k+1)\alpha+\varepsilon)y\right\Vert \\
&  \qquad+\left\Vert ((k+1)\alpha+\varepsilon)y-y\right\Vert \\
&  <\left\Vert \varphi(\tau_{q-1},(k\alpha+\varepsilon)y_{q-1},(k\alpha
+\varepsilon)u_{q-1})-(k\alpha+\varepsilon)y\right\Vert +\varepsilon\left\Vert
y\right\Vert \\
&  \qquad+\left\Vert ((k+1)\alpha+\varepsilon-1)y\right\Vert \\
&  <(k\alpha+\varepsilon)\varepsilon+\varepsilon\left\Vert y\right\Vert
+((k+1)\alpha+\varepsilon-1)\left\Vert y\right\Vert \\
&  <\varepsilon+\varepsilon\left\Vert y\right\Vert +\varepsilon\left\Vert
y\right\Vert <\varepsilon+2\varepsilon\left\Vert y\right\Vert .
\end{align*}
Thus this defines a controlled $((1+2\left\Vert y\right\Vert )\varepsilon
,\tau)$-chain $\zeta^{(k+1)}$ from $(k\alpha+\varepsilon)y$ to $y$. The
concatenation $\zeta^{(k+1)}\circ\zeta^{(k)}\circ\cdots\circ\zeta^{(1)}$
yields a controlled $((1+2\left\Vert y\right\Vert )\varepsilon,\tau)$-chain
from $0$ to $y$.

Since $\varepsilon,\tau>0$ are arbitrary, steps 2 and 3 imply that $y\in
E^{\prime}\cap E$ and hence $E^{\prime}=E$.
\end{proof}

\section{The linear lift and the Poincar\'{e} sphere\label{Section6}}

In this section we lift the affine control semiflow $\Phi$ on $\mathcal{U}%
\times M_{2}$ to a linear control semiflow $\Phi^{1}$ on $\mathcal{U}\times
M_{2}^{1}$ with $M_{2}^{1}:=M_{2}\times\mathbb{R}$ and obtain a discrete
Selgrade decomposition by an application of the generalized Selgrade theorem,
Theorem \ref{Theorem_Selgrade1}. Furthermore, conjugation properties of the
associated semiflows are derived.

The space $M_{2}^{1}$ becomes a Hilbert space with the scalar product%
\[
\left\langle (x,\gamma),(x^{\prime},\gamma^{\prime})\right\rangle
:=\left\langle x.y\right\rangle _{M_{2}}+\gamma\gamma^{\prime}\text{ for
}(x,\gamma),(x^{\prime},\gamma^{\prime})\in M_{2}\times\mathbb{R}.
\]
We embed the linear control system (\ref{D1}) into a bilinear control system
on $M_{2}^{1}$ by introducing an additional state variable $x^{1}$. Consider
for $t\geq0$%
\begin{align}
\dot{x}(t)  &  =A_{0}x(t)+\sum_{i=1}^{p}A_{i}x(t-h_{i})+x^{1}(t)\sum_{i=0}%
^{p}B_{i}u(t-h_{i}),\,u\in\mathcal{U},\nonumber\\
\dot{x}^{1}(t)  &  =0,\label{D3}\\
x(0)  &  =r,\,x(s)=f(s)\text{ for almost all}-h\leq s<0\text{, and }%
x^{1}(0)=\gamma\in\mathbb{R}.\nonumber
\end{align}
Denote the solutions of (\ref{D3}) by $(x(t),x^{1}(t))=\psi^{1}(t,r,f,\gamma
,u)\in\mathbb{R}^{n+1},t\geq0$ solving by (\ref{x_inhom})%
\[
x(t)=e^{A_{0}t}r+\int_{0}^{t}e^{A_{0}(t-s)}\left[  \sum_{i=1}^{p}%
A_{i}x(s-h_{i})+\gamma\sum_{i=0}^{p}B_{i}u(t-h_{i})\right]  ds,\,x^{1}%
(t)=\gamma.
\]
In the state space $M_{2}^{1}$ one obtains the bilinear control system%
\begin{equation}
\dot{y}(t)=Ay(t)+y^{1}(s)B\left(  u(s),u(s-h_{1}),\ldots,u(s-h_{p})\right)
,\dot{y}^{1}(t)=0, \label{linear_lift}%
\end{equation}
with $(y(0),y^{1}(0))=\left(  y_{0},\gamma\right)  \in M_{2}^{1}=M_{2}%
\times\mathbb{R}$ and controls $u\in\mathcal{U}$.

Equivalently, the control semiflow $\Phi$ is lifted to a control semiflow
$\Phi^{1}$ on $\mathcal{U}\times M_{2}^{1}$ defined by%
\begin{align}
\Phi_{t}^{1}(u,y_{0},\gamma)  &  :=(\theta_{t}u,\varphi^{1}(t,y_{0}%
,\gamma,u)),\label{lift1}\\
\varphi^{1}(t,y_{0},\gamma,u)  &  :=\left(  T(t)y_{0}+\int_{0}^{t}%
T(t-s)y^{1}(s)B\left(  u(s),\ldots,u(s-h_{p})\right)  ds,y^{1}(t)\right)
\nonumber\\
&  =\left(  T(t)y_{0}+\gamma\int_{0}^{t}T(t-s)B\left(  u(s),\ldots
,u(s-h_{p})\right)  ds,\gamma\right)  .\nonumber
\end{align}
Observe that for $t\geq0,y_{0}=(r,f)\in M_{2},u\in\mathcal{U}$, and $\gamma=1$
one has
\[
\psi^{1}(t,r,f,1,u)=(\psi(t,r,f,u),1)\text{ and }\varphi^{1}(t,y_{0}%
,1,u)=(\varphi(t,y_{0},u),1).
\]
An application of Theorem \ref{Theorem_Selgrade1}\ to the linear skew product
semiflow $\Phi^{1}$ defined by (\ref{lift1}) yields the following discrete
Selgrade decomposition of $\Phi^{1}$.

\begin{theorem}
\label{Theorem_Selgrade2}For the affine delay control system (\ref{D1}),
assume that $\det A_{p}\not =0$, and consider the associated bilinear delay
control system (\ref{D3}). Then, for the linear control semiflow $\Phi^{1}$ on
$\mathcal{V}^{1}=\mathcal{U}\times M_{2}^{1}$ defined in (\ref{lift1}), there
is an at-most countable sequence $\{\mathcal{A}_{i}^{1}\}_{i=0}^{N^{1}%
},\allowbreak N^{1}\in\{0,1,\ldots\}\cup\{\infty\}$, of subsets of
$\mathbb{P}\mathcal{V}^{1}=\mathcal{U}\times\mathbb{P}M_{2}^{1}$ with
$\mathcal{A}_{0}^{1}=\varnothing,\mathcal{A}_{i}^{1}\subset\mathcal{A}%
_{i+1}^{1}$ for all $1\leq i<N^{1}$, such that, for $1\leq i<N^{1}+1$, the
following properties hold.

(i) The set $\mathcal{A}_{i}^{1}$ is an asymptotically compact attractor for
$\mathbb{P}\Phi^{1}$.

(ii) If $\mathcal{A}^{1}$ is any nonempty asymptotically compact attractor for
$\mathbb{P}\Phi^{1}$, then it follows that $\mathcal{A}^{1}=\mathcal{A}%
_{i}^{1}$ for some $1\leq i<N^{1}+1$.

(iii) For the finite dimensional subbundles $\mathcal{V}_{i}^{1,+}%
=\mathbb{P}^{-1}\mathcal{A}_{i}^{1}$ there are subbundles $\mathcal{V}%
_{i}^{1,-}$ such that $\mathcal{V}^{1}=\mathcal{V}_{i}^{1,+}\oplus
\mathcal{V}_{i}^{1,-}$ is an exponentially separated splitting.

(iv) The subbundles $\mathcal{V}_{i}^{1}:=\mathcal{V}_{i}^{1,+}\cap
\mathcal{V}_{i-1}^{1,-}$ are finite dimensional, invariant subbundles of
$\mathcal{V}^{1}$ such that%
\[
\mathcal{V}_{i}^{1,+}=\mathcal{V}_{1}^{1}\oplus\cdots\oplus\mathcal{V}_{i}%
^{1}.
\]

(v) The sets $\mathcal{M}_{i}^{1}=\mathbb{P}\mathcal{V}_{i}^{1}$ are maximal
chain transitive for the projectivized flow $\mathbb{P}\Phi^{1}$ restricted to
$\mathcal{V}_{i}^{1,+}$.
\end{theorem}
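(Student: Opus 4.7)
The plan is to verify hypotheses (H1)--(H4) of Theorem \ref{Theorem_Selgrade1} for the linear semiflow $\Phi^{1}$ on the bundle $\mathcal{V}^{1}=\mathcal{U}\times M_{2}^{1}$ and then quote that theorem directly; the five conclusions (i)--(v) are then nothing more than the restatement of the corresponding assertions in Theorem \ref{Theorem_Selgrade1}. The base space $\mathcal{U}$ is compact metric and chain transitive under the shift $\theta$ (recalled in Section \ref{Section4}), and $M_{2}^{1}=M_{2}\times\mathbb{R}$ is a separable real Hilbert space, so the structural assumptions of Theorem \ref{Theorem_Selgrade1} are met.

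For the hypotheses on $\Phi^{1}$, introduce the shorthand $c_{u}(t):=\int_{0}^{t}T(t-s)B(u(s),u(s-h_{1}),\ldots,u(s-h_{p}))\,ds=\varphi(t,0,u)\in M_{2}$. Then (\ref{lift1}) reads
\[
\varphi^{1}(t,y_{0},\gamma,u)=(T(t)y_{0}+\gamma\,c_{u}(t),\,\gamma),
\]
so for every $(t,u)$ the fiber map $(y_{0},\gamma)\mapsto\varphi^{1}(t,y_{0},\gamma,u)$ is linear; it is bounded because $T(t)$ is bounded and $\|c_{u}(t)\|_{M_{2}}$ is uniformly bounded in $u\in\mathcal{U}$ (control values lie in the compact set $\Omega$). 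Injectivity follows since $\varphi^{1}(t,y_{0},\gamma,u)=0$ forces $\gamma=0$, hence $T(t)y_{0}=0$, hence $y_{0}=0$ by Proposition \ref{Proposition_injective1} and the standing assumption $\det A_{p}\neq 0$. This yields (H1) and (H2), while $\pi_{B}\circ\Phi^{1}=\theta$ is immediate from the definition.

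For (H3), fix $t\geq 0$ and observe that the $u$-dependence of $\Phi_{t}^{1}(u,\cdot)$ enters only through the vector $c_{u}(t)$, so
\[
\bigl(\Phi_{t}^{1}(u,\cdot)-\Phi_{t}^{1}(u',\cdot)\bigr)(y_{0},\gamma)=\bigl(\gamma(c_{u}(t)-c_{u'}(t)),\,0\bigr).
\]
The operator norm of this difference therefore equals $\|c_{u}(t)-c_{u'}(t)\|_{M_{2}}=\|\varphi(t,0,u)-\varphi(t,0,u')\|$, which tends to zero as $u'\to u$ by Theorem \ref{Theorem_continuous}(ii). Hypothesis (H4) --- joint strong continuity in $(t,u)$ of $(t,u)\mapsto\Phi_{t}^{1}(u,\cdot)$ --- reduces componentwise to (a) strong continuity of $(t,u)\mapsto T(t)y_{0}+\gamma c_{u}(t)$, which is exactly Theorem \ref{Theorem_continuous}(i) applied to the pair $(y_{0},\gamma u)$ after decomposing into homogeneous and inhomogeneous parts; and (b) continuity of the $\gamma$-component, which is trivial.

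Once (H1)--(H4) are in place, Theorem \ref{Theorem_Selgrade1} supplies the sequence $\{\mathcal{A}_{i}^{1}\}$ and assertions (i)--(v) verbatim. The only nontrivial point is that (H3) asks for \textbf{norm} continuity in $u$ while Theorem \ref{Theorem_continuous}(ii) produces continuity of an affine map; the clean reduction above works precisely because the $u$-dependence in the lift is through a single $M_{2}$-valued term multiplied by the scalar coordinate $\gamma$, so the affine continuity of $\varphi(t,\cdot,u)$ in $u$ translates without loss into operator-norm continuity of the linear operator $\Phi_{t}^{1}(u,\cdot)$. This is the step I expect a careful reader to want written out explicitly; everything else is bookkeeping.
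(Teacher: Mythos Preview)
Your proposal is correct and follows essentially the same route as the paper: both verify hypotheses (H1)--(H4) for the lifted semiflow $\Phi^{1}$ and then invoke Theorem \ref{Theorem_Selgrade1} directly, with the key observation for (H3) being that the operator-norm difference collapses to $\|\varphi(t,0,u)-\varphi(t,0,u')\|$. One minor wording issue: in your (H4) argument the phrase ``applied to the pair $(y_{0},\gamma u)$'' is imprecise because $\gamma u$ need not lie in $\mathcal{U}$ for $|\gamma|>1$, but your own remark about decomposing into homogeneous and inhomogeneous parts already points to the clean fix (handle $T(t)y_{0}$ by strong continuity of the semigroup and $\gamma\,c_{u}(t)$ by Theorem \ref{Theorem_continuous}(i) with $y=0$), which is exactly what the paper does.
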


\begin{proof}
We verify the assumptions of Theorem \ref{Theorem_Selgrade1}. The Hilbert
space $M_{2}^{1}$ is separable, and the lifted control semiflow $\Phi^{1}$ on
$\mathcal{U}\times M_{2}^{1}$ is linear. This is seen as in the finite
dimensional case; cf. Colonius and Santana \cite{ColS24}. Theorem
\ref{Theorem_continuous} implies that it is continuous and Hypotheses (H1) is
clear by definition.

By Proposition \ref{Proposition_injective2}, invertibility of the matrix
$A_{p}$ is equivalent to injectivity of the maps $\Phi_{t}(u,\cdot)$. The
definition in (\ref{lift1}) shows that this is also equivalent to injectivity
of the maps $\Phi_{t}^{1}(u,\cdot)$ since $\Phi_{t}^{1}(u,y_{0},\gamma
)=\Phi_{t}^{1}(u,y_{0}^{\prime},\gamma^{\prime})$ holds if and only if
$\gamma=\gamma^{\prime}$ and $\Phi_{t}(u,y_{0})=\Phi_{t}(u,y_{0}^{\prime})$.
Thus hypothesis (H2) holds.

We claim that for fixed $t\geq0$ the map $u\mapsto\Phi_{t}^{1}(u,\cdot)$ is
continuous in the operator norm, thus (H3) holds. For the proof of the claim,
consider for $u^{k}\rightarrow u^{0}$ in $\mathcal{U}$ the difference in the
operator norm on $M_{2}^{1}$. By the definition in (\ref{lift1}) we obtain%
\begin{align*}
&  \left\Vert \Phi_{t}^{1}(u^{k},y_{0},\gamma)-\Phi_{t}^{1}(u^{0},y_{0}%
,\gamma)\right\Vert \\
&  =\sup\left\{  \left\Vert \varphi^{1}(t,y_{0},\gamma,u^{k})-\varphi
^{1}(t,y_{0},\gamma,u^{0})\right\Vert \left\vert \left\Vert (y_{0}%
,\gamma)\right\Vert \leq1\right.  \right\} \\
&  \leq\sup_{\left\vert \gamma\right\vert \leq1}\left\vert \gamma\right\vert
\left\Vert \int_{0}^{t}T(t-s)[B\left(  u^{k}(s),\ldots,u^{k}(s-h_{p})\right)
-B\left(  u^{0}(s),\ldots,u^{0}(s-h_{p})\right)  ]ds\right\Vert \\
&  =\left\Vert \varphi(t,0,u^{k})-\varphi(t,0,u^{0})\right\Vert .
\end{align*}
By Theorem \ref{Theorem_continuous}(i) the right hand side converges to $0$
for $k\rightarrow\infty$, and hence the claim follows.

In order to verify hypotheses (H4), let $(t_{k},u^{k})\rightarrow(t_{0}%
,u^{0})$ and consider $(y_{0},\gamma)\in M_{2}^{1}$. Then, as for (H3),
Theorem \ref{Theorem_continuous}(i) implies $\varphi^{1}(t_{k},u^{k}%
,y_{0},\gamma)\rightarrow\varphi^{1}(t_{0},u^{0},y_{0},\gamma)$ in $M_{2}^{1}%
$. Hence the mapping associating to $(t,u)\in\lbrack0,\infty)\times
\mathcal{U}$ the operator $\Phi_{t}^{1}(u,\cdot)$ on $M_{2}^{1}$ is continuous
in the strong operator topology showing (H4).

Thus all assumptions of Theorem \ref{Theorem_Selgrade1} are verified and the
assertions follow.
\end{proof}

Next we will analyze the Selgrade bundles $\mathcal{V}_{i}^{1}$ in more
detail. Define subsets of $M_{2}^{1}=M_{2}\times\mathbb{R}$, of the unit
sphere $\mathbb{S}M_{2}^{1}=\{(y,\gamma)\in M_{2}^{1}\left\vert \left\Vert
(y,\gamma)\right\Vert =1\right.  \}$, and of the projective space
$\mathbb{P}M_{2}^{1}$ by%
\begin{align*}
M_{2}^{1,0}  &  =M_{2}\times\left\{  0\right\}  ,M_{2}^{1,1}=M_{2}%
\times\left(  \mathbb{R}\setminus\{0\}\right)  ,\\
\mathbb{S}^{+}M_{2}^{1}  &  :=\left\{  (y,\gamma)\in\mathbb{S}M_{2}%
^{1}\left\vert y\in M_{2},\gamma>0\right.  \right\}  ,\mathbb{S}^{0}M_{2}%
^{1}=\{(y,0)\in\mathbb{S}M_{2}^{1}\left\vert y\in M_{2}\right.  \},\\
\mathbb{P}M_{2}^{1,0}  &  =\{\mathbb{P}(y,0)\in\mathbb{P}M_{2}^{1}\left\vert
y\in M_{2}\right.  \},\,\,\mathbb{P}M_{2}^{1,1}=\{\mathbb{P}(y,\gamma
)\in\mathbb{P}M_{2}^{1}\left\vert y\in M_{2},\gamma\not =0\right.  \},
\end{align*}
respectively. One easily sees that the projective space $\mathbb{P}M_{2}%
^{1}=\overline{\mathbb{P}M_{2}^{1,1}}$ is the disjoint union of the closed
subset $\mathbb{P}M_{2}^{1,0}$ and the open subset $\mathbb{P}M_{2}^{1,1}$.
Note that $\mathbb{P}M_{2}^{1,1}$ can be identified with the northern
hemisphere $\mathbb{S}^{+}M_{2}^{1}$ of the sphere $\mathbb{S}M_{2}^{1}$, the
set $\mathbb{S}^{0}M_{2}^{1}$ is the equator, and $\mathbb{P}M_{2}^{1,0}$ is
its image in $\mathbb{P}M_{2}^{1}$.

\begin{definition}
The Poincar\'{e} sphere bundle and the projective Poincar\'{e} bundle are
$\mathcal{U}\times\mathbb{S}M_{2}^{1}$ and $\mathcal{U}\times\mathbb{P}%
M_{2}^{1}$, respectively. The equatorial bundle and the projective equatorial
bundle are $\mathcal{U}\times M_{2}^{1,0}$ and $\mathcal{U}\times
\mathbb{P}M_{2}^{1,0}$, respectively.
\end{definition}

\begin{remark}
\label{Remark_sphere}The construction above is a modification of a classical
construction for polynomial ordinary differential equations going back to
Poincar\'{e}. We consider affine equations and subjoin the additional scalar
state variable $x^{1}$ in front of the inhomogeneous term, while
Poincar\'{e}'s construction just adds $x^{1}$; cf. Perko \cite{Perko}, Cima
and Llibre \cite{CimL90}. A consequence (see Proposition \ref{Proposition_e1})
is that the induced equation on the equatorial bundle is determined by the
linear part and the inhomogeneous term vanishes.
\end{remark}

A conjugacy of two semiflows $\psi$ and $\psi^{\prime}$ on metric spaces $X$
and $X^{\prime}$, respectively, is a homeomorphism $h:X\rightarrow X^{\prime}$
satisfying for all $x\in X$,%
\[
h(\psi(t,x))=\psi^{\prime}(t,h(x))\text{ for }t\geq0.
\]
Recall that the homogeneous part $\Phi^{0}$ of the semiflow $\Phi$ on
$\mathcal{U}\times\mathbb{P}M_{2}$ (cf. (\ref{phi_0})) induces a projectivized
semiflow $\mathbb{P}\Phi_{t}^{0}=(\theta_{t}u,\mathbb{P}T(t)),t\geq0$, on
$\mathcal{U}\times\mathbb{P}M_{2}$.

\begin{proposition}
\label{Proposition_e1}(i) The map%
\[
h^{0}:\mathcal{U}\times M_{2}\rightarrow\mathcal{U}\times M_{2}^{1,0}%
,h^{0}(u,y):=(u,(y,0)),
\]
and its inverse are uniformly continuous and $h^{0}$ conjugates the semiflow
$\Phi^{0}$ on $\mathcal{U}\times M_{2}$ and the semiflow $\Phi^{1}$ restricted
to the equatorial bundle $\mathcal{U}\times M_{2}^{1,0}$.

(ii) The projective map%
\[
\mathbb{P}h^{0}:\mathcal{U}\times\mathbb{P}M_{2}\rightarrow\mathcal{U}%
\times\mathbb{P}M_{2}^{1,0},\mathbb{P}h^{0}(u,\mathbb{P}y):=(u,\mathbb{P}%
(y,0)),
\]
and its inverse are uniformly continuous and $\mathbb{P}h^{0}$ conjugates the
flow $\mathbb{P}\Phi^{0}$ on $\mathcal{U}\times\mathbb{P}M_{2}$ and the flow
$\mathbb{P}\Phi^{1}$ restricted to $\mathcal{U}\times\mathbb{P}M_{2}^{1,0}$.

(iii) For $j\leq i$, the maximal invariant chain transitive sets
$\mathcal{M}_{j}^{0}$ of $\mathbb{P}\Phi^{0}$ restricted to $\mathcal{V}%
_{i}^{0,+}$ are mapped onto the maximal invariant chain transitive sets
$\mathcal{M}_{j}^{1}=\mathbb{P}h^{0}(\mathcal{M}_{j}^{0})$ of $\mathbb{P}%
\Phi^{1}$ restricted to $\mathbb{P}h^{0}(\mathbb{P}\mathcal{V}_{i}^{0,+})$,
and their order is preserved.

(iv) The sets $\mathbb{P}^{-1}(\mathbb{P}h^{0}(\mathcal{M}_{j}^{0}%
))=\mathcal{V}_{j}^{1}$ are finite dimensional subbundles of the subbundle
$\mathbb{P}^{-1}\left(  \mathbb{P}h^{0}(\mathcal{V}_{i}^{0,+})\right)
\subset\mathcal{U}\times M_{2}^{1,0}$.
\end{proposition}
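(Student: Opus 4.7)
The plan is to handle parts (i) and (ii) by direct computation with the metrics and the formula (\ref{lift1}), then deduce (iii) and (iv) from standard properties of topological conjugacies combined with the uniqueness in the discrete Selgrade decomposition.

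For (i), I would first observe that since $\left\Vert (y,0)\right\Vert _{M_{2}^{1}}=\left\Vert y\right\Vert _{M_{2}}$, the map $h^{0}$ is an isometric embedding between $\mathcal{U}\times M_{2}$ and $\mathcal{U}\times M_{2}^{1,0}$ for the product metrics of the form (\ref{metric_V}), so uniform continuity of both $h^{0}$ and its inverse is immediate. The conjugation identity $h^{0}\circ\Phi_{t}^{0}=\Phi_{t}^{1}\circ h^{0}$ follows by direct inspection of (\ref{lift1}): when $\gamma=0$ the integral term is annihilated by the factor $\gamma$, leaving $\varphi^{1}(t,y,0,u)=(T(t)y,0)$, which is exactly $h^{0}(\Phi_{t}^{0}(u,y))$. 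Part (ii) then follows by passing to the projective quotient: since $\left\Vert (y,0)\right\Vert =\left\Vert y\right\Vert $, the normalized representatives match under $h^{0}$, so $\mathbb{P}h^{0}$ is an isometric embedding in the projective metric (\ref{metric_P}), and the conjugation relation descends.

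For (iii), I would appeal to the standard fact that a topological conjugacy whose inverse is uniformly continuous transports $(\varepsilon,\tau)$-chains, and thus preserves chain transitivity, maximality among chain transitive sets, and $\omega$- and backward limit sets, so in particular it preserves the Morse order described in Remark \ref{Remark_order}. Combined with Proposition \ref{Proposition_Selgrade_hom}(v), which identifies the $\mathcal{M}_{j}^{0}$ as precisely the maximal chain transitive sets of $\mathbb{P}\Phi^{0}$ restricted to $\mathbb{P}\mathcal{V}_{i}^{0,+}$, the conjugacy $\mathbb{P}h^{0}$ then delivers the assertion with $\mathcal{M}_{j}^{1}:=\mathbb{P}h^{0}(\mathcal{M}_{j}^{0})$.

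For (iv), the set-theoretic computation gives $\mathbb{P}^{-1}(\mathbb{P}h^{0}(\mathcal{M}_{j}^{0}))=h^{0}(\mathcal{V}_{j}^{0})$, which is a finite dimensional subbundle of $\mathcal{U}\times M_{2}^{1,0}$ of the same rank as $\mathcal{V}_{j}^{0}$, sitting inside $h^{0}(\mathcal{V}_{i}^{0,+})=\mathbb{P}^{-1}(\mathbb{P}h^{0}(\mathcal{V}_{i}^{0,+}))$. The identification of $h^{0}(\mathcal{V}_{j}^{0})$ with the Selgrade subbundle $\mathcal{V}_{j}^{1}$ from Theorem \ref{Theorem_Selgrade2} follows by applying the uniqueness clause of Theorem \ref{Theorem_Selgrade1}(ii) to the dynamics restricted to the invariant equatorial bundle: the attractor sequence produced via the conjugacy of (ii) from Proposition \ref{Proposition_Selgrade_hom} must coincide with the sequence of Selgrade attractors of $\mathbb{P}\Phi^{1}$ lying in the equatorial bundle, and the associated subbundles therefore agree. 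The main obstacle I expect lies in this last step, because the full Selgrade decomposition of $\Phi^{1}$ on $\mathcal{U}\times M_{2}^{1}$ can contain directions transverse to the equator (in particular, the $\gamma$-axis carries trivial zero-exponent dynamics); care is required to argue that the enumerated $\mathcal{V}_{j}^{1}$ in the statement are precisely those whose projectivizations lie in $\mathcal{U}\times\mathbb{P}M_{2}^{1,0}$, which is exactly what the uniqueness of the finest attractor sequence on the invariant subbundle supplies.
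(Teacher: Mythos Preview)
Your arguments for parts (i)--(iii) are essentially identical to the paper's: the paper also computes $\Phi_{t}^{1}(u,y,0)=(\theta_{t}u,T(t)y,0)=h^{0}(\Phi_{t}^{0}(u,y))$ directly, observes that $h^{0}$ is an isometry so that both it and its inverse are uniformly continuous, passes to the projective quotient for (ii), and then for (iii) invokes that a uniformly continuous conjugacy preserves chain transitivity together with Proposition~\ref{Proposition_Selgrade_hom}.

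For part (iv) you are doing more than the paper asks, and more than its proof establishes. The paper treats the equality $\mathbb{P}^{-1}(\mathbb{P}h^{0}(\mathcal{M}_{j}^{0}))=\mathcal{V}_{j}^{1}$ as \emph{notation} for the image bundle $h^{0}(\mathcal{V}_{j}^{0})$; the only content claimed is that these are finite dimensional subbundles of $\mathbb{P}^{-1}(\mathbb{P}h^{0}(\mathcal{V}_{i}^{0,+}))\subset\mathcal{U}\times M_{2}^{1,0}$, which follows immediately from the isometric conjugacy, exactly as your first sentence for (iv) says. The paper's proof for (iv) is literally the phrase ``and assertion (iv)''---no uniqueness argument is invoked. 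Your additional step, identifying $h^{0}(\mathcal{V}_{j}^{0})$ with the $j$-th Selgrade subbundle of $\Phi^{1}$ on the \emph{full} space $\mathcal{U}\times M_{2}^{1}$ from Theorem~\ref{Theorem_Selgrade2}, is not part of the proposition, and your own caveat is well founded: the decomposition of $\Phi^{1}$ on $\mathcal{U}\times M_{2}^{1}$ can contain subbundles transverse to the equator (the central bundle $\mathcal{V}_{c}^{1}$ of Theorem~\ref{Theorem_lift} in the hyperbolic case is an explicit example), so the global $\mathcal{V}_{j}^{1}$ of Theorem~\ref{Theorem_Selgrade2} need not all lie in $\mathcal{U}\times M_{2}^{1,0}$ and need not be in one-to-one correspondence with the $h^{0}(\mathcal{V}_{j}^{0})$. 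Drop that last identification and your proof matches the paper.
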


\begin{proof}
(i) The semiflow $\Phi^{1}$ restricted to $\mathcal{U}\times M_{2}^{1,0}$ and
the semiflow $\Phi^{0}$ on $\mathcal{U}\times M_{2}$ satisfy%
\[
\Phi_{t}^{1}(h^{0}(u,y))=\Phi_{t}^{1}(u,y,0)=(\theta_{t}u,T(t)y,0)=(\Phi
_{t}^{0}(u,y),0)=h^{0}(\Phi_{t}^{0}(u,y)).
\]
Furthermore, uniform continuity of $h^{0}$ and $(h^{0})^{-1}$ holds since%
\[
d((u,y),(v,z))=d((u,(y,0)),(v,(z,0))).
\]
Assertion (ii) is a consequence of (i). Since the map $\mathbb{P}h^{0}$ and
its inverse are uniformly continuous they preserve chain transitivity. By
Proposition \ref{Proposition_Selgrade_hom} the subbundles $\mathcal{V}_{i}%
^{0}$ and the subsets $\mathbb{P}\mathcal{V}_{i}^{0}$ are linearly ordered
(cf. Remark \ref{Remark_order}) and the sets $\mathcal{M}_{i}^{0}%
=\mathbb{P}\mathcal{V}_{i}^{0}$ are maximal chain transitive sets for
$\mathbb{P}\Phi^{0}$ restricted to $\mathbb{P}\mathcal{V}_{i}^{0,+}$. This
also implies the assertion on the maximal chain transitive sets and the
associated subbundles in (iii) and assertion (iv).
\end{proof}

Next we turn to the induced semiflow $\mathbb{P}\Phi^{1}$ restricted to
$\mathcal{U}\times\mathbb{P}M_{2}^{1,1}$ and note the following lemma.

\begin{lemma}
\label{Lemma_epsilon}Define the map%
\begin{equation}
h^{1}:\mathcal{U}\times M_{2}\rightarrow\mathcal{U}\times\mathbb{P}M_{2}%
^{1,1},h^{1}(u,y):=(u,\mathbb{P}\left(  y,1\right)  ),(u,y)\in\mathcal{U}%
\times M_{2}. \label{h1}%
\end{equation}
For every $\varepsilon,\tau>0$ any $(\varepsilon,\tau)$-chain in
$\mathcal{U}\times M_{2}$ is mapped by $h^{1}$ onto a $(2\varepsilon,T)$-chain
in $\mathcal{U}\times\mathbb{P}M_{2}^{1,1}$.
\end{lemma}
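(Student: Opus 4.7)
The plan is to exploit the fact that $h^{1}$ intertwines the affine semiflow $\Phi$ with the projective semiflow $\mathbb{P}\Phi^{1}$ restricted to $\mathcal{U}\times\mathbb{P}M_{2}^{1,1}$, and then control the distortion introduced by passing from the norm on $M_{2}^{1}$ to the projective metric $d_{\mathbb{P}}$. The crucial point is that the affine lift $(y,1)$ used in the definition of $h^{1}$ has a normalized last coordinate bounded below by $1$, which keeps the denominators harmless in the projective distance estimate.

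First I would record the conjugation identity. From the explicit formula (\ref{lift1}) one reads off $\varphi^{1}(t,y_{0},1,u)=(\varphi(t,y_{0},u),1)$, hence $\Phi_{t}^{1}(u,y_{0},1)=(\theta_{t}u,\varphi(t,y_{0},u),1)$. Projectivizing yields
\[
\mathbb{P}\Phi_{t}^{1}\bigl(u,\mathbb{P}(y_{0},1)\bigr)=\bigl(\theta_{t}u,\mathbb{P}(\varphi(t,y_{0},u),1)\bigr)=h^{1}(\Phi_{t}(u,y_{0})).
\]
Thus $\mathbb{P}\Phi^{1}\circ h^{1}=h^{1}\circ\Phi$, so the images under $h^{1}$ of an $(\varepsilon,\tau)$-chain are candidate jumping points for a chain of $\mathbb{P}\Phi^{1}$ with the same times $\tau_{0},\ldots,\tau_{q-1}\geq\tau$.

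Next I would estimate the projective jump. Fix a jump in the original chain, i.e. assume $d_{\mathcal{V}}(\Phi_{\tau_{j}}(u_{j},y_{j}),(u_{j+1},y_{j+1}))<\varepsilon$ in the metric (\ref{metric_V}). The $\mathcal{U}$-component of $d_{\mathbb{P}\mathcal{V}}$ on $\mathcal{U}\times\mathbb{P}M_{2}^{1,1}$ is the same $d_{\mathcal{U}}$ and is already bounded by $\varepsilon<2\varepsilon$. For the projective component, set $v:=(\varphi(\tau_{j},y_{j},u_{j}),1)$ and $w:=(y_{j+1},1)$, so that $\|v-w\|=\|\varphi(\tau_{j},y_{j},u_{j})-y_{j+1}\|<\varepsilon$ and $\|w\|\geq1$. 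Writing
\[
\frac{v}{\|v\|}-\frac{w}{\|w\|}=\frac{v-w}{\|w\|}+v\,\frac{\|w\|-\|v\|}{\|v\|\,\|w\|}
\]
and applying the reverse triangle inequality $|\,\|w\|-\|v\|\,|\leq\|v-w\|$ gives $\|v/\|v\|-w/\|w\|\|\leq 2\|v-w\|/\|w\|\leq 2\varepsilon$. Since $d_{\mathbb{P}}$ is the minimum of this quantity and its antipodal counterpart, $d_{\mathbb{P}}(\mathbb{P}v,\mathbb{P}w)\leq 2\varepsilon$.

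Combining the two component bounds and using $d_{\mathbb{P}\mathcal{V}}$ from (\ref{metric_P}) yields $d_{\mathbb{P}\mathcal{V}}(\mathbb{P}\Phi_{\tau_{j}}^{1}(h^{1}(u_{j},y_{j})),h^{1}(u_{j+1},y_{j+1}))<2\varepsilon$ for each $j$, and since the times $\tau_{j}\geq\tau$ are unchanged, the sequence of images is a $(2\varepsilon,\tau)$-chain for $\mathbb{P}\Phi^{1}$ in $\mathcal{U}\times\mathbb{P}M_{2}^{1,1}$. The main obstacle is simply keeping the constant $2$ sharp in the projective estimate; the lower bound $\|w\|\geq1$ coming from the affine embedding makes this essentially automatic, and no further input about $\Phi$ is needed beyond the conjugation identity.
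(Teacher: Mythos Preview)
Your proof is correct and follows the same overall strategy as the paper: reduce to showing that $h^{1}$ distorts distances by at most a factor $2$, and combine this with the conjugation $\mathbb{P}\Phi^{1}\circ h^{1}=h^{1}\circ\Phi$ (which the paper actually postpones to Proposition~\ref{Proposition_e2}, so your version is more self-contained).

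The one notable difference is in the projective estimate. The paper splits $(y,1)/\|(y,1)\|-(y',1)/\|(y',1)\|$ into its $M_{2}$- and $\mathbb{R}$-components and bounds them separately by $2\varepsilon$ and $\varepsilon$. Your single algebraic identity
\[
\frac{v}{\|v\|}-\frac{w}{\|w\|}=\frac{v-w}{\|w\|}+v\,\frac{\|w\|-\|v\|}{\|v\|\,\|w\|}
\]
together with the reverse triangle inequality and $\|w\|\geq 1$ gives $\|v/\|v\|-w/\|w\|\|\leq 2\|v-w\|<2\varepsilon$ directly for the full $M_{2}^{1}$-norm, which is slightly cleaner and avoids any question about how componentwise bounds combine under the Hilbert norm on $M_{2}^{1}$. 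One trivial polish: your final ``$\leq 2\varepsilon$'' is in fact strict, since $\|v-w\|<\varepsilon$ is strict; this is needed for the chain condition.
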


\begin{proof}
It suffices to show that $d((u,y),(u^{\prime},y^{\prime}))<\varepsilon$
implies $d(h^{1}(u,y),h^{1}(u^{\prime},y^{\prime}))<2\varepsilon$ in
$\mathcal{U}\times\mathbb{P}M_{2}^{1}$ and this follows by the following
estimates of the distances on $\mathbb{P}M_{2}^{1}$. According to the
definition of the metric in (\ref{metric_P}) on projective space it suffices
to estimate
\[
\left\Vert \frac{(y,1)}{\left\Vert (y,1)\right\Vert }-\frac{(y^{\prime}%
,1)}{\left\Vert (y^{\prime},1)\right\Vert }\right\Vert =\left(  \frac
{y}{\left\Vert (y,1)\right\Vert }-\frac{y^{\prime}}{\left\Vert (y^{\prime
},1)\right\Vert },\frac{1}{\left\Vert (y,1)\right\Vert }-\frac{1}{\left\Vert
(y^{\prime},1)\right\Vert }\right)  .
\]
Note that $\left\Vert y\right\Vert -\left\Vert y^{\prime}\right\Vert
\leq\left\Vert y-y^{\prime}\right\Vert <\varepsilon$ and $\left\Vert
(y,1)\right\Vert -\left\Vert (y^{\prime},1)\right\Vert \leq\left\Vert
(y-y^{\prime},0)\right\Vert =\left\Vert y-y^{\prime}\right\Vert <\varepsilon$.
Hence we find $\delta(\varepsilon)$ with $\left\vert \delta(\varepsilon
)\right\vert <\varepsilon$ such that $\left\Vert (y^{\prime},1)\right\Vert
=\left\Vert (y,1)\right\Vert +\delta(\varepsilon)$. The last component
satisfies%
\[
\frac{1}{\left\Vert (y,1)\right\Vert }-\frac{1}{\left\Vert (y^{\prime
},1)\right\Vert }=\frac{\left\Vert (y^{\prime},1)\right\Vert -\left\Vert
(y,1)\right\Vert }{\left\Vert (y,1)\right\Vert \left\Vert (y^{\prime
},1)\right\Vert }<\varepsilon
\]
and%
\begin{align*}
\left\Vert \left\Vert (y^{\prime},1)\right\Vert y-\left\Vert (y,1)\right\Vert
y^{\prime}\right\Vert  &  =\left\Vert \left[  \left\Vert (y,1)\right\Vert
+\delta(\varepsilon)\right]  y-\left\Vert (y,1)\right\Vert y^{\prime
}\right\Vert \\
&  \leq\left\Vert (y,1)\right\Vert \left\Vert y-y^{\prime}\right\Vert
+\delta(\varepsilon)\left\Vert y\right\Vert <\left\Vert (y,1)\right\Vert
\varepsilon+\delta(\varepsilon)\left\Vert y\right\Vert .
\end{align*}
Hence, the other components satisfy%
\begin{align*}
\left\Vert \frac{y}{\left\Vert (y,1)\right\Vert }-\frac{y^{\prime}}{\left\Vert
(y^{\prime},1)\right\Vert }\right\Vert  &  =\frac{\left\Vert ~\left\Vert
(y^{\prime},1)\right\Vert y-\left\Vert (y,1)\right\Vert y^{\prime}\right\Vert
}{\left\Vert (y,1)\right\Vert ~\left\Vert (y^{\prime},1)\right\Vert }\leq
\frac{\left(  \left\Vert (y,1)\right\Vert \varepsilon+\delta(\varepsilon
)\left\Vert y\right\Vert \right)  }{\left\Vert (y,1)\right\Vert ~\left\Vert
(y^{\prime},1)\right\Vert }\\
&  <\varepsilon+\left\vert \delta(\varepsilon)\right\vert <2\varepsilon.
\end{align*}
This implies the desired estimate.
\end{proof}

The following proposition shows the semiflow $\Phi$ on $\mathcal{U}\times
M_{2}$ is conjugate to the semiflow $\mathbb{P}\Phi^{1}$ restricted to
$\mathcal{U}\times\mathbb{P}M_{2}^{1,1}$. Furthermore, a subset of
$\mathcal{U}\times M_{2}$ is unbounded if and only if the closure of its image
in $\mathcal{U}\times\mathbb{P}M_{2}^{1}$ intersects the projective equatorial bundle.

\begin{proposition}
\label{Proposition_e2}(i) The map $h^{1}$ defined in (\ref{h1}) is a conjugacy
of the semiflows $\Phi$ on $\mathcal{U}\times M_{2}$ and $\mathbb{P}\Phi^{1}$
restricted to $\mathcal{U}\times\mathbb{P}M_{2}^{1,1}$,%
\[
h^{1}(\Phi_{t}(u,y))=\mathbb{P}\Phi_{t}^{1}(u,y,1)\text{ for }t\geq0.
\]

(ii) For a subset $C\subset\mathcal{U}\times M_{2}$ the set $\{y\in
M_{2}\left\vert (u,y)\in C\text{ for some }u\in\mathcal{U}\right.  \}$ is
bounded if and only if $\overline{h^{1}(C)}\cap(\mathcal{U}\times
\mathbb{P}M_{2}^{1,0})=\varnothing$.
\end{proposition}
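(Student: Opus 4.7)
My plan for part (i) is to first establish that $h^1$ is a homeomorphism from $\mathcal{U}\times M_2$ onto $\mathcal{U}\times\mathbb{P}M_2^{1,1}$, and then verify the conjugacy equation by direct substitution. Bijectivity of $h^1$ follows because $\mathbb{P}(y,1)=\mathbb{P}(y',1)$ forces the scaling factor $\lambda$ in $(y,1)=\lambda(y',1)$ to equal $1$; the inverse is $(u,\mathbb{P}(z,\gamma))\mapsto(u,z/\gamma)$, which is continuous on the open subset $\{\gamma\neq 0\}$ of $\mathbb{P}M_2^1$. Using the formula $\varphi^1(t,y,1,u)=(\varphi(t,y,u),1)$ noted after (\ref{lift1}), the intertwining is then immediate:
\[
\mathbb{P}\Phi_t^1(u,y,1)=(\theta_t u,\mathbb{P}(\varphi(t,y,u),1))=h^1(\theta_t u,\varphi(t,y,u))=h^1(\Phi_t(u,y)).
\]

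For the forward direction of part (ii), I would argue as follows. If $\|y\|\leq R$ for all $(u,y)\in C$, then $\|(y,1)\|\leq\sqrt{R^2+1}$, so the last coordinate of the unit representative $(y,1)/\|(y,1)\|$ is uniformly bounded below by $\sigma:=1/\sqrt{R^2+1}>0$. Any point $\mathbb{P}(y^*,0)\in\mathbb{P}M_2^{1,0}$ has a unit representative whose last coordinate vanishes, so inspecting both sign choices in the formula for $d_{\mathbb{P}}$ in (\ref{metric_P}) shows that the projective distance from $\mathbb{P}(y,1)$ to any such point is at least $\sigma$, which precludes a limit of $h^1(C)$ in the equatorial bundle.

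For the converse I proceed by contrapositive. Assume the $y$-projection of $C$ is unbounded, producing $(u_k,y_k)\in C$ with $\|y_k\|\to\infty$, and by compactness of $\mathcal{U}$ extract a subsequence with $u_k\to u^*$. A direct computation using $\sqrt{\|y_k\|^2+1}-\|y_k\|=O(1/\|y_k\|)$ shows that $d_{\mathbb{P}}(\mathbb{P}(y_k,1),\mathbb{P}(y_k,0))=O(1/\|y_k\|)\to 0$, so the projective distance from $h^1(u_k,y_k)$ to $\mathcal{U}\times\mathbb{P}M_2^{1,0}$ tends to zero. The main obstacle is then upgrading this ``distance-to-zero'' statement to an actual limit point in $\overline{h^1(C)}\cap(\mathcal{U}\times\mathbb{P}M_2^{1,0})$: since $M_2$ is infinite dimensional, the unit sphere of $M_2^1$ is not norm compact, so the companion points $\mathbb{P}(y_k,0)$ need not cluster anywhere. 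The argument is completed by exploiting additional structure on the particular $C$ at hand (for instance, relative compactness of bounded pieces of $C$ when $C$ arises as an invariant set for $\Phi$ via Theorem \ref{Theorem_Selgrade2}), which supplies a norm-convergent subsequence of $y_k/\|y_k\|$ in $M_2$ and hence an actual limit point in the equatorial bundle.
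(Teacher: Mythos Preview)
Your treatment of part (i) is correct and essentially parallel to the paper's: you establish bijectivity the same way, and your verification of the conjugacy identity is identical. Your argument for continuity of the inverse via the explicit formula $(u,\mathbb{P}(z,\gamma))\mapsto(u,z/\gamma)$ is slightly slicker than the paper's direct estimate with the projective metric, but the content is the same.

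For part (ii) you have in fact gone further than the paper. The paper's own proof consists of exactly the observation you make first: writing the unit representative of $\mathbb{P}(y_k,1)$ as $\pm\bigl(y_k/\|(y_k,1)\|,\,1/\|(y_k,1)\|\bigr)$ and noting that $\|y_k\|\to\infty$ is equivalent to the distance from $(u^k,\mathbb{P}(y_k,1))$ to $\mathcal{U}\times\mathbb{P}M_2^{1,0}$ tending to zero. The paper stops there; it does not address the upgrade from ``distance tends to zero'' to ``there is an actual limit point in the equatorial bundle.''

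Your worry about that upgrade is not only legitimate, it points to a genuine issue with the statement itself. With the norm-based projective metric (\ref{metric_P}), the unit sphere of $M_2^1$ is not compact, and one can write down counterexamples: if $(e_n)$ is an orthonormal sequence in $M_2$ and $C=\{(u_0,n e_n):n\in\mathbb{N}\}$ for a fixed $u_0\in\mathcal{U}$, then the $M_2$-projection of $C$ is unbounded, yet the points $\mathbb{P}(n e_n,1)$ are pairwise separated in $d_{\mathbb{P}}$ by a distance close to $\sqrt{2}$, so $h^1(C)$ is already closed in $\mathcal{U}\times\mathbb{P}M_2^1$ and disjoint from the equator. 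Thus the equivalence as stated does not hold for arbitrary $C$; what the paper's argument (and yours) actually establishes is the weaker equivalence ``bounded $\Leftrightarrow$ $h^1(C)$ stays a positive distance from the equatorial bundle.'' Your proposed fix---invoking extra compactness coming from the Selgrade machinery when $C$ is of the special type arising later in the paper---is a reasonable way to recover the intended conclusion in applications, but it is not something the paper itself supplies at this point.
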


\begin{proof}
(i) The proof of Lemma \ref{Lemma_epsilon} shows that $h^{1}$ is continuous.
The first component of $h^{1}$ is the identity on $\mathcal{U}$. Concerning
the second component, suppose that $\mathbb{P}(y,1)=\mathbb{P}(y^{\prime},1)$,
i.e., $(y,1)=\lambda(y^{\prime},1)$ for some $\lambda\not =0$. This implies
$\lambda=1$ and hence $y=y^{\prime}$. Thus $h^{1}$ is injective, and it
certainly is surjective. It remains to show that the inverse of $h^{1}$ is
continuous. Suppose that
\[
d_{\mathbb{P}}(\mathbb{P}(y_{k},1),\mathbb{P}(y,1))=\min\left\{  \left\Vert
\frac{(y_{k},1)}{\left\Vert (y_{k},1)\right\Vert }-\frac{(y,1)}{\left\Vert
(y,1)\right\Vert }\right\Vert ,\left\Vert \frac{(y_{k},1)}{\left\Vert
(y_{k},1)\right\Vert }+\frac{(y,1)}{\left\Vert (y,1)\right\Vert }\right\Vert
\right\}  \rightarrow0.
\]
The second terms cannot converge to $0$, since the last component is greater
than or equal to $\frac{1}{\left\Vert (y,1)\right\Vert }$. Hence we know that%
\[
\left\Vert \frac{(y_{k},1)}{\left\Vert (y_{k},1)\right\Vert }-\frac
{(y,1)}{\left\Vert (y,1)\right\Vert }\right\Vert =\left\Vert \left(
\frac{y_{k}}{\left\Vert (y_{k},1)\right\Vert }-\frac{y}{\left\Vert
(y,1)\right\Vert },\frac{1}{\left\Vert (y_{k},1)\right\Vert }-\frac
{1}{\left\Vert (y,1)\right\Vert }\right)  \right\Vert \rightarrow0.
\]
The last components converge to $0$ implying $\left\Vert (y_{k},1)\right\Vert
\rightarrow\left\Vert (y,1)\right\Vert $. Since also the other components
converge to $0$ we conclude that $\left\Vert y_{k}-y\right\Vert \rightarrow0$.
This shows that $h^{1}$ is a homeomorphism. The conjugacy property follows by%
\[
h^{1}(\Phi_{t}(u,y))=(\theta_{t}u,\mathbb{P}\left(  \varphi(t,y,u),1\right)
)=(\theta_{t}u,\mathbb{P}\varphi^{1}(t,y,u,1))=\mathbb{P}\Phi_{t}%
^{1}(u,y,1),t\geq0.
\]

(ii) Consider a sequence $(u^{k},y_{k}),k\in\mathbb{N}$, in $C$. For the
images $h^{1}(u^{k},y_{k})=(u^{k},\mathbb{P}(y_{k},1))$, the points
$\mathbb{P}(y_{k},1)$ are determined by%
\[
\pm\left(  \frac{y_{k}}{\left\Vert (y_{k},1)\right\Vert ~},\frac{1}{\left\Vert
(y_{k},1)\right\Vert }\right)  .
\]
Then it follows that $\left\Vert y_{k}\right\Vert \rightarrow\infty$ for
$k\rightarrow\infty$, is equivalent to the property that the distances of
$(u^{k},\mathbb{P}(y_{k},1))$ to $\mathcal{U}\times\mathbb{P}M_{2}^{1,0}$
converge to $0$.
\end{proof}

\section{Hyperbolic semiflows\label{Section7}}

This section analyzes hyperbolic control semiflows for delay systems. Again we
assume throughout that $\det A_{p}\not =0$.

Recall that a hyperbolic homogeneous delay equation yields by Theorem
\ref{Theorem_hyperbolic_delay} a spectral decomposition of $M_{2}$ into a
finite dimensional subspace $V^{+}$ and a stable subspace $V^{-}$. Since the
homogeneous part $\Phi^{0}$ of the associated control semiflow is the product
semiflow $\Phi_{t}^{0}(u,y)=(\theta_{t}u,T(t)y)$, this flow is also hyperbolic
with the following decomposition into closed subbundles $\mathcal{U}\times
M_{2}=\mathcal{V}^{-}\oplus\mathcal{V}^{+}$, $\mathcal{V}^{-}:=\mathcal{U}%
\times V^{-}$ and $\mathcal{V}^{+}:=\mathcal{U}\times V^{+}$. There are
constants $\alpha,K>0$ such that%
\begin{align*}
\left\Vert \Phi_{t}^{0}(u,y^{-})\right\Vert  &  =\left\Vert T(t)y^{-}%
\right\Vert \leq Ke^{-\alpha t}\left\Vert y^{-}\right\Vert \text{ for }%
t\geq0\text{ and }(u,y^{-})\in\mathcal{V}^{-},\\
\left\Vert \Phi_{t}^{0}(u,y^{+})\right\Vert  &  =\left\Vert T(t)y^{+}%
\right\Vert \leq Ke^{\alpha t}\left\Vert y^{+}\right\Vert \text{ for }%
t\leq0\text{ and }(u,y^{+})\in\mathcal{V}^{+}.
\end{align*}
Since $\dim V^{+}<\infty$ the solution map $T(t)$ is an isomorphism on the
invariant subspace $V^{+}$ and hence, for every $y\in V^{+}$, there exists an
entire solution $\varphi^{0}(t,y),t\in\mathbb{R}$.

Next we consider the inhomogeneous equation (\ref{D2}). Let $\pi^{\pm}%
:M_{2}\rightarrow V^{\pm}$ be the associated projections and define $t\geq0$%
\[
\varphi^{\pm}(t,u,y^{\pm}):=T(t)y^{\pm}+\int_{0}^{t}T(t-s)\pi^{\pm
}B(u(s),\ldots,u(s-h_{p})ds\text{ for }(u,y^{\pm})\in\mathcal{U}\times V^{\pm
},
\]
and define associated affine semiflows on $\mathcal{V}^{\pm}:=\mathcal{U}%
\times V^{\pm}$ by%
\[
\Phi_{t}^{\pm}(u,y):=\left(  \theta_{t}u,\varphi^{\pm}(t,u,y)\right)  \text{
for }t\geq0\text{ and }(u,x^{\pm})\in\mathcal{V}^{\pm}.
\]
Our next goal is to prove that for every $u\in\mathcal{U}$ there exists a
unique bounded solution of $\Phi$. We start with the stable part.

\begin{lemma}
\label{Lemma7.2}Assume that the linear part $\Phi^{0}$ of the affine semiflow
$\Phi$ is hyperbolic. Then for every $u\in\mathcal{U}$ there exists a unique
entire bounded solution $(\theta_{t}u,e^{-}(u,t)),\allowbreak t\in\mathbb{R}$
of the affine semiflow $\Phi^{-}$. It satisfies $e^{-}(\theta_{t}%
u,0)=e^{-}(u,t)$ for $t\in\mathbb{R}$, and the map $e^{-}:\mathcal{U}%
\times\mathbb{R}\rightarrow M_{2}$ is continuous.
\end{lemma}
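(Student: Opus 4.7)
The plan is to construct $e^{-}$ by the stable-manifold type formula
\[
e^{-}(u,t):=\int_{-\infty}^{t}T(t-\sigma)\pi^{-}B\bigl(u(\sigma),u(\sigma-h_{1}),\ldots,u(\sigma-h_{p})\bigr)\,d\sigma
\]
for $u\in\mathcal{U}$ and $t\in\mathbb{R}$, and then verify boundedness, the entire-solution property, the shift identity, uniqueness, and continuity. Convergence of the improper integral in $V^{-}$ follows from the exponential estimate $\|T(\sigma)y^{-}\|\leq Ke^{-\alpha\sigma}\|y^{-}\|$ on $V^{-}$ (Theorem~\ref{Theorem_hyperbolic_delay}) together with compactness of $\Omega$, and yields the uniform bound $\|e^{-}(u,t)\|\leq(K/\alpha)\|\pi^{-}B\|\max_{v\in\Omega}\|v\|$.

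To see that $t\mapsto(\theta_{t}u,e^{-}(u,t))$ is an orbit of $\Phi^{-}$, I would split the integral defining $e^{-}(u,t+\tau)$ at $\sigma=t$, factor $T(t+\tau-\sigma)=T(\tau)T(t-\sigma)$ on the $(-\infty,t]$-piece, and substitute $\sigma=t+s$ on the $[t,t+\tau]$-piece; the two contributions recombine to $\varphi^{-}(\tau,\theta_{t}u,e^{-}(u,t))$. The same change of variables applied in the defining formula with $u$ replaced by $\theta_{t}u$ produces $e^{-}(\theta_{t}u,0)=e^{-}(u,t)$. For uniqueness, if $\tilde{e}(u,\cdot)$ is another entire bounded solution in $V^{-}$, then $d(t):=e^{-}(u,t)-\tilde{e}(u,t)$ satisfies the homogeneous relation $d(t)=T(t-s)d(s)$ for $t\geq s$; hence $\|d(t)\|\leq Ke^{-\alpha(t-s)}\sup_{r}\|d(r)\|\to 0$ as $s\to-\infty$, so $d\equiv 0$.

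The principal technical point is continuity of $e^{-}:\mathcal{U}\times\mathbb{R}\to M_{2}$, since $\mathcal{U}$ carries the weak$^{\ast}$ topology. The tactic is truncation combined with a step-function approximation. Substituting $r=\sigma-h_{i}$ I would rewrite
\[
e^{-}(u,t)=\sum_{i=0}^{p}\int_{-\infty}^{t-h_{i}}T(t-h_{i}-r)\pi^{-}B_{i}u(r)\,dr.
\]
For a convergent sequence $(u_{k},t_{k})\to(u_{0},t_{0})$, the tail $\int_{-\infty}^{-N}$ of each summand has norm at most $(K/\alpha)\|\pi^{-}B_{i}\|\max_{\Omega}\|\cdot\|\,e^{-\alpha(t_{k}-h_{i}+N)}$, which is uniformly small for $N$ large since $t_{k}$ is bounded. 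On the compact window $[-N,t_{k}-h_{i}]$ the operator kernel $r\mapsto T(t_{k}-h_{i}-r)\pi^{-}B_{i}\in\mathcal{L}(\mathbb{R}^{m},V^{-})$ is strongly continuous in $(r,t_{k})$, and since its domain $\mathbb{R}^{m}$ is finite dimensional this upgrades to norm continuity and hence to uniform approximability by piecewise-constant kernels. Against such a kernel the integral reduces to finitely many vector averages $\int_{I_{j}}u_{k}(r)\,dr$, which converge to $\int_{I_{j}}u_{0}(r)\,dr$ by weak$^{\ast}$ convergence. Letting $N\to\infty$ concludes the proof.
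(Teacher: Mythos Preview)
Your proposal is correct and, for the construction of $e^{-}$, the uniform bound, the verification that it is an entire solution of $\Phi^{-}$, the shift identity $e^{-}(\theta_{t}u,0)=e^{-}(u,t)$, and the uniqueness argument, it follows essentially the same route as the paper.

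The one substantive difference is in the continuity argument. The paper splits $e^{-}(u,t)-e^{-}(u^{0},t_{0})$ into a term with the time kernel varying and a term with the control varying; the first is handled by strong continuity of $T(\cdot)$ and Lebesgue's theorem, and for the second the paper invokes the earlier continuity result Theorem~\ref{Theorem_continuous}(i) (proved via the variation-of-constants formula and Arzel\`a--Ascoli) together with dominated convergence. You instead work directly: after the substitution $\sigma=r+h_{i}$ you cut off an exponentially small tail, and on the remaining compact window you exploit that the operator kernel $r\mapsto T(t-h_{i}-r)\pi^{-}B_{i}$ has finite-dimensional domain $\mathbb{R}^{m}$, so strong continuity of $T(\cdot)$ upgrades to norm continuity and hence to uniform approximation by step functions; weak$^{\ast}$ convergence of $u_{k}$ then handles the finitely many averages $\int_{I_{j}}u_{k}(r)\,dr$. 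Your argument is more self-contained and makes the role of the weak$^{\ast}$ topology completely explicit; the paper's version is shorter because it recycles the continuity machinery already built for the control semiflow. One small point you leave implicit is the drift of the upper integration limit $t_{k}-h_{i}$; this contributes a term of order $|t_{k}-t_{0}|$ against a bounded integrand and is easily absorbed, but it is worth a sentence when you write the argument out in full.
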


\begin{proof}
First we show that the linear semiflow $T(\cdot)$ restricted to $V^{-}$ has
only the trivial entire bounded solution. Any entire bounded solution
$\varphi^{0}(t,y^{-}),t\in\mathbb{R}$ satisfies, for $t\geq0$%
\begin{align*}
\left\Vert y^{-}\right\Vert  &  =\left\Vert \varphi^{0}(0,y^{-})\right\Vert
=\left\Vert \varphi^{0}(t,\varphi^{0}(-t,y^{-})\right\Vert \leq Ke^{-\alpha
t}\left\Vert \varphi^{0}(-t,y^{-})\right\Vert \\
&  \leq Ke^{-\alpha t}\sup\nolimits_{s\leq0}\left\Vert \varphi^{0}%
(s,y^{-})\right\Vert .
\end{align*}
The right hand side converges to $0$ for $t\rightarrow\infty$, hence $y^{-}%
=0$. It immediately follows that there is at most a single entire bounded
solution for $\Phi^{-}$ since the difference of two bounded entire solutions
in $V^{-}$ is a bounded entire solution in $V^{-}$ of the linear semiflow. We
claim that%
\[
e^{-}(u,t):=\int_{-\infty}^{t}T(t-s)\pi^{-}B(u(s),\ldots,u(s-h_{p}%
))ds,t\in\mathbb{R},
\]
is the desired solution. The integral exists since $t-s\geq0$ for
$s\in(-\infty,t)$ and for all $u\in\mathcal{U}$ and $s\leq t$%
\begin{align*}
&  \left\Vert T(t-s)\pi^{-}B(u(s),\ldots,u(s-h_{p}))\right\Vert \leq
Ke^{-\alpha(t-s)}\left\Vert \pi^{-}B(u(s),\ldots,u(s-h_{p}))\right\Vert \\
&  \leq Ke^{-\alpha(t-s)}\left\Vert \pi^{-}\right\Vert (p+1)\max
_{i=0,\ldots,p}\left\Vert B_{i}\right\Vert \max_{u\in\Omega}\left\Vert
u\right\Vert .
\end{align*}
This is a solution since for $t_{0}\in\mathbb{R}$ and $t\geq t_{0}\geq0$ it
satisfies formula (\ref{mild}) for the initial value $e^{-}(u,t_{0})$:%
\begin{align*}
e^{-}(u,t)  &  =\int_{-\infty}^{t}T(t-s)\pi^{-}B(u(s),\ldots,u(s-h_{p}))ds\\
&  =T(t-t_{0})\int_{-\infty}^{t_{0}}T(t_{0}-s)\pi^{-}B(u(s),\ldots
,u(s-h_{p}))ds\\
&  \qquad+\int_{t_{0}}^{t}T(t-s)\pi^{-}B(u(s),\ldots,u(s-h_{p}))ds\\
&  =T(t-t_{0})e^{-}(u,t_{0}))+\int_{t_{0}}^{t}T(t-s)\pi^{-}B(u(s),\ldots
,u(s-h_{p}))ds.
\end{align*}
Note that for $t\in\mathbb{R}$%
\begin{align*}
e^{-}(\theta_{t}u,0)  &  =\int_{-\infty}^{0}T(-s)\pi^{-}B(u(t+s),\ldots
,u(t+s-h_{p}))ds\\
&  =\int_{-\infty}^{t}T(t-s)\pi^{-}B(u(s),\ldots,u(s-h_{p}))ds=e^{-}(u,t).
\end{align*}
In order to prove continuity let $u,u^{0}\in\mathcal{U}$ and $t,t_{0}%
\in\mathbb{R}$. Then%
\begin{align*}
&  \left\Vert e^{-}(u,t)-e^{-}(u^{0},t_{0})\right\Vert \\
&  =\left\Vert \int_{-\infty}^{t}T(t-s)\pi^{-}B(u(s),\ldots,u(s-h_{p}%
))ds\right. \\
&  \qquad\left.  -\int_{-\infty}^{t_{0}}T(t_{0}-s)\pi^{-}B(u^{0}%
(s),\ldots,u^{0}(s-h_{p}))ds\right\Vert \\
&  \leq\left\Vert \int_{\mathbb{R}}\left[  \chi_{(-\infty,t]}(s)T(t-s)-\chi
_{(-\infty,t_{0}]}(s)T(t_{0}-s)\right]  \pi^{-}B(u(s),\ldots,u(s-h_{p}%
))ds\right\Vert \\
&  \qquad+\left\Vert \int_{-\infty}^{t_{0}}T(t_{0}-s)\pi^{-}B(u(s)-u^{0}%
(s),\ldots,u(s-h_{p})-u^{0}(s-h_{p}))ds\right\Vert .
\end{align*}
For $(t,u)\rightarrow(t_{0},u^{0})$ the first summand converges to $0$ by
strong continuity of $T(\cdot)$ and Lebesgue's theorem. The integrand in the
second summand is%
\[
\varphi^{-}(t_{0}-s,0,u)-\varphi^{-}(t_{0}-s,0,u^{0}).
\]
For $u\rightarrow u^{0}$ in $\mathcal{U}$, Theorem \ref{Theorem_continuous}(i)
implies that this converges to $0$, for every $s\in(-\infty,t_{0}]$. Again
Lebesgue's theorem implies that the integral converges to $0$.
\end{proof}

An analogous result holds for the unstable part.

\begin{lemma}
\label{Lemma7.3}Assume that the linear part $\Phi^{0}$ of the affine semiflow
$\Phi$ is hyperbolic. Then for every $u\in\mathcal{U}$ there exists a unique
entire bounded solution $(\theta_{t}u,e^{+}(u,t)),\allowbreak t\in\mathbb{R}$
of the affine semiflow $\Phi$. It satisfies $e^{+}(\theta_{t}u,0)=e^{+}(u,t)$
for $t\in\mathbb{R}$, and the map $e^{+}:\mathcal{U}\times\mathbb{R}%
\rightarrow M_{2}$ is continuous.
\end{lemma}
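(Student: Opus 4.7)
The plan is to mirror the argument of Lemma \ref{Lemma7.2}, reversing the direction of time and exploiting that $T(t)$ restricted to the finite-dimensional subspace $V^{+}$ is an isomorphism, so that $T(-t):=T(t)^{-1}$ is a well-defined operator on $V^{+}$ for $t\geq0$ satisfying the bound $\left\Vert T(-t)z^{+}\right\Vert \leq K^{-1}e^{-\alpha t}\left\Vert z^{+}\right\Vert $ from the remark following Theorem \ref{Theorem_hyperbolic_delay}.

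First I would establish uniqueness. Any two entire bounded solutions of $\Phi^{+}$ (with the same base $\theta_{t}u$) differ by an entire bounded solution of the linear semigroup $T(\cdot)$ in $V^{+}$. But if $y^{+}\in V^{+}$ is the value at time $0$ of such an entire bounded solution, then $\left\Vert T(t)y^{+}\right\Vert \geq Ke^{\alpha t}\left\Vert y^{+}\right\Vert \rightarrow\infty$ as $t\rightarrow\infty$ unless $y^{+}=0$, forcing uniqueness.

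For existence I would define explicitly
\[
e^{+}(u,t):=-\int_{t}^{\infty}T(t-s)\pi^{+}B(u(s),\ldots,u(s-h_{p}))ds,\,t\in\mathbb{R},
\]
where for $s\geq t$ the operator $T(t-s)=T(-(s-t))$ is the inverse on $V^{+}$. The integral converges absolutely, since
\[
\left\Vert T(t-s)\pi^{+}B(u(s),\ldots,u(s-h_{p}))\right\Vert \leq K^{-1}e^{-\alpha(s-t)}\left\Vert \pi^{+}\right\Vert (p+1)\max_{i}\left\Vert B_{i}\right\Vert \max_{v\in\Omega}\left\Vert v\right\Vert ,
\]
and this bound simultaneously shows that $e^{+}(u,\cdot)$ is bounded on $\mathbb{R}$. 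A splitting of the integral at any $t_{0}\leq t$ (together with the semigroup property on $V^{+}$) verifies the mild-solution identity (\ref{mild}) for initial value $e^{+}(u,t_{0})$, so $t\mapsto(\theta_{t}u,e^{+}(u,t))$ is an entire trajectory of $\Phi^{+}$. The shift identity $e^{+}(\theta_{t}u,0)=e^{+}(u,t)$ follows from a direct change of variables.

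The remaining task is continuity of $e^{+}:\mathcal{U}\times\mathbb{R}\rightarrow M_{2}$. As in Lemma \ref{Lemma7.2} I would split, for $(t,u)\to(t_{0},u^{0})$,
\begin{align*}
\left\Vert e^{+}(u,t)-e^{+}(u^{0},t_{0})\right\Vert  & \leq\left\Vert \int_{\mathbb{R}}\bigl[\chi_{[t,\infty)}(s)T(t-s)-\chi_{[t_{0},\infty)}(s)T(t_{0}-s)\bigr]\pi^{+}B(u(s),\ldots)\,ds\right\Vert \\
 & \quad+\left\Vert \int_{t_{0}}^{\infty}T(t_{0}-s)\pi^{+}B(u(s)-u^{0}(s),\ldots)\,ds\right\Vert .
\end{align*}
The first summand converges to $0$ by strong continuity of $T(-\cdot)$ on $V^{+}$ (which holds because $V^{+}$ is finite dimensional and $T(\cdot)$ is a group there) combined with Lebesgue's dominated convergence theorem, using the dominating function $s\mapsto K^{-1}e^{-\alpha(s-t_{0}+1)}\cdot C$ for $t$ near $t_{0}$. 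The second summand equals $\varphi^{+}(t_{0}-s,0,u)-\varphi^{+}(t_{0}-s,0,u^{0})$ integrated against the exponentially decaying factor; each integrand tends to $0$ pointwise by Theorem \ref{Theorem_continuous}(i) and is dominated by the same exponential majorant, so dominated convergence applies again. The main technical obstacle is verifying the dominated convergence in the first term, where both the cut-off indicator and the operator $T(t-s)$ (defined via the inverse on $V^{+}$) vary with $t$; this is handled by working entirely inside the finite-dimensional invariant subspace $V^{+}$, where continuity of $t\mapsto T(t)$ in operator norm and the uniform exponential bound are immediate.
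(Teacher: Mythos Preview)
Your proposal is correct and follows essentially the same strategy as the paper: mirror Lemma~\ref{Lemma7.2} on the unstable side, exploiting that $T(\cdot)$ is a group on the finite-dimensional invariant subspace $V^{+}$, and defer the remaining verifications (shift identity, continuity) to the same arguments as in the stable case. One point worth flagging: your explicit formula $e^{+}(u,t)=-\int_{t}^{\infty}T(t-s)\pi^{+}B(\ldots)\,ds$ is the standard one and, as you verify, satisfies the mild-solution identity; the paper instead writes $e^{+}(u,t)=\int_{-\infty}^{t}T(t+s)\pi^{+}B(\ldots)\,ds$, which appears to be a typographical slip (with $T(t+s)$ rather than $T(t-s)$ it does not reproduce the variation-of-constants formula), so your version is in fact the correct expression of the intended construction.
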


\begin{proof}
Let $T(t)y^{+}=\varphi^{0}(t,y^{+}),t\in\mathbb{R}$, be a bounded solution for
$T(t)$ restricted to $V^{+}$. Then it follows, for $t\geq0$,%
\[
\left\Vert \varphi^{0}(t,y^{+})\right\Vert \geq Ke^{\alpha t}\left\Vert
y^{+}\right\Vert \rightarrow\infty\text{ for }k\rightarrow\infty.
\]
This implies $y^{+}=0$. As above there is at most a single entire bounded
solution for $\Phi^{+}$ since the difference of two bounded entire solutions
is a bounded entire solution for the homogeneous semiflow.

Next we show that the entire bounded solution is given by%
\[
e^{+}(u,t)=\int_{-\infty}^{t}T(t+s)\pi^{+}B(u(s),\ldots,u(s-h_{p}%
))ds,t\in\mathbb{R}.
\]
Observe that the integrand is well defined, since $\pi^{+}$ is a map onto the
finite dimensional subspace $V^{+}$ and $T(t+s)$ is an isomorphism on $V^{+}$.
Existence of the integral follows from%
\[
\left\Vert T(t+s)y\right\Vert \leq K^{-1}e^{\alpha s}\left\Vert
T(t)y\right\Vert \text{ for }s\leq0\text{ and }y\in V^{+}.
\]
The other assertions follow as in the proof of Lemma \ref{Lemma7.2}.
\end{proof}

A combination of the two previous lemmas establishes the desired unique
existence of entire bounded solutions and shows that the affine semiflow is
conjugate to its homogeneous part; cf. Colonius and Santana \cite[Corollary 1
and Theorem 2.5]{ColSan11} for an analogous result in finite dimensions.

\begin{proposition}
\label{Proposition7.3}Suppose that the linear part $\Phi^{0}$ of the affine
semiflow $\Phi$ is hyperbolic.

(i) Then, for every $u\in\mathcal{U}$, there is a unique bounded entire
solution given by $(\theta_{t}u,e(u,t)),\allowbreak t\in\mathbb{R}$ for the
affine semiflow $\Phi$, the map $e:\mathcal{U}\times\mathbb{R}\rightarrow
M_{2}$ is continuous, and $e(\theta_{t}u,0)=e(u,t)$ for $t\in\mathbb{R}$.

(ii) The affine semiflow $\Phi$ and its linear part are conjugate by the
homeomorphism%
\begin{equation}
H:\mathcal{U}\times M_{2}\rightarrow\mathcal{U}\times M_{2}%
:H(u,y):=(u,y-e(u,0))\text{ for }(u,y)\in\mathcal{U}\times M_{2}. \label{H}%
\end{equation}

\end{proposition}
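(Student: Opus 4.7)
The plan is to assemble the two half-solutions furnished by Lemmas \ref{Lemma7.2} and \ref{Lemma7.3} into a single bounded entire solution. I would set $e(u,t) := e^{+}(u,t) + e^{-}(u,t)$. Since the projections $\pi^{\pm}$ onto the $T(\cdot)$-invariant subspaces $V^{\pm}$ commute with $T(t)$, and the convolution integral defining $\varphi(t,0,u)$ splits as $\pi^{+}\varphi(t,0,u) + \pi^{-}\varphi(t,0,u)$ termwise, the sum $e(u,\cdot)$ is a bounded entire trajectory of the full affine semiflow $\Phi$ over $u$, not merely of $\Phi^{+}$ and $\Phi^{-}$ considered separately. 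Continuity of $e:\mathcal{U}\times\mathbb{R}\rightarrow M_{2}$ and the shift identity $e(\theta_{t}u,0)=e(u,t)$ descend directly from the corresponding properties in the two lemmas.

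For uniqueness, if $y_{1}(\cdot)$ and $y_{2}(\cdot)$ are two bounded entire solutions of $\Phi$ over the same $u\in\mathcal{U}$, then by the affine splitting their difference $z(t):=y_{1}(t)-y_{2}(t)$ is a bounded entire solution of the homogeneous semiflow $T(\cdot)$, because the inhomogeneous contributions cancel. Decomposing $z(t)=\pi^{+}z(t)+\pi^{-}z(t)$ according to $M_{2}=V^{+}\oplus V^{-}$ and using invariance of $V^{\pm}$, each component is an entire bounded solution of $T(\cdot)$ restricted to $V^{\pm}$. The opening arguments in the proofs of Lemmas \ref{Lemma7.2} and \ref{Lemma7.3} (exponential contraction on $V^{-}$ forwards in time, on $V^{+}$ backwards in time via $T(-t)$) then force $\pi^{+}z\equiv 0$ and $\pi^{-}z\equiv 0$, giving $y_{1}=y_{2}$.

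For part (ii), the map $H$ in (\ref{H}) is continuous because $e$ is, and its obvious candidate inverse $(u,z)\mapsto(u,z+e(u,0))$ is continuous for the same reason, so $H$ is a homeomorphism. To verify the conjugacy $H\circ\Phi_{t}=\Phi_{t}^{0}\circ H$, I would expand
\[
H(\Phi_{t}(u,y)) = (\theta_{t}u,\, T(t)y + \varphi(t,0,u) - e(\theta_{t}u,0)), \qquad \Phi_{t}^{0}(H(u,y)) = (\theta_{t}u,\, T(t)y - T(t)e(u,0))
\]
and use the shift identity $e(\theta_{t}u,0)=e(u,t)$ from (i). The required equality of second components then reduces to $e(u,t) = T(t)e(u,0) + \varphi(t,0,u)$, which is precisely the mild-solution relation (\ref{mild}) expressing that $e(u,\cdot)$ is the entire solution of $\Phi$ starting at $e(u,0)$ with control $u$, hence automatic from (i).

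The main obstacle I expect is the bookkeeping that $e^{+}+e^{-}$ is genuinely a solution of the affine semiflow $\Phi$ and not merely a formal sum of solutions of $\Phi^{\pm}$. This relies on the projections $\pi^{\pm}$ commuting with both $T(t)$ and the convolution integral in (\ref{mild}), which follows from invariance of $V^{\pm}$ under $T(t)$ together with linearity of $B$; once this is checked, both the uniqueness argument in (i) and the conjugacy identity in (ii) fall out from the computations sketched above.
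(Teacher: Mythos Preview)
Your proposal is correct and follows essentially the same route as the paper: define $e(u,t)=e^{+}(u,t)+e^{-}(u,t)$ from Lemmas \ref{Lemma7.2} and \ref{Lemma7.3}, obtain uniqueness by projecting the difference of two bounded entire solutions onto $V^{\pm}$, and verify the conjugacy by the same direct computation using $e(\theta_{t}u,0)=e(u,t)$ and the affine splitting $\varphi(t,y,u)=T(t)y+\varphi(t,0,u)$. You are in fact more explicit than the paper about the one point it leaves implicit---that $e^{+}+e^{-}$ is a genuine trajectory of $\Phi$ because $\pi^{\pm}$ commute with $T(t)$ and the inhomogeneous integral---so your identified ``main obstacle'' is exactly the right thing to check.
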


\begin{proof}
(i) Lemmas \ref{Lemma7.2} and \ref{Lemma7.3} imply the existence of unique
bounded entire solutions $(\theta_{t}u,e^{\pm}(u,t)),\allowbreak
t\in\mathbb{R}$. This yields the bounded entire solution for $\Phi$%
\[
(\theta_{t}u,e(u,t))=(\theta_{t}u,e^{+}(u,t)+e^{-}(u,t))=(\theta_{t}%
u,e^{+}(u,t))+(\theta_{t}u,e^{-}(u,t)),~t\in\mathbb{R}.
\]
Since any bounded entire solution for $\Phi$ induces bounded entire solutions
in $\mathcal{U\times}V^{\pm}$, uniqueness follows. Furthermore, the map
$\mathcal{U}\times\mathbb{R}\rightarrow\mathcal{U}\times M_{2}:(u,t)\mapsto
(u,e(u,t))$ is continuous.

(ii) The map $H$ is continuous and bijective with continuous inverse
\[
H^{-1}(u,y):=(u,y+e(u,0))\text{ for }(u,y)\in\mathcal{U}\times M_{2}.
\]
The conjugation property follows from%
\begin{align*}
H(\Phi_{t}(u,y))  &  =H(\theta_{t}u,\varphi(t,u,y))=(\theta_{t}u,\varphi
(t,u,y)-e(\theta_{t}u,0))\\
&  =(\theta_{t}u,\varphi(t,u,y)-e(u,t))=(\theta_{t}u,\varphi(t,u,y)-\varphi
(t,u,e(u,0)))\\
&  =(\theta_{t}u,\varphi^{0}(t,y-e(u,0))=\Phi_{t}^{0}(H(u,y)).
\end{align*}

\end{proof}

The following lemma shows that the chain recurrent set of uniformly hyperbolic
linear systems is trivial. Antunez, Mantovani, and Var\~{a}o \cite[Corollary
2.11]{AntMV22} prove an analogous result for hyperbolic linear operators on
Banach spaces.

\begin{lemma}
\label{Lemma_trivial}Suppose that $\Phi^{0}$ is hyperbolic with decomposition
$\mathcal{V}=\mathcal{U}\times M_{2}=\mathcal{V}^{+}\oplus\mathcal{V}^{-}$.
Then the chain recurrent set of $\Phi^{0}$ equals $\mathcal{U}\times
\{0_{M_{2}}\}$.
\end{lemma}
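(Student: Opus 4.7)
The plan is to prove the two inclusions $\mathcal{U}\times\{0_{M_2}\}\subset\mathcal{R}\subset\mathcal{U}\times\{0_{M_2}\}$ separately, using the hyperbolic splitting $M_2=V^+\oplus V^-$ and its exponential estimates. The easy direction is that $\mathcal{U}\times\{0_{M_2}\}$ is chain recurrent: since $0_{M_2}$ is a fixed point of the semigroup $T(\cdot)$, any $(\varepsilon,\tau)$-chain in $\mathcal{U}$ for the shift $\theta$ lifts trivially, under the product metric (\ref{metric_V}), to an $(\varepsilon,\tau)$-chain in $\mathcal{U}\times\{0_{M_2}\}$ for $\Phi^0=(\theta,T)$. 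Chain transitivity of $\theta$ on $\mathcal{U}$, cited in Section \ref{Section4}, therefore yields chain transitivity, and in particular chain recurrence, of every point of $\mathcal{U}\times\{0_{M_2}\}$.

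For the reverse inclusion, I would start from a chain recurrent point $(u,y)\in\mathcal{U}\times M_2$ and project onto the two summands. Writing $y=y^++y^-$ with $y^\pm:=\pi^\pm y\in V^\pm$, any $(\varepsilon,\tau)$-chain $((u_j,y_j),\tau_j)_{j=0}^{q}$ from $(u,y)$ to $(u,y)$ satisfies $\|T(\tau_j)y_j-y_{j+1}\|<\varepsilon$, hence $\|T(\tau_j)y_j^\pm-y_{j+1}^\pm\|\leq\|\pi^\pm\|\varepsilon$. Thus $(y_j^\pm,\tau_j)$ is a $(\|\pi^\pm\|\varepsilon,\tau)$-chain from $y^\pm$ to $y^\pm$ for the restricted linear semigroup $T(\cdot)|_{V^\pm}$, and so each of $y^+,y^-$ is chain recurrent for the respective restriction. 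It then suffices to prove the auxiliary statement: the only chain recurrent point of a uniform contraction is the origin.

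On $V^-$ this is a standard telescoping contraction argument. Fix $\tau$ so large that $Ke^{-\alpha\tau}\leq 1/2$; for any $(\delta,\tau)$-chain $z_0=y^-,z_1,\ldots,z_q=y^-$ the bound $\|z_{j+1}\|\leq\tfrac12\|z_j\|+\delta$ iterates to $\|y^-\|=\|z_q\|\leq 2^{-q}\|y^-\|+2\delta$, which (since $q\geq 1$) forces $\|y^-\|\leq 4\delta$, and letting $\delta\downarrow 0$ gives $y^-=0$. On $V^+$, since $\dim V^+<\infty$ the maps $T(t)$ are invertible on $V^+$ with $\|T(-t)|_{V^+}\|\leq K^{-1}e^{-\alpha t}$ for $t\geq 0$, as noted in the remark following Theorem \ref{Theorem_hyperbolic_delay}. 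Applying $T(-\tau_j)$ to the chain inequality yields $\|y_j^+-T(-\tau_j)y_{j+1}^+\|\leq K^{-1}e^{-\alpha\tau_j}\delta$, so $\|y_j^+\|\leq\tfrac12\|y_{j+1}^+\|+K^{-1}\delta$ once $\tau$ is large enough; running the same telescoping backwards along the closed chain forces $y^+=0$, and together with $y^-=0$ this finishes the proof.

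I do not expect any real obstacles: everything reduces to the hyperbolic estimates and the continuity of the projections $\pi^\pm$. The only step that requires mild care is the reverse-time bookkeeping on $V^+$, where one uses invertibility of $T(t)|_{V^+}$ to iterate backwards through a chain whose forward direction is expanding; this is legitimate precisely because $V^+$ is finite dimensional and $0\notin\sigma(T(t))$.
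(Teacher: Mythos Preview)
Your proof is correct and follows essentially the same approach as the paper: both show the easy inclusion directly, project an arbitrary chain onto the stable and unstable summands, and then run a telescoping argument on each summand. The only minor variation is on $V^{+}$: the paper works forward using the lower bound $\|T(t)y^{+}\|\geq Ke^{\alpha t}\|y^{+}\|$ to derive $\|y_{j+1}\|\geq\beta\|y_{j}\|-\varepsilon$ with $\beta>1$, whereas you invert $T(\tau_{j})$ and run the contraction backwards; these are equivalent, and your explicit handling of the projection factor $\|\pi^{\pm}\|$ is in fact more careful than the paper's own Step~(iii).
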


\begin{proof}
It is clear that $\mathcal{U}\times\{0_{M_{2}}\}$ is contained in the chain
recurrent set.

(i) First we show that the chain recurrent set of $\mathcal{V}^{-}$ equals
$\mathcal{U}\times\{0_{M_{2}}\}$. Suppose, by way of contradiction, that
$(u,y)\in\mathcal{V}^{-}$ with $y\not =0$ is chain recurrent and consider for
$\varepsilon\in(0,1),\tau>0$ an $(\varepsilon,\tau)$-chain from $(u,y)$ to
$(u,y)$ given by%
\[
\tau_{0},\ldots,\tau_{q-1}\geq\tau\,\text{\ and }d(\Phi_{\tau_{i}}^{0}%
(u^{i},y_{i}),(u^{i+1},y_{i+1}))<\varepsilon\text{ for }i=0,\ldots,q-1.
\]
Let $\tau>0$ such that $\beta:=Ke^{-\alpha\tau}<1$. Then $\left\Vert
y_{i+1}-\varphi(\tau_{i},u^{i},y_{i})\right\Vert <\varepsilon$ implies%
\begin{align*}
\left\Vert y\right\Vert  &  =\left\Vert y_{q}\right\Vert \leq\left\Vert
\varphi(\tau_{q-1},u^{q-1},y_{q-1})\right\Vert +\varepsilon\leq\beta\left\Vert
y_{q-1}\right\Vert +\varepsilon\\
&  \leq\beta^{2}\left\Vert \varphi(\tau_{q-2},u^{q-2},y_{q-2})\right\Vert
+\beta\varepsilon+\varepsilon\\
&  \leq\beta^{q}\left\Vert y\right\Vert +\beta^{q-2}\varepsilon^{q-2}%
+\cdots+\beta\varepsilon+\varepsilon\\
&  <\beta^{q}\left\Vert y\right\Vert +(1-\beta\varepsilon)^{-1}-1+\varepsilon.
\end{align*}
Since $(1-\beta\varepsilon)^{-1}-1+\varepsilon\rightarrow0$ for $\varepsilon
\rightarrow0$ we may take $\varepsilon>0$ small enough such that for any
$q\geq1$,%
\[
(1-\beta^{q})\left\Vert y\right\Vert >(1-\beta\varepsilon)^{-1}-1+\varepsilon
,
\]
and hence $\left\Vert y\right\Vert >\beta^{q}\left\Vert y\right\Vert
+(1-\beta\varepsilon)^{-1}-1+\varepsilon$. This contradiction shows that, for
$\varepsilon>0$ small enough and $\tau>0$ large enough, there are no
$(\varepsilon,\tau)$-chains from $(u,y)$ to $(u,y)$.

(ii) Next we show that the chain recurrent set of $\mathcal{V}^{+}$ equals
$\mathcal{U}\times\{0\}$. Suppose that $(u,y)\in\mathcal{V}^{-}$ with
$y\not =0$ is chain recurrent and consider for $\varepsilon\in(0,1),\tau>0$ an
$(\varepsilon,\tau)$-chain from $(u,y)$ to $(u,y)$. Let $\tau>0$ such that
$\beta:=Ke^{\alpha\tau}>1$. Similarly as in (i) we compute%
\begin{align*}
\left\Vert y\right\Vert  &  =\left\Vert y_{q}\right\Vert \geq\left\Vert
y_{q}-\varphi(\tau_{q-1},u^{q-1},y_{q-1})+\varphi(\tau_{q-1},u^{q-1}%
,y_{q-1})\right\Vert \\
&  \geq\left\Vert \varphi(\tau_{q-1},u^{q-1},y_{q-1})\right\Vert -\left\Vert
y_{q}-\varphi(\tau_{q-1},u^{q-1},y_{q-1})\right\Vert \\
&  >\beta\left\Vert y_{q-1}\right\Vert -\varepsilon\geq\beta^{2}\left\Vert
y_{q-2}\right\Vert -\beta\varepsilon-\varepsilon\\
&  \geq\beta^{q}\left\Vert y\right\Vert -(\beta\varepsilon)^{q-2}%
-(\beta\varepsilon)^{q-3}-\cdots-\beta\varepsilon-\varepsilon.
\end{align*}
Let $\varepsilon>0$ be small enough such that $\beta\varepsilon<1$. Then it
follows that%
\[
\left\Vert y\right\Vert >\beta^{q}\left\Vert y\right\Vert -(1-\beta
\varepsilon)^{-1}+1-\varepsilon.
\]
For $\varepsilon>0$ small enough this contradicts $\beta>1$ and hence, for
$\varepsilon>0$ small enough and $\tau>0$ large enough, there is no
$(\varepsilon,\tau)$-chain from $(u,y)$ to $(u,y)$ if $y\not =0$.

(iii) Any $(\varepsilon,\tau)$-chain in $\mathcal{U}\times\mathbb{R}%
^{n}=\mathcal{V}^{+}\oplus\mathcal{V}^{-}$ projects to $(\varepsilon,\tau)$
chains in $\mathcal{V}^{+}$ and $\mathcal{V}^{-}$. Thus (i) and (ii) imply the assertion.
\end{proof}

The following result characterizes the chain recurrent set for hyperbolic
control semiflows.

\begin{theorem}
\label{Theorem_hyperbolic1}Consider the affine control semiflow $\Phi$ on
$\mathcal{U}\times M_{2}$ defined in (\ref{phi}) associated with the delay
system (\ref{D1}). Suppose that $\det A_{p}\not =0$ and that the linear part
$\Phi^{0}$ of $\Phi$ is hyperbolic. Then, for the linear semiflow $\Phi^{0}$,
the entire chain recurrent set is $\mathcal{R}^{\#}(\Phi^{0})=\mathcal{U}%
\times\{0_{M_{2}}\}$ and, for the affine semiflow $\Phi$%
\[
\mathcal{R}^{\#}(\Phi)=H(\mathcal{R}^{\#}(\Phi^{0}))=\{(u,-e(u,0))\left\vert
u\in\mathcal{U}\right.  \},
\]
where $H$ is the homeomorphism defined in (\ref{H}) and $e(u,t)\in M_{2}%
,t\in\mathbb{R}$ is the unique entire bounded solution of (\ref{hom_delay}).
The entire chain recurrent set $\mathcal{R}^{\#}(\Phi)$ is compact, invariant,
and chain transitive.
\end{theorem}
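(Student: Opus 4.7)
The plan is to reduce everything to the linear semiflow $\Phi^{0}$ via the conjugacy $H$ from Proposition \ref{Proposition7.3}(ii), and then invoke Lemma \ref{Lemma_trivial}.

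First I would handle the linear case. Lemma \ref{Lemma_trivial} already gives that the chain recurrent set $\mathcal{R}(\Phi^{0})$ equals $\mathcal{U}\times\{0_{M_{2}}\}$. To upgrade this to the entire chain recurrent set $\mathcal{R}^{\#}(\Phi^{0})$, I must exhibit, for each $(u,0)\in\mathcal{U}\times\{0_{M_{2}}\}$, an entire solution lying in $\mathcal{R}(\Phi^{0})$; the orbit $t\mapsto(\theta_{t}u,0)$ does the job since $T(t)0=0$ and $\theta$ is a flow on $\mathcal{U}$. Hence $\mathcal{R}^{\#}(\Phi^{0})=\mathcal{U}\times\{0_{M_{2}}\}$.

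Next I would transfer this to $\Phi$ via $H$. By Proposition \ref{Proposition_injective2} together with $\det A_{p}\neq 0$, the maps $\Phi_{t}$ and $\Phi_{t}^{0}$ are injective for every $t\geq 0$, so the notion of entire chain recurrent set is well defined for both semiflows. Proposition \ref{Proposition7.3}(ii) gives that $H(u,y)=(u,y-e(u,0))$ is a homeomorphism conjugating $\Phi$ with $\Phi^{0}$. Because $\mathcal{U}$ is compact and $e:\mathcal{U}\times\mathbb{R}\to M_{2}$ is continuous, the set $\{e(u,0):u\in\mathcal{U}\}$ is compact and the map $u\mapsto e(u,0)$ is uniformly continuous, which makes both $H$ and $H^{-1}$ uniformly continuous on $\mathcal{U}\times M_{2}$. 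Uniform continuity of a conjugacy preserves $(\varepsilon,\tau)$-chains (up to a modulus of continuity), entire solutions, and the property of an entire orbit lying in a given set. Consequently
\[
\mathcal{R}^{\#}(\Phi)=H(\mathcal{R}^{\#}(\Phi^{0}))=H(\mathcal{U}\times\{0_{M_{2}}\})=\{(u,-e(u,0))\,|\,u\in\mathcal{U}\}.
\]

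Finally I would establish the three additional properties. Compactness is immediate: $\mathcal{U}\times\{0_{M_{2}}\}$ is compact as $\mathcal{U}$ is compact, and $H$ is a homeomorphism. Invariance of $\mathcal{R}^{\#}(\Phi^{0})$ under $\Phi^{0}$ (in both positive and negative time, via the unique pre-images under injective $T(t)$) is clear from $\Phi_{t}^{0}(u,0)=(\theta_{t}u,0)$; the conjugacy then yields invariance of $\mathcal{R}^{\#}(\Phi)$. For chain transitivity it suffices to show it for $\mathcal{U}\times\{0_{M_{2}}\}$ under $\Phi^{0}$: given two points $(u,0),(v,0)$ and $\varepsilon,\tau>0$, chain transitivity of the shift $\theta$ on $\mathcal{U}$ (cited in Section \ref{Section4} from Kawan) supplies a $(\varepsilon,\tau)$-chain $u=u_{0},u_{1},\ldots,u_{q}=v$ in $\mathcal{U}$, and the lifted sequence $(u_{j},0)$ with the same times is an $(\varepsilon,\tau)$-chain in $\mathcal{U}\times\{0_{M_{2}}\}$ for $\Phi^{0}$ since the $M_{2}$-component is identically zero. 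Uniform continuity of $H$ transports this chain transitivity to $\mathcal{R}^{\#}(\Phi)$.

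The only delicate point is the conjugacy argument in the second paragraph: one must verify that $H$ and $H^{-1}$ really are uniformly continuous (so that chains map to chains with a controlled blow-up of $\varepsilon$) and that the property of defining an entire solution lying inside a set is preserved; both reduce to compactness of $\mathcal{U}$ combined with continuity of $e$ from Proposition \ref{Proposition7.3}(i).
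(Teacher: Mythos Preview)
Your proposal is correct and follows essentially the same route as the paper: invoke Lemma \ref{Lemma_trivial} for $\Phi^{0}$, establish uniform continuity of $H$ and $H^{-1}$ from compactness of $\mathcal{U}$ and continuity of $e(\cdot,0)$, and then transport chain recurrence, invariance, and chain transitivity across the conjugacy of Proposition \ref{Proposition7.3}. The only minor differences are expository: you make the passage from $\mathcal{R}(\Phi^{0})$ to $\mathcal{R}^{\#}(\Phi^{0})$ explicit via the entire orbit $t\mapsto(\theta_t u,0)$, while the paper writes out the $\delta$--$\varepsilon$ verification of uniform continuity in more detail.
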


\begin{proof}
Lemma \ref{Lemma_trivial} shows that the entire chain recurrent set of
$\Phi^{0}$ is $\mathcal{R}^{\#}(\Phi^{0})=\mathcal{U}\times\{0_{M_{2}}\}$. By
Proposition \ref{Proposition7.3} the set%
\[
H(\mathcal{R}^{\#}(\Phi^{0}))=H(\mathcal{U}\times\{0_{M_{2}}%
)=\{(u,-e(u,0))\left\vert u\in\mathcal{U}\right.  \}.
\]
is compact using that $\mathcal{U}$ is compact and $e(\cdot,0)$ is continuous.

The set $\{(u,-e(u,0))\left\vert u\in\mathcal{U}\right.  \}$ is invariant
since, by Proposition \ref{Proposition7.3}(i),%
\[
\Phi_{t}(u,-e(u,0))=(\theta_{t}u,-e(u,t))=(\theta_{t}u,-e(\theta_{t}%
u,0)),t\in\mathbb{R}.
\]
The map $H$ is uniformly continuous: In fact, for $\varepsilon>0$ it follows
by compactness of $\mathcal{U}$ and continuity of $e(\cdot,0)$ that there is
$\delta(\varepsilon)\in(0,\varepsilon/2)$ such that $d(u,u^{\prime}%
)<\delta(\varepsilon)$ and $\left\Vert y-y^{\prime}\right\Vert <\delta
(\varepsilon)$ implies
\[
\left\Vert y-e(u,0)-\left(  y^{\prime}-e(u^{\prime},0)\right)  \right\Vert
\leq\left\Vert y-y^{\prime}\right\Vert +\left\Vert e(u,0)-e(u^{\prime
},0)\right\Vert <\delta(\varepsilon)+\varepsilon/2<\varepsilon.
\]
Hence $d(u,y),(u^{\prime},y^{\prime}))<\delta(\varepsilon)$ implies
$d(H(u,y),H(u^{\prime},y^{\prime}))<\varepsilon$. Analogously one proves that
the inverse of $H$ given by $H^{-1}(u,y)=(u,y+e(u,0))$ is uniformly continuous.

Let $\varepsilon,\tau>0$ and consider $H(u,0),H(u^{\prime},0)\in
H(\mathcal{R}^{\#}(\Phi^{0}))$ with $u,u^{\prime}\in\mathcal{U}$. By chain
transitivity of $\mathcal{U}$ there is a $(\delta(\varepsilon),\tau)$-chain in
$\mathcal{U}\times\{0_{M_{2}}\}$ from $(u,0)$ to $(u^{\prime},0)$. Then $H$
maps this chain onto an $(\varepsilon,\tau)$-chain from $H(u,0)$ to
$H(u^{\prime},0)$. Since $\varepsilon,\tau>0$ are arbitrary, this proves that
$H(\mathcal{R}^{\#}(\Phi^{0}))$ is chain transitive and certainly this set is
invariant and consists of points defining entire solutions.

It remains to prove that $H(\mathcal{R}^{\#}(\Phi^{0}))$ is the entire chain
recurrent set of $\Phi$. Let $\varepsilon>0$. By uniform continuity of
$H^{-1}$ there is $\delta^{\prime}(\varepsilon)>0$ such that
$d(u,y),(u^{\prime},y^{\prime}))<\delta^{\prime}(\varepsilon)$ implies
$d(H^{-1}(u,y),H^{-1}(u^{\prime},y^{\prime}))<\varepsilon$. For any chain
recurrent point $(u,y)$ of $\Phi$ and $\tau>0$ there is a $(\delta^{\prime
}(\varepsilon),\tau)$-chain from $(u,y)$ to $(u,y)$. This is mapped by
$H^{-1}$ to an $(\varepsilon,\tau)$-chain of $\Phi$ from $H^{-1}(u,y)$ to
$H^{-1}(u,y)$. This proves that $H^{-1}(u,y)\in\mathcal{R}^{\#}(\Phi^{0})$ and
hence $(u,y)=H(H^{-1}(u,y))\in H(\mathcal{R}^{\#}(\Phi^{0}))$.
\end{proof}

Next we use the linear lift to describe the image of the entire chain
recurrent set.

\begin{theorem}
\label{Theorem_lift}Consider the delay control system (\ref{D1}) and suppose
that $\det A_{p}\not =0$. Assume, for the associated affine control semiflow
$\Phi$ on $\mathcal{U}\times M_{2}$ defined in (\ref{phi}), that the linear
part $\Phi^{0}$ is hyperbolic.

(i) Then the lift $\Phi^{1}$ on $\mathcal{U}\times M_{2}^{1}$ defined in
(\ref{lift1}) possesses an invariant one dimensional subbundle $\mathcal{V}%
_{c}^{1}$ of $\mathcal{U}\times M_{2}^{1}$ defined by%
\begin{equation}
\mathcal{V}_{c}^{1}=\{(u,-re(u,0),r)\in\mathcal{U}\times M_{2}\times
\mathbb{R}\left\vert u\in\mathcal{U},r\in\mathbb{R}\right.  \}. \label{Vstar2}%
\end{equation}

(ii) The projection $\mathcal{M}_{c}^{1}=\mathbb{P}\mathcal{V}_{c}^{1}$ to
$\mathcal{U}\times\mathbb{P}M_{2}^{1}$ is a compact subset of $\mathcal{U}%
\times\mathbb{P}M_{2}^{1,1}$ and coincides with the image of the entire chain
recurrent set of $\Phi$, i.e.,
\begin{equation}
\mathcal{M}_{c}^{1}=\left\{  (u,\mathbb{P}(x,1))\in\mathcal{U}\times
\mathbb{P}M_{2}^{1}\left\vert (u,x)\in\mathcal{R}^{\#}(\Phi)\right.  \right\}
. \label{E_hyp}%
\end{equation}

\end{theorem}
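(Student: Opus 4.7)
The plan is to exploit the explicit description of $\mathcal{R}^{\#}(\Phi)$ from Theorem~\ref{Theorem_hyperbolic1} together with the conjugacy $h^{1}$ from Proposition~\ref{Proposition_e2}(i). Since $\mathcal{R}^{\#}(\Phi)=\{(u,-e(u,0))\mid u\in\mathcal{U}\}$ and $h^{1}(u,y)=(u,\mathbb{P}(y,1))$, the image $h^{1}(\mathcal{R}^{\#}(\Phi))$ sits in $\mathcal{U}\times\mathbb{P}M_{2}^{1,1}$ at the projective points $\mathbb{P}(-e(u,0),1)$. I expect $\mathcal{V}_{c}^{1}$ to be precisely the union of the corresponding lines in $M_{2}^{1}$, and $\mathcal{M}_{c}^{1}=\mathbb{P}\mathcal{V}_{c}^{1}$ to coincide with $h^{1}(\mathcal{R}^{\#}(\Phi))$.

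For part~(i), continuity of $u\mapsto e(u,0)$ from Proposition~\ref{Proposition7.3}(i) and the fact that $(-e(u,0),1)\neq 0$ yield a continuous nowhere-vanishing section $u\mapsto(u,-e(u,0),1)$ of $\mathcal{U}\times M_{2}^{1}$ whose fiberwise $\mathbb{R}$-span is $\mathcal{V}_{c}^{1}$; this exhibits $\mathcal{V}_{c}^{1}$ as a one-dimensional subbundle. For invariance under $\Phi^{1}$, fiberwise linearity of $\Phi^{1}_{t}$ reduces the check to the single section: it suffices to show $\Phi^{1}_{t}(u,-e(u,0),1)=(\theta_{t}u,-e(\theta_{t}u,0),1)$. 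Formula~(\ref{lift1}) at $\gamma=1$ gives $\varphi^{1}(t,y,1,u)=(\varphi(t,y,u),1)$, so the second component reduces to $\varphi(t,-e(u,0),u)$; invariance of $\mathcal{R}^{\#}(\Phi)$ under $\Phi$ (Theorem~\ref{Theorem_hyperbolic1}) forces $\varphi(t,-e(u,0),u)=-e(\theta_{t}u,0)$. Fiberwise injectivity of $\Phi^{1}_{t}$, guaranteed by Proposition~\ref{Proposition_injective2}, combined with one-dimensionality of the fibers promotes forward invariance to full invariance.

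For part~(ii), every nonzero element of the fiber $\mathbb{R}\cdot(-e(u,0),1)$ over $u$ projects to the single point $\mathbb{P}(-e(u,0),1)$, so $\mathcal{M}_{c}^{1}=\{(u,\mathbb{P}(-e(u,0),1))\mid u\in\mathcal{U}\}$. Because the last coordinate of the chosen representative is $1\neq 0$, we have $\mathcal{M}_{c}^{1}\subset\mathcal{U}\times\mathbb{P}M_{2}^{1,1}$, and compactness follows from compactness of $\mathcal{U}$ together with continuity of $u\mapsto\mathbb{P}(-e(u,0),1)$. Substituting $\mathcal{R}^{\#}(\Phi)=\{(u,-e(u,0))\}$ into the right side of~(\ref{E_hyp}) gives $\{(u,\mathbb{P}(-e(u,0),1))\mid u\in\mathcal{U}\}=\mathcal{M}_{c}^{1}$, completing the identification.

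The main point requiring care is the invariance step in~(i): one must carefully distinguish the affine action of $\Phi$ on $M_{2}$ from the linear action of $\Phi^{1}$ on $M_{2}\times\mathbb{R}$. The algebraic pivot is that setting the auxiliary coordinate to $\gamma=1$ collapses the lifted linear equation back into the original affine one via $\varphi^{1}(t,y,1,u)=(\varphi(t,y,u),1)$; this is what translates the entire affine orbit through a chain recurrent point into a linear invariant line of $\Phi^{1}$, after which the rest of the proof is a bookkeeping exercise in the fiberwise linearity of $\Phi^{1}$.
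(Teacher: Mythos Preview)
Your proof is correct and follows essentially the same route as the paper's: both parts rest on the description $\mathcal{R}^{\#}(\Phi)=\{(u,-e(u,0))\mid u\in\mathcal{U}\}$ from Theorem~\ref{Theorem_hyperbolic1}, the relation $\varphi^{1}(t,y,1,u)=(\varphi(t,y,u),1)$ to check invariance of $\mathcal{V}_{c}^{1}$, and direct substitution to obtain~(\ref{E_hyp}) together with compactness. The only cosmetic differences are that you obtain the subbundle structure from a continuous nowhere-vanishing section rather than from closedness plus \cite[Lemma~3.8]{BluL19}, and you verify $\mathcal{M}_{c}^{1}\subset\mathcal{U}\times\mathbb{P}M_{2}^{1,1}$ directly from the representative $(-e(u,0),1)$ instead of invoking Proposition~\ref{Proposition_e2}(ii).
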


\begin{proof}
(i) Denote by $\mathcal{V}_{\ast}^{1}$ the right hand side of (\ref{Vstar2}).
For every $u\in\mathcal{U}$ the fiber $\left\{  (u,-re(u,0),r),r\in
\mathbb{R}\right\}  $, is one-dimensional and $\mathcal{V}_{\ast}^{1}$ is
closed. In fact, suppose that a sequence $(u^{k},-r_{k}e(u^{k},0),r_{k}%
),k\in\mathbb{N\,}$ in this set converges to $(u,x,r)\in\mathcal{U}\times
M_{2}\times\mathbb{R}$. Then $u^{k}\rightarrow u$ and $r_{k}\rightarrow r$
and, by continuity of $e(\cdot,0)$, it follows that $r_{k}e(u^{k}%
,0)\rightarrow re(u,0)$. This shows that $(u,x,r)=(u,-re(u,0),r)\in
\mathcal{V}_{\ast}^{1}$. According to Blumenthal and Latushkin \cite[Lemma
3.8]{BluL19} it follows that $\mathcal{V}_{c}^{1}=\mathcal{V}_{\ast}^{1}$ is a
one dimensional subbundle of $\mathcal{U}\times M_{2}\times\mathbb{R}$. This
subbundle is invariant, since by Proposition \ref{Proposition7.3}%
\[
\Phi_{t}^{1}(u,-re(u,0),r)=(\theta_{t}u,-re(u,t),r)=(\theta_{t}u,-re(\theta
_{t}u,0),r)\text{ for }t\in\mathbb{R}.
\]

(ii) The equality in (\ref{E_hyp}) follows from the definitions and Theorem
\ref{Theorem_hyperbolic1}. This also implies that the set $\mathcal{R}%
^{\#}(\Phi)$ is compact and hence the set $\{(u,-e(u,0),1)\in\mathcal{U}\times
M_{2}\times\mathbb{R}\left\vert u\in\mathcal{U}\right.  \}$ is also compact.
It follows that the right hand side of (\ref{E_hyp}) is compact. Finally.
Proposition \ref{Proposition_e2}(ii) shows that $\mathcal{M}_{c}^{1}%
\subset\mathcal{U}\times M_{2}^{1,1}$.
\end{proof}

This result implies the following consequences for chain control sets; cf.
Colonius and Santana \cite[Theorem 35]{ColS24} for the finite dimensional case.

\begin{corollary}
\label{Corollary_lift}Consider the delay system (\ref{D1}) and suppose that
$\det A_{p}\not =0$ and that the linear part $\Phi^{0}$ of the semiflow $\Phi$
is hyperbolic. Then the chain control set $E$ of system (\ref{D1}) is compact,
its lift $\mathcal{E}$ to $\mathcal{U}\times M_{2}$ coincides with the entire
chain recurrent set of the control semiflow $\Phi$, i.e., $\mathcal{E}%
=\mathcal{R}^{\#}(\Phi)$, and for every $u\in\mathcal{U}$ there is a unique
element $x\in E$ with $\psi(t,x,u)\in E$ for all $t\in\mathbb{R}$.
Furthermore, the image of $E$ in $\mathbb{P}M_{2}^{1}$ satisfies%
\[
\{\mathbb{P}(x,1)\left\vert x\in E\right.  \}=\{\mathbb{P}(x,1)\left\vert
\exists u\in\mathcal{U}:(u,\mathbb{P}(x,1))\in\mathcal{M}_{c}^{1}%
=\mathbb{P}\mathcal{V}_{c}^{1}\right.  \}.
\]

\end{corollary}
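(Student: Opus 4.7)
The plan is to deduce the corollary from Theorem \ref{Theorem_hyperbolic1} (which pins down $\mathcal{R}^\#(\Phi)$), Theorem \ref{Theorem_equivalence} (which translates between chain control sets in $M_2$ and maximal invariant chain transitive sets in $\mathcal{U}\times M_2$), and the uniqueness of the chain control set from Theorem \ref{Theorem_ccs1}. Essentially nothing new has to be built; every ingredient has already been assembled.

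First, I would observe that Theorem \ref{Theorem_hyperbolic1} already gives that $\mathcal{R}^\#(\Phi) = \{(u,-e(u,0)) : u\in\mathcal{U}\}$ is compact, invariant and chain transitive. Since any invariant chain transitive subset of $\Phi$ lies in $\mathcal{R}^\#(\Phi)$, this set is a maximal invariant chain transitive set. Theorem \ref{Theorem_equivalence}(iii) then produces a chain control set $\pi_{M_2}\mathcal{R}^\#(\Phi) = \{-e(u,0) : u\in\mathcal{U}\}$, and Theorem \ref{Theorem_ccs1} forces this to be \emph{the} chain control set $E$. Compactness of $E$ follows from compactness of $\mathcal{U}$ and continuity of $e(\cdot,0)$ established in Proposition \ref{Proposition7.3}(i).

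Next, I would show $\mathcal{E} = \mathcal{R}^\#(\Phi)$ by two inclusions. The inclusion $\mathcal{E}\subseteq\mathcal{R}^\#(\Phi)$ is already contained in Theorem \ref{Theorem_equivalence}(i). For the reverse, I would take $(u,-e(u,0))\in\mathcal{R}^\#(\Phi)$ and use the identity $e(\theta_t u,0)=e(u,t)$ from Proposition \ref{Proposition7.3}(i), together with the computation $\varphi(t,-e(u,0),u) = -e(u,t) = -e(\theta_t u,0)$ performed in the proof of Theorem \ref{Theorem_hyperbolic1}, to conclude $\varphi(t,-e(u,0),u)\in E$ for every $t\in\mathbb{R}$, i.e., $(u,-e(u,0))\in\mathcal{E}$. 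The unique-trajectory statement is then immediate: if $x\in E$ and $\psi(t,x,u)\in E$ for all $t\in\mathbb{R}$, then $(u,x)\in\mathcal{E} = \mathcal{R}^\#(\Phi)$, so $x = -e(u,0)$, and this element is unique by the uniqueness of bounded entire solutions in Proposition \ref{Proposition7.3}(i).

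Finally, I would read off the image description directly from the explicit form of $\mathcal{V}_c^1$ given in Theorem \ref{Theorem_lift}. The fiber of $\mathcal{V}_c^1$ over $u\in\mathcal{U}$ is $\{(u,-re(u,0),r) : r\in\mathbb{R}\}$, which for $r\neq 0$ projects to the single point $\mathbb{P}(-e(u,0),1)\in\mathbb{P}M_2^{1,1}$, so that $\mathcal{M}_c^1 = \{(u,\mathbb{P}(-e(u,0),1)) : u\in\mathcal{U}\}$. Combined with the description $E = \{-e(u,0) : u\in\mathcal{U}\}$ from the first step, this gives the asserted equality of subsets of $\mathbb{P}M_2^{1}$. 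The only step that requires any care is bookkeeping the direction of the conjugacy $H$ (so that the minus signs in Theorem \ref{Theorem_hyperbolic1} propagate consistently); everything else is a direct packaging of prior results.
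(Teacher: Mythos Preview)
Your proposal is correct and follows essentially the same route as the paper's own proof, which simply invokes Theorem~\ref{Theorem_hyperbolic1}, Theorem~\ref{Theorem_equivalence}(iii), Theorem~\ref{Theorem_ccs1}, and Theorem~\ref{Theorem_lift}(ii) in that order. You unpack more of the bookkeeping (the double inclusion for $\mathcal{E}=\mathcal{R}^\#(\Phi)$ and the explicit fiberwise description of $\mathcal{M}_c^1$), but the logical skeleton is the same.
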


\begin{proof}
By Theorem \ref{Theorem_hyperbolic1}, the chain recurrent set $\mathcal{R}%
(\Phi^{\#})$ is compact, invariant, and chain transitive. Hence Theorem
\ref{Theorem_equivalence}(iii) implies that it is the lift of a chain control
set, i.e., by Theorem \ref{Theorem_ccs1} it is the lift of the unique chain
control set $E$. Then the second assertion follows by Theorem
\ref{Theorem_lift}(ii).
\end{proof}

\bigskip

\end{document}